\documentclass[10pt, reqno]{amsart} 
\usepackage{mathrsfs,esvect}
\usepackage{extarrows}
\usepackage{tikz}
\usetikzlibrary{calc, decorations.pathreplacing, matrix, patterns, arrows.meta}
\usepackage{pgfplots}

\usepackage[margin=1in]{geometry}
\definecolor{deepblue}{RGB}{0,74,155}
\definecolor{crimson}{RGB}{178,34,52}
\usepackage[colorlinks=true, linkcolor=blue, citecolor=red, urlcolor=red, backref=page]{hyperref}

\usepackage{verbatim}
\usepackage{amssymb,amscd,amsfonts,amsbsy}
\usepackage{latexsym}
\usepackage{exscale}
\usepackage{amsmath,amsthm,amsfonts}
\usepackage{mathrsfs}
\usepackage{xcolor} 
\usepackage{esint} 
\usepackage{amssymb} 
\usepackage{stmaryrd}
\usepackage{cite} 
\usepackage{tikz}
\usepackage{enumerate}
\calclayout

\allowdisplaybreaks[3] 

\newtheorem{theorem}{Theorem}[section]
\newtheorem{proposition}[theorem]{Proposition}
\newtheorem{lemma}[theorem]{Lemma}

\newtheorem{definition}[theorem]{Definition}
\newtheorem{remark}[theorem]{Remark}

\makeatletter
\@addtoreset{equation}{section}
\makeatother

\def\Xint1{\mathchoice
	{\XXint\displaystyle\textstyle{1}}%
	{\XXint\textstyle\scriptstyle{1}}%
	{\XXint\scriptstyle\scriptscriptstyle{1}}%
	{\XXint\scriptscriptstyle\scriptscriptstyle{1}}%
	\!\int}
\def\XXint123{{\setbox0=\hbox{$1{23}{\int}$}
		\vcenter{\hbox{$23$}}\kern-.5\wd0}}

\DeclareMathOperator*{\esssup}{ess\,sup}

\newcommand{\aver}[1]{-\hskip-0.46cm\int_{1}}
\newcommand{\textaver}[1]{-\hskip-0.40cm\int_{1}}

\def\supp{\operatorname{supp}}

\def\avint_#1{\mathchoice{\mathop{\kern 0.2em\vrule width 0.6em height 0.69678ex depth -0.58065ex \kern -0.8em \intop}\nolimits_{\kern -0.4em#1}}{\mathop{\kern 0.1em\vrule width 0.5em height 0.69678ex depth -0.60387ex \kern -0.6em \intop}\nolimits_{#1}} {\mathop{\kern 0.1em\vrule width 0.5em height 0.69678ex depth -0.60387ex \kern -0.6em \intop}\nolimits_{#1}} {\mathop{\kern 0.1em\vrule width 0.5em height 0.69678ex depth -0.60387ex \kern -0.6em \intop}\nolimits_{#1}}}

\allowdisplaybreaks

\begin{document}
	\title[Commutators of a class of \(\Psi\)-bounded oscillation operators ...]
	{\bf Commutators of a class of \(\Psi\)-bounded oscillation operators on abstract weighted Orlicz-Morrey spaces endowed with ball-basis and applications}
	

	\author[F. Shan]{Fate Shan}
	\address{Fate Shan:\\
			School of Mathematics and System Sciences\\
			Xinjiang University\\
			Urumqi 830046 \\
			People's Repunlic of China}\email{107552400558@stu.xju.edu.cn}

	\author[J. Zhou]{Jiang Zhou*} 
	\address{Jiang Zho:\\
		School of Mathematics and System Sciences\\
		Xinjiang University\\
		Urumqi 830046 \\
		People's Repunlic of China}\email{zhoujiang@xju.edu.cn.}

	\date{\today}

	\subjclass[2020]{42B20, 42B25, 42B35, 46E30.}
	
	\keywords{~abstract Orlicz-Morrey spaces, \(\Psi\)-bounded oscillation operators, maximal operators, sparse operators, norm estimation.} 
	\thanks{The first named author was supported by the Research Innovation Program for Postgraduates of Xinjiang Uygur Autonomous Region (No. XJ2026G070). The second author gratefully acknowledges support from the National Natural Science Foundation of China (No.12461021).\\
	$^{*}$ Corresponding author, Email: zhoujiang@xju.edu.cn. }

	\begin{abstract}
		In our previous work \cite{SZ2026}, we established a class of abstract Orlicz-Morrey spaces endowed with a ball-basis and introduced the notion of \(\Psi\)-bounded oscillation operators. The present paper further develops this theory by investigating sparse domination and weighted estimates for such operators and their commutators within the framework of abstract Orlicz-Morrey spaces endowed with a ball-basis. We obtain pointwise sparse domination results for the operators and establish their boundedness on weighted Orlicz-Morrey spaces. The hypotheses of these conclusions differ from those in the classical Orlicz-Morrey setting and do not rely on dyadic decompositions of Euclidean structure. Moreover, we study the duality of these spaces and prove that the dual of the associated block space is exactly the Orlicz-Morrey space under consideration.
	\end{abstract}

	\maketitle
	\tableofcontents

	\section{Introduction}\label{Introduction}\label{Section1}
	
	\subsection{Some reflections from measure spaces to abstract spaces endowed with a ball-basis}\label{Section1.1}~
	
	
	The study of operators acting on classical function spaces remains an active field of investigation. Scholars often adopt two main approaches in this context: one is the boundedness of operators, and the other is the compactness of operators. The boundedness can be further divided into diagonal-type estimates and off-diagonal-type estimates. In our previous work \cite{SZ2026}, we further investigated the estimates for \(\Psi\)-BOOs (Definition \ref{def-BOO}) and their commutators on abstract Orlicz-Morrey spaces endowed with a ball-basis, and also discussed the dual spaces of such spaces. Below, we elaborate on these topics step by step in line with the content of this paper.

	{\bf (1) Classical theory of Orlicz-Morrey spaces}
	
	
	Based on our review of the classical Orlicz-Morrey spaces, it is now known that there exist at least five types of such spaces defined on the underlying space \((\mathbb{R}^n, \mathscr{L}^n)\) (where \(\mathscr{L}^n\) denotes the standard Lebesgue measure on \(\mathbb{R}^n\)): the KK-type, Nakia-type, SST-type, DGS-type, and Ho-type (see Appendix \ref{Appendix5} for their explicit forms and related results). Below we describe the background of the Orlicz-Morrey spaces employed in our work \cite{SZ2026}.


	The series of works by Nakai \cite{OM-Nakai-2004, OM-Nakai-2006, OM-Nakai2008, OM-Nakai-2011} laid the foundation for the development of Nakia-type Orlicz-Morrey spaces. The Orlicz-Morrey spaces defined in \cite{OM-Nakai-2004} (see \eqref{Nakai2004}) serve as a common extension covering \(L^p(\mathbb{R}^n,\mathscr{L}^n)\), Morrey, and Orlicz spaces. In that same work, the author showed that the Hardy-Littlewood maximal operator is bounded on these spaces. A subsequent paper \cite{OM-Nakai-2006} further proved the boundedness of Calder\'on-Zygmund operators and also derived modular inequalities for Orlicz spaces. In \cite{OM-Nakai2008, OM-Nakai-2011}, Nakai further verified that these spaces contain \(L^p (\mathbb{R}^n, \mathscr{L}^n)\), \(L^\infty (\mathbb{R}^n, \mathscr{L}^n)\), (generalized) Morrey, Orlicz, and other classical spaces, defined weak Orlicz-Morrey spaces, proved H\"older’s inequality, and discussed the dual spaces and inclusion relations among different Orlicz-Morrey spaces. The question of when the commutators \([b,T]\) and \([b,I_{\rho}]\) are bounded on Orlicz-Morrey spaces was settled by Shi et al. \cite{OM-Nakai-2021-1}, under the assumptions that \(T\) is a Calder\'on-Zygmund operator, \(I_{\rho}\) a generalized fractional integral operator, and \(b\) belongs to the Campanato class. Later, Yamaguchi et al. \cite{OM-Nakai-2022} studied the compactness properties of these same commutators. For further related results, we refer the reader to \cite{OM-Nakai-2019, OM-Nakai-2023, OM-Nakai-2021-1, OM-Nakai-2021-2, OM-Nakai-2024}.

	
	It is a classical fact that Orlicz spaces and Morrey spaces were originally devised as generalizations of Lebesgue spaces. The former were introduced by Morrey \cite{Morrey1938} in the context of second-order elliptic PDEs, where he observed that many qualitative properties of their solutions can be reformulated in terms of operator boundedness on Morrey spaces. These spaces have also proved useful in describing the exact behaviour of Riesz potentials \cite{Adams1975} and fractional integral operators \cite{Olsen1995}. In contrast, the Hardy-Littlewood maximal operator, though bounded on every \(L^p(\mathbb{R}^n,\mathscr{L}^n)\) with \(1 < p < \infty\), fails to be bounded on \(L^1 (\mathbb{R}^n, \mathscr{L}^n)\); Orlicz spaces offer a natural environment for studying its boundedness near the critical endpoint \(p = 1\) \cite{Kita1996, Cianchi1999}. By merging Orlicz and Morrey spaces into a single framework, Orlicz-Morrey spaces are of broader theoretical relevance, and both constituent classes have been the subject of extensive research \cite{Morrey1964, Wmq2024, Peetre1966, Peetre1969, Nakai2000, OM-Nakai2008, OM-SST-2012, OM-DGS-2014, Xue2025, MorreyBook1, MorreyBook2}. Consequently, the development of a solid abstract theory for Orlicz-Morrey spaces is expected to substantially widen the applicability of the classical theory.

	{\bf (2) Sparse control techniques}
	
	
	In harmonic analysis, operators can be roughly divided into Calder\'on-Zygmund operators and those lying beyond the Calder\'on-Zygmund theory, depending on whether they satisfy the classical Calder\'on-Zygmund kernel conditions and possess weak type \((1, 1)\) boundedness as well as \(L^p (\mathbb{R}^n, \mathscr{L}^n)\) boundedness. Examples of the latter include singular integrals with rough kernels, oscillatory singular integrals, Fourier integral operators, multilinear singular integrals, and operators on non-doubling measure spaces. For such operators, pointwise control and norm estimates are commonly employed to describe their behavior, with the fundamental reason being the crucial role played by their oscillatory nature.


	The theory of sparse control originates from the seminal work of Lerner. In \cite{Lerner2010}, Lerner introduced the technique of local mean oscillation, and in \cite{Lerner2013, Lerner2016}, he defined sparse collections and sparse operators. Subsequently, Karagulyan introduced two pairwise disjoint \(\beta\)-sparse families (cf. Definition \ref{sparse}) on a measure space \((X, \mathfrak{M}, \mu)\) endowed with a ball-basis, and constructed the sparse averaging operator \(\mathcal{A}_{\mathcal{S},r}\) (where \(\mathcal{S}\) is the union of the two sparse families). He further established the following sparse domination for the bounded oscillation operator \(T_K\):
	\begin{align}\label{K-sparse}
		\left| T_K f(x) \right| \lesssim \left( \mathcal{L}_1 (T_K) + \mathcal{L}_2 (T_K) +\|T_K\|_{L^r \to L^{r, \infty}} \right) \cdot \mathcal{A}_{\mathcal{S}, r} f(x) \quad a.e. \; x \in B.
	\end{align}
	We refer to the estimate above as the {\tt K-pattern sparse control}; see \cite[Theorem 1.1]{Abstract2019} for details.
	It is noteworthy that Lerner also pointed out in \cite[Theorem 4.2]{Lerner2016} that both the sparse domination of an operator \(T\) and the estimate \eqref{K-sparse} imply that an operator must satisfy a certain weak boundedness condition in order to be pointwise dominated by a sparse operator. Following this direction, Lerner introduced in \cite{Lerner2020} the oscillation operator
	\begin{align*}
		\mathcal{M}_{T,\alpha}^{\#}f(x) := \sup_{Q \ni x} \; \esssup_{x',x'' \in Q} \left| T(f\mathbb{I}_{\mathbb{R}^n \setminus \alpha Q})(x') - T(f\mathbb{I}_{\mathbb{R}^n \setminus \alpha Q})(x'') \right|,
	\end{align*}
	for a sublinear operator \(T\), as well as the \(W_q\) condition for a weak type \((q, q)\) operator \(T\):
	\begin{align*}
		\left| \left\{ x\in Q: |T(f \, \mathbb{I}_Q)(x)| > \psi_{T,q} (\lambda) \left( \fint_Q |f|^q \right)^q \right\} \right| \leq \lambda|Q|, \, 0 < \lambda < 1.
	\end{align*}
	Lerner established that the \(W_q\) condition together with a weak boundedness property of \(\mathcal{M}_{T, \alpha}^{\#}\) provides nearly minimal assumptions under which an operator can be pointwise controlled by a sparse operator.
	


	Not only classical Calder\'on-Zygmund operators, but also many important operators beyond the Calder\'on-Zygmund class, can be dominated by sparse operators--either pointwise or in norm--as shown by the development of sparse control theory. The essential reason for this unifying phenomenon lies precisely in the establishment and deepening of the sparse control theory. Sharp weighted estimates, including optimal dependence on the \(A_p\) characteristic, are known for maximal and sparse operators. Hence, whenever an operator is sparse-dominated, its weighted norm bounds can be derived from the sparse ones, thereby giving sharp--and in some cases optimal--constants. This provides a direct and powerful tool for quantitative weighted estimates of various operators in harmonic analysis.


	Building on this foundation, the theory of sparse control has seen further advances. Nearly minimal assumptions for pointwise sparse domination of oscillation operators were identified by Lerner in \cite[Theorem 1.1]{Lerner2020}. Cao, in \cite[Theorem 1.5]{Caomingming2023}, refined the parameter \(\beta\) in \eqref{K-sparse} to the exact value \(\beta = \left( 2\mathscr{C}_0^3 \right)^{-1}\). The Anh Bui \cite{sparse2026} employed sparse control techniques to study weighted estimates for a class of fractional operators lying beyond Calder\'on-Zygmund theory. In \cite[Theorem 1]{Shan2026}, we obtained an application of \eqref{K-sparse} to abstract Morrey spaces endowed with a ball-basis. Furthermore, sparse control has also led to important progress in the study of commutators, Bloom weighted estimates, and other topics \cite{sparse2021, Lerner2024, XQYsparse2025, CYPsparse2025}. In modern analysis, these approaches now form a foundational component of quantitative estimation theory, and they keep advancing research into boundedness of operators across diverse function spaces \cite{LKW2018, Lerner2024, sparse2026, OM-sparse2025, CYPsparse2025, XQYsparse2025, Lerner2019, Xue2025}.

	(3) {\bf Abstract function space theory endowed with a ball-basis}
	
	
	A major factor behind the fast growth of dyadic analysis is Hyt\"onen’s refinement of the random dyadic system approach; this refinement led directly to the resolution of the \(A_2\) conjecture in the general Calder\'on-Zygmund setting \cite{Hytonen2012}. Subsequently, Lerner’s sparse control theorem established that every dyadic shift can be pointwise controlled by a sparse operator, thereby reducing complex random averages to clean weighted estimates on sparse families. Lacey \cite{Lacey2017} further streamlined this approach: he showed that any \(\theta\)-Calder\'on-Zygmund operator satisfying the Dini condition, as well as martingale transforms, admits pointwise sparse control, bypassing the random averages and parameter summations inherent in dyadic shift representations, and relying only on weak type \((1, 1)\) estimates and recursive covering to derive the optimal \(A_2\) bound. Following this line, Di Plinio et al. \cite{Di Plinio2014} obtained sharp weighted estimates for the Carleson operator.


	However, the aforementioned works often require different technical tools for different operators, lacking a unified framework. To address this shortcoming, Karagulyan incorporated these operators into a comprehensive abstract theory: he introduced the notion of a ball-basis on a measure space \((X, \mathfrak{M}, \mu)\), which unifies classical metric balls and dyadic cubes, and defined bounded oscillation operators. This framework encompasses the Hardy-Littlewood maximal operator on \((\mathbb{R}^n, \mathscr{L}^n)\) \cite{Muckenhoupt1972, Buckley1993}, Calder\'on-Zygmund operators (including \(\theta\)-Calder\'on-Zygmund operators) on \((\mathbb{R}^n, \mathscr{L}^n)\) and on spaces of homogeneous type \cite{Coifmany1974, Dragicevic2005, Hytonen2012, Lacey2017, H-space}, martingale transforms \cite{Izumisawa1977, Lacey2017, Thiele2015}, as well as Carleson-type maximally modulated operators on \((\mathbb{R}^n, \mathscr{L}^n)\) \cite{Grafakos2005, Di Plinio2014}. Karagulyan proved that bounded oscillation operators can be pointwise dominated by sparse operators, and consequently established the following weighted estimate:
	\begin{align}\label{K-norm}
		\|T\|_{L^p(\omega) \to L^p(\omega)} \leq c(p,\mathscr{C}_0) \left( \mathcal{L}_1(T) + \mathcal{L}_2(T) +\|T\|_{L^1 \to L^{1,\infty}}\right) [\omega]_{A_p}^{\max\left\{\frac{p+2}{p(p-1)},\frac{3p-2}{p}\right\}},
	\end{align}
	where \(\mathscr{C}_0\) is the constant appearing in condition \eqref{list:BB4}. We refer to the above estimate as the {\tt K-pattern weighted estimate}, and call a space possessing this property an {\tt abstract function space endowed with a ball-basis}.


	The impact of this groundbreaking work was substantial, prompting a wave of research into abstract function spaces with a ball-basis. Cao \cite{Caomingming2023} later improved the ball-basis definition given in \cite{Abstract2019} and introduced a new category of Banach-valued multilinear bounded oscillation operators on abstract Lebesgue spaces. According to his results, this category covers both multilinear Calder\'on-Zygmund-type operators, including the multilinear Hardy-Littlewood maximal operator and multilinear \(\theta\)-Calder\'on-Zygmund operators, and those beyond the multilinear Calder\'on-Zygmund framework, such as multilinear Littlewood-Paley square functions and multilinear Carleson-type operators. For these operators, he obtained both K-pattern sparse control and K-pattern weighted estimates for the multilinear bounded oscillation operators and their commutators. Karagulyan \cite{Abstract2023} extended the previously developed theory of bounded oscillation operators to vector-valued function spaces, introduced more general parameters, thereby incorporating a larger family of classical operators into a unified framework, and proved that such operators admit sparse control. For further results within this framework, see \cite{Cenxi}.
	
	
	Furthermore, the universality of this abstract framework has enabled its rapid extension to various concrete function spaces. For instance, abstract variable-exponent Lebesgue spaces endowed with a ball-basis \cite{Wangsb}, abstract generalized Orlicz spaces endowed with a ball-basis \cite{Zhou2025}, abstract Morrey spaces endowed with a ball-basis \cite{Shan2026} and abstract weighted Morrey spaces endowed with a ball-basis \cite{Liyc}, as well as abstract BMO \cite{Abstract2026-BMO-BLO, Abstract2026-BMO} and BLO spaces \cite{Abstract2026-BMO-BLO} endowed with a ball-basis have all been incorporated into this theoretical framework, with corresponding sparse control and weighted estimates established. These works further demonstrate the strong adaptability of the ball-basis framework when handling singular integral operators under different structural settings.


	In summary, the theory of bounded oscillation operators and abstract function spaces endowed with a ball-basis, as proposed by Karagulyan, provides a unified processing framework for Calder\'on-Zygmund operators, maximal functions, martingale transforms, and Carleson-type operators in harmonic analysis. This framework has been extended to vector-valued, multilinear, and various function spaces with variable exponents. It has successfully yielded sparse control and weighted estimates for the aforementioned operators, encompassing the major developments from dyadic analysis to the resolution of the \(A_2\) conjecture.

	\subsection{Main results and related contributions}\label{Section1.2}~~


	In this paper, we obtain the following principal results. The first one provides a pointwise domination of \(\Psi\)-BOOs by means of sparse operators.

	\begin{theorem}\label{main}
		Let \((X, \mathfrak{M}, \mu)\) be a measure space endowed with a ball-basis \(\mathfrak{B}\), let \(b\) be a measurable function, and suppose \(\lambda > 3 \mathscr{C}_0^6\). Assume that \((\Phi, \Psi, \phi, \psi) \in \mathscr{Y} \otimes \mathscr{G}\) and \((\Phi, \phi) \in E_{mb}\). Let \(T\) be a \(\mathbb{B}\)-valued \(\Psi\)-BOO satisfying \(T \in \mathbb{W}_{\Psi, \psi, \lambda}\), and let \(\mathscr{T}\) be a \(\mathbb{B}\)-valued linear operator such that \(Tf(x) = \|\mathscr{T}f(x)\|_\mathbb{B}\). Then there exist two families \(\mathcal{S}_1, \, \mathcal{S}_2 \subset \mathfrak{B}\), each of which is \(\frac{1}{2 \mathscr{C}_0^3}\)-sparse, such that for every \(f \in L^{(\Psi,\psi)}(X) \subset L^{(\Phi,\phi)}(X)\) and every \(B \in \mathfrak{B}\),
		\begin{align}\label{ine main1}
			\|\mathscr{T} f(x)\|_\mathbb{B} \lesssim \mathscr{C} (T) \left[ \mathcal{A}_{\mathcal{S}_1, \Phi,\phi} f(x) + \mathcal{A}_{\mathcal{S}_2, \Phi,\phi} f(x) \right], \quad a.e. \; x \in B,
		\end{align}
		and
		\begin{align}\label{ine main2}
			\|[\mathscr{T}, b] f(x)\|_\mathbb{B} \lesssim \mathscr{C} (T) \left[ \mathcal{A}^b_{\mathcal{S}_1, \Phi,\phi} f(x) + \mathcal{A}^b_{\mathcal{S}_2, \Phi,\phi} f(x) \right], \quad a.e. \; x \in B,
		\end{align}
		where \(\mathscr{C} (T) := \mathscr{C}_1 (T) + \mathscr{C}_2 (T) + \|T\|_{L^{(\Psi, \psi)}(X) \to wL^{(\Psi, \psi)}(X)}\). Here \(\mathscr{C}_0\) is the constant from Definition \ref{def-ball-basis}, while \(\mathscr{C}_1 (T)\) and \(\mathscr{C}_2 (T)\) are the constants appearing in Definition \ref{def-BOO}.
	\end{theorem}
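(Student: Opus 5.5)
We follow Karagulyan's stopping-time construction from \cite{Abstract2019}, in the refined form of Cao \cite{Caomingming2023}, with the $L^r$ averages replaced by Luxemburg-type local averages $\|f\|_{\Phi,\phi,B}$. The first step is a local estimate. Fix $B\in\mathfrak{B}$ and $f\in L^{(\Psi,\psi)}(X)$. We consider the exceptional set
\begin{align*}
E=\bigl\{x\in B: T(f\mathbb{I}_{B^*})(x)>\lambda' \mathscr{C}(T)\|f\|_{\Phi,\phi,B^*}\bigr\}\cup\bigl\{x\in B: \mathcal{M}_{\Phi,\phi}(f\mathbb{I}_{B^*})(x)>\lambda'\|f\|_{\Phi,\phi,B^*}\bigr\}.
\end{align*}
Here $B^*$ is the enlarged ball from condition (BB4). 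To show $\mu(E)\le \frac{1}{2\mathscr{C}_0^3}\mu(B)$ we use the hypotheses $T\in\mathbb{W}_{\Psi,\psi,\lambda}$ and the weak bound $\|T\|_{L^{(\Psi,\psi)}\to wL^{(\Psi,\psi)}}$. The assumption $(\Phi,\phi)\in E_{mb}$ gives the weak-type bound for the Orlicz maximal operator, and the embedding $L^{(\Psi,\psi)}\subset L^{(\Phi,\phi)}$ coming from $(\Phi,\Psi,\phi,\psi)\in\mathscr{Y}\otimes\mathscr{G}$ converts the $\Psi$-level weak estimate into one normalized by the $\Phi$-average. The threshold $\lambda>3\mathscr{C}_0^6$ is what makes the resulting measure fraction small enough.

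The second step is a Calderón–Zygmund-type covering. Using the density property and the covering property of the ball-basis, we cover $E$ up to a null set by balls $\{B_j\}\subset B$. These satisfy $\sum_j\mu(B_j)\le \mathscr{C}_0\mu(E)$, and we split them into two subfamilies of pairwise disjoint balls; this splitting is why the result involves two sparse families $\mathcal S_1,\mathcal S_2$. On $B\setminus\bigcup_j B_j$ the bound holds directly. On each $B_j$ we write
\[
\mathscr{T}(f\mathbb{I}_{B^*})=\mathscr{T}(f\mathbb{I}_{B_j^*})+\mathscr{T}(f\mathbb{I}_{B^*\setminus B_j^*}),
\]
using the linearity of $\mathscr{T}$. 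The second term is handled by the $\Psi$-bounded oscillation conditions in Definition~\ref{def-BOO}. The constant $\mathscr{C}_1(T)$ controls the oscillation $\|\mathscr{T}(f\mathbb{I}_{B^*\setminus B_j^*})(x)-\mathscr{T}(f\mathbb{I}_{B^*\setminus B_j^*})(x')\|_{\mathbb{B}}$ by $\|f\|_{\Phi,\phi,B^*}$. Picking $x'\in B_j\setminus E$, which is possible by the parent-child comparability of the covering balls, the constant $\mathscr{C}_2(T)$ then bounds $T(f\mathbb{I}_{B^*\setminus B_j^*})(x')$ by $\mathscr{C}(T)\|f\|_{\Phi,\phi,B^*}$. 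Iterating on each $B_j$ produces the sparse families, and since $\mu(\bigcup B_j)\le \frac12\mu(B)$ the disjoint sets $E_B=B\setminus\bigcup B_j$ have measure at least $\frac{1}{2\mathscr{C}_0^3}\mu(B)$. This yields \eqref{ine main1}. To pass from $f\mathbb{I}_{B^*}$ to $f$ we use the fact that $X$ is exhausted by balls, together with the (BB4) argument that handles the tail term via $\mathscr{C}_2$.

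The third step is the commutator \eqref{ine main2}. We run the same recursion for $[\mathscr{T},b]f$, and on each sparse ball $P$ we use the identity
\[
[\mathscr{T},b]f=(b-b_P)\mathscr{T}f-\mathscr{T}((b-b_P)f).
\]
The exceptional set is enlarged to contain the level sets of both $T(f\mathbb{I}_{B^*})$ and $T((b-b_{B^*})f\mathbb{I}_{B^*})$. The recursion then produces $|b-b_P|\,\|f\|_{\Phi,\phi,P}+\|(b-b_P)f\|_{\Phi,\phi,P}$ on each $P$, which is exactly the sparse operator $\mathcal{A}^b_{\mathcal{S},\Phi,\phi}$. One subtlety is that when moving from $B$ to $B_j$ the constant changes from $b_{B^*}$ to $b_{B_j^*}$; we absorb this by adding and subtracting, which preserves the structure of the telescoping.

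The main obstacle is the first step. It requires turning the $\Psi$-weak-type bound, which is modular or Luxemburg rather than a power $L^r$ bound, into a local distributional estimate normalized by the $\Phi$-average on a single ball with a fraction below $\frac{1}{2\mathscr{C}_0^3}$. Homogeneity is not available here, so we would first apply the weak bound to $f\mathbb{I}_{B^*}/\|f\|_{\Phi,\phi,B^*}$ and then use the $\mathbb{W}_{\Psi,\psi,\lambda}$ condition. That condition is presumably designed precisely to give $\mu\{T(f\mathbb{I}_{B^*})>\lambda\|f\|_{\Phi,\phi,B^*}\}\lesssim\lambda^{-1}\mu(B^*)$ after the (BB4) comparison $\mu(B^*)\le\mathscr{C}_0\mu(B)$.
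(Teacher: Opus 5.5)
The gap is in your second step, where you bound the far part $\mathscr{T}(f\mathbf{1}_{B^*\setminus B_j^*})$ on $B_j$ (with $B^*=B^\P$). First, you have the two constants swapped. \eqref{BOO-1} is not an oscillation estimate. It only says that for each ball $B_0$ there is \emph{some} ball $B\supsetneq B_0$, not one you choose, with $\sup_{x\in B_0}\|\mathscr{T}(f\mathbf{1}_{B^\P})(x)-\mathscr{T}(f\mathbf{1}_{B_0^\P})(x)\|_{\mathbb{B}}\le\mathscr{C}_1(T)\|f\|_{\Phi,\phi,B^\P}$. The oscillation of $\mathscr{T}(f\mathbf{1}_{B^*\setminus B_j^*})$ over $B_j$ comes from \eqref{BOO-2}, with bound $\mathscr{C}_2(T)\langle\|f\mathbf{1}_{B^*}\|\rangle_{\Phi,\phi,B_j}$.

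More seriously, you then need the size of $\mathscr{T}(f\mathbf{1}_{B^*\setminus B_j^*})$ at one good point $x'$, and nothing in Definition \ref{def-BOO} bounds pointwise values. The condition $x'\notin E$ controls $T(f\mathbf{1}_{B^*})(x')$ and the maximal function, but not the local part $T(f\mathbf{1}_{B_j^*})(x')$. The paper handles this by running the stopping time with $\Upsilon=\max\{|Tf|,\,T^*f,\,\mathscr{C}(T)\mathcal{M}_{\mathfrak{B},\Phi,\phi}f\}$. Here $T^*f(x)=\sup_{B\ni x}\|\mathscr{T}f(x)-\mathscr{T}(f\mathbf{1}_{B^\P})(x)\|_{\mathbb{B}}$ is the truncation, and its weak-type bound (Theorem \ref{Th 3.7}) is where \eqref{BOO-2} and the weak boundedness actually enter. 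Your exceptional set contains no such term. For the commutator it would also need the corresponding level sets for $(b-c)f$.

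The second missing idea is the existence of the good point itself. A general ball-basis has no parent ball. Balls obtained from the density property (Lemma \ref{Lm more}) have no maximality property, can lie a.e.\ inside $E$, and need not lie inside $B$, only inside a hull. So \emph{parent-child comparability} is not available. The paper replaces the dyadic parent by a ball $\widetilde{G}$ with $G^{[2]}\subset\widetilde{G}\subset A^\P$, $\widetilde{G}\not\subset F$, and $\Delta_T(G^{[2]},\widetilde{G})\lesssim\mathscr{C}(T)$ (Lemmas \ref{Lm 3.4}--\ref{Lm 3.6}). Building this ball is where $\mathscr{C}_1(T)$ really enters, through Lemma \ref{Lm 3.5}. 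It controls the intermediate piece between $G^{[3]}$ and $\widetilde{G}^\P$ on $G^{[2]}$ (Lemma \ref{Lm 3.8}(b)). After that, \eqref{BOO-2} and the bound on $\Upsilon$ at a point $\xi\in\widetilde{G}\setminus F$ finish the estimate. This is also why all averages are taken over hulls $B^{[3]}$.

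Likewise, a general ball-basis does not let you split a covering into two pairwise disjoint subfamilies; Theorem \ref{main} assumes no Besicovitch-type property. The paper obtains sparseness instead from the packing bound \eqref{3.33} via Lemma \ref{Lm 3.9}. Your commutator step matches the paper's argument: you telescope along the chain using $[\mathscr{T},b]g=(b-c)\mathscr{T}g-\mathscr{T}((b-c)g)$, with $c$ the mean over the parent's hull. It rests, however, on the same missing local estimate.
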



	The conditions \((\Phi, \Psi, \phi, \psi) \in \mathscr{Y} \otimes \mathscr{G}\), \((\Phi, \phi) \in E_{mb}\), and \(T \in \mathbb{W}_{\Psi, \psi, \lambda}\) appearing in Theorem \ref{main} are introduced in Definition \ref{Y-G condition}, Definition \ref{Emb condition}, and Definition \ref{Assume condition}, respectively. Our second main result concerns the norm estimate of \(\Psi\)-BOO operators on abstract Orlicz-Morrey spaces endowed with a ball-basis.
	
	\begin{theorem}\label{main'}
		Let \((X, \mathfrak{M}, \mu)\) be a measure space endowed with a ball-basis \(\mathfrak{B}\) and let \(\lambda > 3 \mathscr{C}_0^6\). Suppose \(\omega \in A^\mathfrak{B}_p\). Assume that \(\mathfrak{B}\) fulfills the Besicovitch \(\mathfrak{N}\)-condition, that \((\Phi, \Psi, \phi, \psi)\in\mathscr{Y} \otimes \mathscr{G}\), and that \((\Phi, \phi) \in E_{mb}\). If \(T\) is a \(\mathbb{B}\)-valued \(\Psi\)-BOO and \(T\) belongs to \(\mathbb{W}_{\Psi, \psi, \lambda}\), then the following hold:
		\begin{align}\label{ine main'1}
			\|T\|_{L^{(\Psi, \psi)}(\omega) \to L^{(\Psi,\psi)}(\omega)} \lesssim \mathscr{C}(T) \, \mathscr{K} \, \mathfrak{N}.
		\end{align}
		and
		\begin{align}\label{ine main'2}
			\|[T, b]\|_{L^{(\Psi, \psi)}(\omega) \to L^{(\Psi,\psi)}(\omega)} \lesssim \mathscr{C}(T) \, \mathscr{K} \, \mathfrak{N} \, \|b\|_{\mathfrak{L}^1_\psi(\omega)}.
		\end{align}
		Here \(\mathfrak{N}\) and \(\mathscr{K}\) denote the constants appearing in \eqref{N} and \eqref{K}, respectively, and \(\|\cdot\|_{\mathfrak{L}^1_\psi(\omega)}\) is as defined in \eqref{Campanato norm}.
	\end{theorem}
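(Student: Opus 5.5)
The natural route is to combine the pointwise sparse domination of Theorem \ref{main} with weighted bounds for the sparse Orlicz averaging operators. Fix $f \in L^{(\Psi,\psi)}(\omega)$. Theorem \ref{main} gives two $\frac{1}{2\mathscr{C}_0^3}$-sparse families $\mathcal{S}_1,\mathcal{S}_2 \subset \mathfrak{B}$ with
\[
Tf(x) \lesssim \mathscr{C}(T)\bigl[\mathcal{A}_{\mathcal{S}_1,\Phi,\phi}f(x)+\mathcal{A}_{\mathcal{S}_2,\Phi,\phi}f(x)\bigr]
\]
a.e. on each ball $B$, and the analogous bound for $[\mathscr{T},b]$ with $\mathcal{A}^b_{\mathcal{S}_i,\Phi,\phi}$. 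Since $X$ is covered by balls of the basis (by the ball-basis axioms), these bounds hold a.e. on $X$. The lattice property of the weighted Orlicz-Morrey quasi-norm then reduces \eqref{ine main'1} to
\[
\|\mathcal{A}_{\mathcal{S},\Phi,\phi}\|_{L^{(\Psi,\psi)}(\omega)\to L^{(\Psi,\psi)}(\omega)} \lesssim \mathscr{K}\,\mathfrak{N},
\]
uniformly over $\frac{1}{2\mathscr{C}_0^3}$-sparse families $\mathcal{S}$. Likewise \eqref{ine main'2} reduces to the same bound for $\mathcal{A}^b_{\mathcal{S},\Phi,\phi}$, with an extra factor $\|b\|_{\mathfrak{L}^1_\psi(\omega)}$.

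For the sparse operator I would argue by duality. Write $\mathcal{A}_{\mathcal{S},\Phi,\phi}f=\sum_{A\in\mathcal{S}}\|f\|_{\Phi,\phi,A}\mathbb{I}_A$. Test it against $g$ in the associated block space, using the duality result of the paper (the block space dual is $L^{(\Psi,\psi)}(\omega)$). Sparseness supplies pairwise disjoint sets $E_A\subset A$ with $\mu(E_A)\gtrsim\mu(A)$, so
\[
\sum_{A}\|f\|_{\Phi,\phi,A}\int_A|g|\,\omega \lesssim \sum_A \|f\|_{\Phi,\phi,A}\,\langle g\omega\rangle_A\,\mu(E_A) \le \int_X M_{\Phi,\phi}f\cdot M(g\omega)\,d\mu .
\]
Each maximal operator is then controlled by the Besicovitch $\mathfrak{N}$-condition. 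That condition lets one extract from any cover by balls a subfamily of overlap at most $\mathfrak{N}$, which gives weak and strong bounds for the $\mathfrak{B}$-maximal operators with constant proportional to $\mathfrak{N}$ on weighted Orlicz spaces when $\omega\in A^{\mathfrak{B}}_p$. The constant $\mathscr{K}$ from \eqref{K} should be exactly the constant in the $L^{(\Phi,\phi)}\to L^{(\Psi,\psi)}$-type comparison furnished by $(\Phi,\Psi,\phi,\psi)\in\mathscr{Y}\otimes\mathscr{G}$ and $(\Phi,\phi)\in E_{mb}$. The Morrey part is handled by localizing: fix a ball $B_0$, and split $\mathcal{S}$ into balls contained in $\mathscr{C}_0 B_0$ and balls meeting $B_0$ but large relative to it. The small-ball part is treated by the Orlicz estimate above. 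For the large-ball part, the averages $\|f\|_{\Phi,\phi,A}$ are bounded by the Morrey norm through $\psi(A)$, and the growth condition in $\mathscr{G}$ lets the sum over the increasing chain of such balls converge geometrically.

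For the commutator, expand $\mathcal{A}^b_{\mathcal{S}}f$ into its two pieces: $|b-b_A|\,\|f\|_{\Phi,\phi,A}$ and $\|(b-b_A)f\|_{\Phi,\phi,A}$. The oscillation $|b-b_A|$ is handled with the Campanato norm \eqref{Campanato norm}, which bounds the weighted oscillation of $b$ over $A$ by $\psi$-scaled multiples of $\|b\|_{\mathfrak{L}^1_\psi(\omega)}$. This lets me dominate each piece by $\|b\|_{\mathfrak{L}^1_\psi(\omega)}$ times an operator of the same sparse/maximal type, to which the previous step applies; the second piece additionally needs a generalized Hölder inequality in Orlicz norms. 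The main obstacle I expect is the large-ball (Morrey tail) part together with this commutator piece. Summing over a sparse chain of balls of geometrically growing measure requires the doubling-type control of $\psi$ and of $\omega(A)$ from $A^{\mathfrak{B}}_p$, even though no dyadic structure is available. I would first attempt this via the ball-basis property that each ball lies in a controlled enlargement $B^*$ with $\mu(B^*)\le\mathscr{C}_0\mu(B)$, iterating it to produce the chain.
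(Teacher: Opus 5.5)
The decisive gap is the reduction itself. You funnel both estimates into a bound $\|\mathcal{A}_{\mathcal{S},\Phi,\phi}\|_{L^{(\Psi,\psi)}(\omega)\to L^{(\Psi,\psi)}(\omega)}\lesssim\mathscr{K}\,\mathfrak{N}$ that is uniform over $\frac{1}{2\mathscr{C}_0^3}$-sparse families. Under the stated hypotheses this bound is false.

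Take $X=[0,1)$ with Lebesgue measure and its dyadic subintervals as ball-basis. This is a martingale basis with $\mathscr{C}_0=2$, and the Besicovitch condition holds with $\mathfrak{N}=1$ by passing to maximal intervals. Take also $\omega\equiv 1$, $\Phi=\Psi=t^q$ with $q\ge 1$, and $\phi=\psi\equiv 1$. Then $(\Phi,\Psi,\phi,\psi)\in\mathscr{Y}\otimes\mathscr{G}$ with $\mathscr{K}=1$, $(\Phi,\phi)\in E_{mb}$, and $L^{(\Psi,\psi)}(\omega)=L^\infty$ by dyadic differentiation. The chain $\mathcal{S}=\{[0,2^{-k})\}_{k\ge 0}$ is $\frac12$-sparse (take $E_{[0,2^{-k})}=[2^{-k-1},2^{-k})$). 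Yet $\mathcal{A}_{\mathcal{S},\Phi,\phi}\mathbf{1}=\sum_{k\ge 0}\mathbf{1}_{[0,2^{-k})}$ equals $m+1$ on $[2^{-m-1},2^{-m})$, so it is unbounded.

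This is exactly where your large-ball step breaks. Relative to $B=[0,2^{-m})$ there are $m$ larger balls of the chain, each with $\|\mathbf{1}\|_{\Phi,\phi,A}=1$. Nothing in $\mathscr{G}$ forces decay ($\psi\equiv 1\in\mathscr{G}$), so there is no geometric convergence. Summing such tails requires a Nakai-type hypothesis like $\sum_{j\ge 0}\Psi^{-1}(\psi(2^j r))\lesssim\Psi^{-1}(\psi(r))$, and it cannot be added for free. $E_{mb}$ has to hold uniformly in $B$ for its use in the proof of Theorem \ref{main}, so it pins $\phi(\mu(B))$ between two constants. Hence $\psi\le C(\psi,\phi)\,\phi$ is bounded, and such a condition fails for small $r$ as soon as there are balls of arbitrarily small measure. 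In the example the first estimate of the theorem is itself harmless: $wL^{(\Psi,\psi)}$ is comparable to $L^\infty$, and $T\in\mathbb{W}_{\Psi,\psi,\lambda}$ already yields boundedness. What fails is the passage through a uniform sparse bound, which throws that information away.

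Two further steps would not go through as written.

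First, the Besicovitch condition gives $\mathfrak{N}$-dependent bounds only for maximal operators that average with respect to the same measure as the norm (e.g.\ $M^{\omega}_{\mathfrak{B}}$ on $L^p(\omega)$). Your $\mathcal{M}_{\mathfrak{B},\Phi,\phi}f$ and $M(g\omega)$ average with respect to $\mu$ and must be estimated in $\omega$- and $\omega^{1-p'}$-weighted norms. That needs the $A^{\mathfrak{B}}_p$ theory and unavoidably brings in $[\omega]_{A^{\mathfrak{B}}_p}$, a dependence absent from the target bound. Already on $L^p(\omega)$, testing on $\omega^{1-p'}\mathbf{1}_B$ shows $\|M_{\mathfrak{B}}\|_{L^p(\omega)\to L^p(\omega)}\ge[\omega]_{A^{\mathfrak{B}}_p}^{1/p}$. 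Moreover, Theorem \ref{main2} is an unweighted duality with pairing $\int fg\,d\mu$, so the weighted block-space predual of $L^{(\Psi,\psi)}(\omega)$ you test against is not available.

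Second, for the commutator, $\|b\|_{\mathfrak{L}^1_\psi(\omega)}$ only controls $\omega$-averages of $|b-b_B|$. The piece $\|(b-b_A)f\|_{\Phi,\phi,A}$ needs a generalized H\"older inequality with $b-b_A$ in an Orlicz class strictly above $L^1$, i.e.\ a John--Nirenberg-type estimate. An $L^1$-Campanato norm with general $\psi\in\mathscr{G}$ does not provide this.

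For comparison, the paper uses neither duality nor localization. It dominates the sparse operator by $\mathcal{M}_{\mathfrak{B},\Phi,\phi}f$ times an overlap function as in \eqref{3.16}, replaces the overlap by $\mathfrak{N}$ via the Besicovitch condition, and then invokes boundedness of $\mathcal{M}_{\mathfrak{B},\Phi,\phi}$ (with Lemma \ref{G-Holder} for the commutator). That route rests on the same uniform sparse bound and does not repair it. Besicovitch controls the overlap of an extracted subfamily with the same union, not of $\mathcal{S}$ itself: in the example above $\mathfrak{N}=1$ while $\sum_{A\in\mathcal{S}}\mathbf{1}_A$ is unbounded.
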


	
	Finally, we discuss the properties of abstract Orlicz-Morrey spaces endowed with a ball-basis, and obtain the dual space of such a space.
	
	\begin{theorem}\label{main2}
		Let \((X, \mathfrak{M}, \mu)\) be a measure space endowed with a ball-basis \(\mathfrak{B}\), and let \(\Psi \in \mathcal{Y}\) and \(\phi \in \mathcal{G}\). Then
		\begin{align*}
			\left( \mathscr{B}_{(\Phi, \phi)}(X) \right)^* = L^{(\widetilde{\Phi}, \phi)} (X).
		\end{align*}
		where \(\mathscr{B}_{(\Phi, \phi)}(X)\) is as defined in \eqref{B-space}.
	\end{theorem}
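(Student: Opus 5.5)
The plan is to prove the duality in the standard way for block spaces paired with Morrey-type spaces, adapted to the ball-basis setting. Recall from \eqref{B-space} that \(\mathscr{B}_{(\Phi,\phi)}(X)\) consists of sums \(g=\sum_j \lambda_j a_j\) with \(\sum_j|\lambda_j|<\infty\). Each block \(a_j\) is supported in a ball \(B_j\in\mathfrak{B}\) and has a Luxemburg-type norm controlled by the reciprocal of the Morrey weight \(\phi(B_j)\). The norm of \(g\) is the infimum of \(\sum_j|\lambda_j|\) over such decompositions. The proof splits into two inclusions:
\begin{itemize}
\item every \(f\in L^{(\widetilde{\Phi},\phi)}(X)\) defines a bounded functional on \(\mathscr{B}_{(\Phi,\phi)}(X)\);
\item every bounded functional arises this way, with comparable norms.
\end{itemize}

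\textbf{First inclusion.} For a single block \(a\) supported in \(B\), I will apply the generalized Hölder inequality for the complementary pair \((\Phi,\widetilde{\Phi})\) on \(B\):
\[
\int_B |fa|\,d\mu\le 2\,\|f\|_{\widetilde{\Phi},B}\,\|a\|_{\Phi,B}.
\]
Here \(\|\cdot\|_{\Phi,B}\) is the localized Luxemburg norm used to define both spaces. The block normalization makes the factors \(\phi(B)\) cancel, so the right side is bounded by \(C\|f\|_{L^{(\widetilde{\Phi},\phi)}}\). Summing over \(j\) with \(\sum_j|\lambda_j|\) gives
\[
|L_f(g)|\le C\|f\|_{L^{(\widetilde{\Phi},\phi)}}\|g\|_{\mathscr{B}},
\]
and absolute convergence makes \(L_f\) independent of the chosen decomposition.

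\textbf{Second inclusion.} Let \(L\in(\mathscr{B}_{(\Phi,\phi)})^*\). Fix a ball \(B\in\mathfrak{B}\). Any \(h\in L^\Phi(B)\) with \(\|h\|_{\Phi,B}\) finite, extended by zero, is a multiple of a block, with \(\|h\|_{\mathscr{B}}\le \|h\|_{\Phi,B}\) times the \(\phi(B)\) normalization. So \(L\) restricts to a bounded functional on \(L^\Phi(B,\mu)\).
\begin{itemize}
\item \emph{Local representation.} Using the Orlicz duality \((L^\Phi)^*\supset L^{\widetilde{\Phi}}\), I will obtain \(f_B\in L^{\widetilde{\Phi}}(B)\) with \(L(h)=\int hf_B\,d\mu\). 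To avoid needing \(\Delta_2\) for full duality, I would restrict to simple functions, or to the order-continuous part, which suffices since these are dense in the block space. The density of \(\mathscr{B}\)-finite combinations of blocks is immediate from the definition.
\item \emph{Gluing.} Uniqueness on overlaps gives \(f_B=f_{B'}\) a.e. on \(B\cap B'\). The ball-basis exhaustion property (\(X\) is a countable increasing union of balls, from the ball-basis axioms, Definition \ref{def-ball-basis}) then lets me glue the \(f_B\) into a single measurable \(f\).
\item \emph{Norm bound.} I will test \(L\) against \(h=\operatorname{sgn}(f)\,\widetilde{\Phi}\)-dual extremizers on each \(B\). This gives \(\|f\|_{\widetilde{\Phi},B}\le C\|L\|\,\phi(B)^{\pm1}\) with the matching normalization. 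Taking the supremum over \(B\) yields \(\|f\|_{L^{(\widetilde{\Phi},\phi)}}\lesssim\|L\|\).
\item \emph{Identification.} Finally, \(L=L_f\) on finite block sums, and hence everywhere by continuity.
\end{itemize}

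\textbf{Main obstacle.} The hardest step is the local representation without structural assumptions on \(\Phi\) such as \(\Delta_2\), since \((L^\Phi)^*\) may contain singular parts. I would first handle this via the Radon–Nikodym theorem. Define \(\nu(E)=L(\mathbb{I}_E)\) for \(E\subset B\) measurable. Boundedness of \(L\) together with \(\|\mathbb{I}_E\|_{\Phi,B}\to0\) as \(\mu(E)\to0\) shows that \(\nu\) is \(\sigma\)-additive and absolutely continuous with respect to \(\mu\), so it has a density \(f_B\). Then extend from simple functions using the truncation \(\min(|f|,N)\mathbb{I}_{\{|x|\text{ bounded ball}\}}\) and Fatou's lemma for the Luxemburg norm. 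This gives \(\|f_B\|_{\widetilde{\Phi},B}\) bounded via the Orlicz norm formula \(\|f\|_{\widetilde{\Phi}}\approx\sup\{\int|fh|:\|h\|_\Phi\le1\}\) with \(h\) simple.
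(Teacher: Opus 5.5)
Your proposal is correct and follows essentially the same route as the paper. For $L^{(\widetilde{\Phi},\phi)}(X)\hookrightarrow(\mathscr{B}_{(\Phi,\phi)}(X))^*$ you use Hölder's inequality together with the block normalization. For the converse you restrict $L$ to $\mathscr{M}^{\Phi}$-functions supported on a fixed ball, which embed into the block space with norm at most $\mu(B)\phi(\mu(B))\|\cdot\|_{\Phi,\phi,B}$, and then invoke the local Orlicz duality $(\mathscr{M}^{\Phi})^*=L^{\widetilde{\Phi}}$. Your Radon--Nikodym proof of that duality, the explicit gluing via uniqueness on overlaps and a countable cover by balls, and the density argument extending $L=L_f$ to all blocks (which needs the paper's requirement that blocks lie in $\mathscr{M}^{\Phi}(X)$) fill in steps the paper only sketches.
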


	We highlight the core contributions of this work as below:
	\begin{itemize}
		
		\item Our established framework of Banach-valued \(\Psi\)-BOOs integrates the Orlicz-Morrey maximal operator, \(\theta\)-Calder\'on-Zygmund operators, and those operators lying beyond the Calder\'on-Zygmund theory. It should be noted that we incorporate the Orlicz-Morrey maximal operator into the theory of singular integrals, and we classify the intrinsic square operators as well as Carleson-type operators as belonging to the realm beyond Calder\'on-Zygmund theory.


		\item The abstract weighted Orlicz-Morrey spaces endowed with a ball-basis that we have constructed encompass a wide variety of abstract spaces, including abstract Orlicz-Morrey spaces \cite{SZ2026}, abstract Orlicz spaces \cite{Zhou2025}, abstract Morrey spaces \cite{Shan2026}, and abstract weighted Morrey spaces \cite{Liyc}. They also cover classical weighted Orlicz-Morrey spaces \cite{weight}, central Orlicz-Morrey spaces \cite{COM-2015}, Orlicz-Morrey spaces on spaces of homogeneous type \cite{OM-DGS-2019}, and local Orlicz-Morrey spaces \cite{LOM}. Given the generality of the Orlicz-Morrey framework, it can also incorporate other classical function spaces, such as those of Orlicz type and Morrey type.


		\item Our first result, Theorem \ref{main}, establishes a pointwise characterization of \(\Psi\)-BOOs and their commutators within abstract Orlicz-Morrey spaces endowed with a ball-basis; this serves as a local description. Using Theorem \ref{main}, we obtain our second result, Theorem \ref{main'}, in the setting of weighted Orlicz-Morrey spaces endowed with a ball-basis. Our approach differs from classical proofs \cite{OM-Nakai-2004, OM-Nakai-2006, OM-Nakai2008, OM-DGS-2015, OM-Nakai-2021-1}, where the key obstacle is the absence of dyadic structure in general measure spaces—a feature that Euclidean spaces crucially rely on to make classical arguments work. Our third result, Theorem \ref{main2}, investigates the dual space of abstract Orlicz-Morrey spaces endowed with a ball-basis. Together, these results constitute a foundational framework for operator theory and duality on abstract Orlicz-Morrey spaces endowed with a ball-basis.


		\item As applications, in Sect. \ref{Section2} we present several common operators in Orlicz-Morrey spaces: the Orlicz-Morrey maximal operator, \(\theta\)-Calder\'on-Zygmund operators, intrinsic square operators, and Carleson-type operators. Making use of Theorem \ref{main} and Theorem \ref{main'}, we establish pointwise estimates and weighted norm inequalities for these classical operators and their commutators (see Theorem \ref{example-CZ}, Theorem \ref{example-intrinsic}, and Theorem \ref{example-Carleson}). The hypotheses we adopt are not the same as those in \cite{OM-Nakai-2004, OM-Nakai-2006, OM-Nakai2008, OM-DGS-2015, OM-Nakai-2021-1}.

	\end{itemize}

	\subsection{Basic assumptions}\label{Section1.3}~
	

	The spaces we investigate are broader than Euclidean ones, even though Euclidean spaces already represent a generalization of classical distance. Two essential tasks emerge: developing a more general metric and substituting standard geometric figures (like balls and cubes) with adequate counterparts. This process leads directly to the definition of a ball-basis \cite{Abstract2019}, which is later used to build abstract function spaces equipped with this object.

	\begin{definition}\label{def-ball-basis}
		Let \((X, \mathfrak{M}, \mu)\) be a measure space. A collection \(\mathfrak{B} \subset \mathfrak{M}\) is called a {\tt ball-basis} if it fulfills the following properties:
		
		\begin{list}{\rm (\theenumi)}{\usecounter{enumi}\leftmargin=1.2cm \labelwidth=1cm \itemsep=0.2cm \topsep=.2cm \renewcommand{\theenumi}{BB\arabic{enumi}}}
			\item\label{list:BB1} Every member \(B \in \mathfrak{B}\) satisfies \(0 < \mu(B) < \infty\).
			
			\item\label{list:BB2} For any two points \(x, y \in X\), there exists some \(B \in \mathfrak{B}\) containing both \(x\) and \(y\).
			
			\item\label{list:BB3} For every measurable set \(E \in \mathfrak{M}\) and any \(\varepsilon > 0\), one can select a finite or infinite sequence of balls \(\{B_k\} \subset \mathfrak{B}\) (with \(k = 1, 2, \dots\)) such that \(\mu\left(E \triangle \bigcup_k B_k\right) < \varepsilon\).
			
			\item\label{list:BB4} For each \(B \in \mathfrak{B}\), there exists a distinguished ball \(B^\P \in \mathfrak{B}\), called the {\tt ball-hull} of \(B\), satisfying:
			\[
			\bigcup_{\substack{B'\in\mathfrak{B}:\, B'\cap B \neq \varnothing \\ \mu(B') \le 2\mu(B)}} B' \;\subseteq\; B^\P, \qquad 
			A \subset B \;\Longrightarrow\; A^\P \subset B^\P,
			\]
			and the measure estimate
			\[
			\mu(B^\P) \le \mathscr{C}_0 \, \mu(B),
			\]
			where \(\mathscr{C}_0 \ge 1\) is a fixed constant.
		\end{list}
		
		In this case, we say that \((X, \mathfrak{M}, \mu)\) is a {\t measure space endowed with a ball-basis}.
	\end{definition}

	\begin{remark}\label{example-ball-basis}
		The following examples are some common geometric structures, all of which are ball-basis.
		
		\begin{list}{\rm (\theenumi)}{\usecounter{enumi}\leftmargin=1.2cm \labelwidth=1cm \itemsep=0.2cm \topsep=.2cm \renewcommand{\theenumi}{Ex\arabic{enumi}}}
			\item\label{list:R1} \textbf{Euclidean balls}. In \(\mathbb{R}^n\), consider the family of Euclidean balls
			\[
			\mathfrak{B}_E = \bigl\{ B(x_k, r) : x_k \in \mathbb{R}^n,\ k=1,2,\dots,\ r>0 \bigr\},
			\]
			where \(B(x_k, r) = \{ x \in \mathbb{R}^n : \|x_k - x\|_{l^2} \le r \}\). Then \(\mathfrak{B}_E\) forms a ball-basis on \((\mathbb{R}^n, \mathscr{L}^n)\). Here \(\|x-y\|_{l^2} = \left(\sum_{i=1}^n (x_i-y_i)^2\right)^{1/2}\). In fact, \eqref{list:BB1}, \eqref{list:BB2} and \eqref{list:BB3} are easily verified. For any fixed \(\ell \geq 1+2^{1 + 1/n}\), taking \(\mathscr{C}_0 = \ell^n\) and setting \(B^\P = \ell B\) for every \(B \in \mathfrak{B}_E\) yields \eqref{list:BB4}.

			\item\label{list:R2} \textbf{Dyadic lattices}. On \((\mathbb{R}^n, \mathscr{L}^n)\), any dyadic lattice \(\mathfrak{D}\) is a ball-basis. Recall that a dyadic lattice \(\mathfrak{D}\) in \(\mathbb{R}^n\) is a collection of cubes enjoying the following properties:
			
			\begin{itemize}
				\item \textit{Closure under dyadic descendants}: If a cube \(Q\) belongs to \(\mathfrak{D}\), then all cubes obtained by repeatedly halving its sides (its dyadic children) also lie in \(\mathfrak{D}\). Denoting by \(D(Q)\) the set of all descendants of \(Q\), we have \(D(Q)\subset\mathfrak{D}\).
				
				\item \textit{Existence of a common ancestor}: For any two cubes \(Q_1, Q_2 \in \mathfrak{D}\), there exists some \(Q \in \mathfrak{D}\) such that both \(Q_1\) and \(Q_2\) are dyadic descendants of \(Q\); that is, \(Q_1, Q_2 \in D(Q)\).
				
				\item \textit{Covering of compact sets}: Every compact subset \(K\subset\mathbb{R}^n\) is contained in some cube \(Q \in \mathfrak{D}\).
			\end{itemize}

			\item \textbf{\(\rho\)-ball}: In a homogeneous type space \((X,\,\rho,\, \mathfrak{M},\,\mu)\), we consider the family of balls \(B(x,\,r) = \left\{y \in X : \rho(x , y) < r,\, r>0 \right\}\) denoted by \(\mathfrak{B}_{\rho}= \left\{B(x,\,r): x \in X\right\}\subset \mathfrak{M}\). Then \(\mathfrak{B}'_{\rho}\) is a ball-basis \cite[Theorem 7.1]{Abstract2019}, where
			\[
			\mathfrak{B}'_{\rho}= 
			\begin{cases}
				\mathfrak{B}_{\rho}\cup\{X\}, & \mu(X)<\infty, \\[4pt]
				\mathfrak{B}_{\rho}, & \mu(X)=\infty.
			\end{cases}
			\]

			\item \textbf{Martingale Basis}: Consider a measure space \((X,\,\mathfrak{M},\,\mu)\) endowed with a martingale basis \(\mathfrak{B}\). This structure is characterized by the following conditions: 
			\begin{itemize}
				\item[\(\bullet\)] The collection \(\mathfrak{B}=\bigcup_{j \in \mathbb{Z}} \mathfrak{B}_j\) is a basis for the \(\sigma\)-algebra of measurable subsets of  \(X\).
				\item[\(\bullet\)] For every integer \(j\), the collection \(\mathfrak{B}_j\) constitutes a finite or countable partition of \(X\).  
				\item[\(\bullet\)]  For every integer \(j\), any set \(B \in \mathfrak{B}_j\) can be expressed as a union of sets from \(\mathfrak{B}_{j+1}\). 
				\item[\(\bullet\)] For any two points \(x,\, y \in X\), one can find a set \(B \in \mathfrak{B}\) that contains both \(x\) and \(y\).
			\end{itemize}
			For each set \(B\) in the basis \(\mathfrak{B}\), let \(\mathscr{F}(B)\) be the minimal set in \(\mathfrak{B}\) that contains \(B\). Define \(\mathscr{F}^1 = \mathscr{F}\) and, for \(k \ge 1\), set \(\mathscr{F}^{k+1} = \mathscr{F}(\mathscr{F}^k)\). The associated expanded set \(B^\P\) is then given by
			\[B^\P= 
			\left\{ 
			\begin{array}{ll}
				B, & \mu(\mathscr{F}(B)) > 2\mu(B), \\
				\mathscr{F}^k(B),  & \mu(\mathscr{F}^k(B)) \leq 2\mu(B) < \mu(\mathscr{F}^{k+1}(B)). \\
			\end{array}
			\right.\]
			It is straightforward to verify that \(\mathfrak{B}\) forms a ball-basis for the measure space \((X,\,\mathfrak{M},\,\mu)\) with constant \(\mathscr{C}_0 = 2\). Notably, the measure \(\mu\) in this setting may be non-doubling.
			
		\end{list}
		
		However, it should be noted that the following example does not meet the requirements of a ball-basis.

		\begin{list}{\rm (\theenumi)}{\usecounter{enumi}\leftmargin=1.2cm \labelwidth=1cm \itemsep=0.2cm \topsep=.2cm \renewcommand{\theenumi}{R\arabic{enumi}}}
			
			\item \textbf{Standard dyadic cubes}: Take the standard dyadic cubes in \(\mathbb{R}^n\), whose collection is denoted by \(\mathcal{D} = \bigcup_{k\in\mathbb{Z}}\mathcal{D}_k\), with each level given by
			\[
			\mathcal{D}_k=\left\{Q:\,Q=\prod_{i=1}^n[2^km_i,\,2^k(m_i+1)),\,k\in\mathbb{Z},\,m_i\in\mathbb{Z},\,i=1,2,\ldots,n\right\}.
			\]
			Evidently, \(\mathcal{D}\) fails to meet condition \eqref{list:BB2}. Consequently, it cannot serve as a ball-basis on \((\mathbb{R}^n, \mathscr{L}^n)\).

			\item \textbf{Parallel rectangles}: It is not true that the family of all rectangles with sides parallel to the axes forms a ball-basis on \((\mathbb{R}^n,\mathscr{L}^n)\). For instance, in the plane, set
			\[B = [0,\,1) \times [0,\,1)~~~\text{and}~~~B_k = [0,\,2^{k+1}) \times [0,\,2^{-k}),\,k = 0,1,2,....\]
			For every \(k\), \(B_k\cap B\ne\varnothing\) and \(|B_k|=2|B|\). Yet the union \(\bigcup_k B_k\) cannot be contained in any axis-parallel rectangle. This shows that condition \eqref{list:BB4} fails.

			\item \textbf{Zygmund rectangles}: In the space \((\mathbb{R}^3,\mathscr{L}^3)\), take the family \(\mathfrak{B}\) consisting of all Zygmund rectangles—that is, axis-parallel rectangles whose side lengths are proportional to \(s\), \(t\), and \(st\) for some \(s,t>0\). This family does not constitute a ball-basis. A counterexample is obtained by setting
			\[B = [0,1)^3,~~~\text{and}~~~B_k = [0, 2^{k+1/2}) \times [0, 2^{-k}) \times [0, 2^{1/2}),\,k = 0,1,2,....\]
			Each \(B_k \in \mathfrak{B}\) and satisfies \(B_k \cap B \neq \varnothing\) and \(|B_k| = 2|B|\). However, no Zygmund rectangle in \(\mathfrak{B}\) can contain the union \(\bigcup_{k=0}^\infty B_k\). Hence condition \eqref{list:BB4} of a ball-basis is violated.

			\item \textbf{Centrally shrinking cube sequence}: Consider the subset of \(\mathbb{R}^n\) given by the cube \[Q = [a, b]^n = \{(x_1, \dots, x_n) : x_i \in [a, b],\, i=1,2,\dots, n \},\] where \(a < b\), and the corresponding space \((Q, \mathscr{L}^n)\). The centrally shrinking cube sequences are defined as
			\[Q_k=\left[a+\sum_{l=1}^k\frac{b-a}{2^{l+1}},b-\sum_{l=1}^k\frac{b-a}{2^{l+1}}\right]^n,\,\,\widetilde{Q}_k=\left[a+\frac{b-a}{2^{k+1}},b-\frac{b-a}{2^{k+1}}\right]^n,\quad k=1,\,2,\,....\]
			The family \(\mathfrak{Q}=\{Q_k,\,\widetilde{Q}_k:\,k=1,\,2,\,...\}\) does not satisfy condition \eqref{list:BB3} of a ball-basis. Therefore, \(\mathfrak{Q}\) is not a ball-basis for \((Q, \mathscr{L}^n)\).
			
		\end{list}
		
	\end{remark}

	
	We now examine \eqref{list:BB4}, which enjoys the following property.

	\begin{proposition}\label{P-ball-basis}
		If \(A, B \in \mathfrak{B}\) satisfy \(A \cap B \neq \varnothing\) and \(\mu(A) \leq 2\mu(B)\), then \(A \subset B^\P\).
	\end{proposition}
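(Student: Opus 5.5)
This is a direct unpacking of the first containment in \eqref{list:BB4}, so the plan is simply to verify that \(A\) is one of the balls indexing the union there. Fix \(A, B \in \mathfrak{B}\) with \(A \cap B \neq \varnothing\) and \(\mu(A) \le 2\mu(B)\). Then \(A\) lies in the index family
\[
\bigl\{ B' \in \mathfrak{B} : B' \cap B \neq \varnothing,\ \mu(B') \le 2\mu(B) \bigr\},
\]
since it satisfies both defining conditions. A union contains each of its members, so
\[
A \subseteq \bigcup_{\substack{B'\in\mathfrak{B}:\, B'\cap B \neq \varnothing \\ \mu(B') \le 2\mu(B)}} B' \subseteq B^\P ,
\]
where the second inclusion is exactly the first requirement in \eqref{list:BB4}.

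We need \(\mu(B)\) to be finite and positive for the comparison \(\mu(A) \le 2\mu(B)\) to be meaningful. This is guaranteed by \eqref{list:BB1}.

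No step here is a real obstacle. The only care needed is to note that \(B^\P\) is the specific hull assigned to \(B\) in \eqref{list:BB4}, not to \(A\). The monotonicity property \(A \subset B \Rightarrow A^\P \subset B^\P\) and the measure bound \(\mu(B^\P) \le \mathscr{C}_0 \mu(B)\) are not needed for this statement.
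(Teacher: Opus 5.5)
Your proposal is correct and matches the paper's reasoning: the paper presents the proposition as an immediate consequence of the first inclusion in \eqref{list:BB4}, since \(A\) is one of the balls \(B'\) indexing that union. Your remark about \eqref{list:BB1} is harmless, though the argument does not need it.
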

	
	We next introduce a hierarchical relation among ball-hulls. For a given \(B \in \mathfrak{B}\) and any integer \(n \geq 1\), we set
	\begin{align*}
		B^{[0]} := B, \quad B^{[1]} := B^\P, \quad B^{[n+1]} := (B^{[n]})^\P.
	\end{align*}
	This definition yields \(\mu(B^{[n+1]}) \leq \mathscr{C}_0\,\mu(B^{[n]})\) and consequently \(\mu(B^{[n]}) \leq \mathscr{C}_0^n\,\mu(B)\) for every \(n \geq 0\).

	Before defining a new family of bounded oscillation operators, we first set up the requisite notation. Suppose \((X, \mathfrak{M}, \mu)\) is a measure space endowed with a ball-basis \(\mathfrak{B}\), and let \(\Phi \in \mathscr{Y}\) and \(\phi \in \mathscr{G}\) be as in \eqref{Y function} and \eqref{G function}, respectively. For a ball \(B \in \mathfrak{B}\), we write
	\begin{align*}
		\langle \|f\|\rangle_{\Phi,\phi,B}=\sup_{B'\in\mathfrak{B}:B' \supset B}\|f\|_{\Phi,\phi,B'}, 
	\end{align*}
	where \(\|\cdot\|_{\Phi, \phi, B}\) denotes the generalized Luxemburg norm introduced in \eqref{G-L-norm}.

	\begin{definition}\cite{CFA}
		Let \((X, \mathfrak{M}, \mu)\) be a measure space. We denote by \(\mathscr{L}_0(X, \mathfrak{M}, \mu)\) the collection of all real-valued measurable functions on \(X\), and let \(\mathscr{U}(X, \mathfrak{M}, \mu)\) be a suitable linear subspace thereof. Consider an operator \(T : \mathscr{U}(X, \mathfrak{M}, \mu) \to \mathscr{L}_0(X, \mathfrak{M}, \mu)\).
		
		\begin{itemize}
			\item An operator \(T_l\) is called {\tt linear} if for every \(f, g \in \mathscr{U}(X, \mathfrak{M}, \mu)\) and any scalar \(\alpha \in \mathbb{R}\),
			\begin{align*}
				T_l(\alpha \; f)(x) = \alpha \; T_lf(x),\quad \text{and}\quad T_l(f + g) = T_l(f) + T_l(g).
			\end{align*}
			
			\item An operator \(T_{sl}\) is called {\tt sublinear} if for every \(f, g \in \mathscr{U}(X, \mathfrak{M}, \mu)\) and any \(\alpha \in \mathbb{R}\),
			\begin{align*}
				|T_{sl}(\alpha \; f)(x)| = |\alpha| \; |T_{sl}f(x)|,\quad \text{and}\quad |T_{sl}(f + g)(x)| \leq |T_{sl}f(x)| + |T_{sl}g(x)|.
			\end{align*}
			
			\item A sublinear operator \(T_{sl}\) is said to be {\tt linearizable} if there exists a Banach space \(\mathbb{B}\) together with a \(\mathbb{B}\)-valued linear operator \(\mathscr{T}_{\mathbb{B}} : \mathscr{U}(X, \mathfrak{M}, \mu) \to \mathscr{L}_0(X, \mathfrak{M}, \mu; \mathbb{B})\) such that for each \(f \in \mathscr{U}(X, \mathfrak{M}, \mu)\),
			\begin{align*}
				T_{sl}f(x) = \left\|\mathscr{T}_\mathbb{B}f(x)\right\|_{\mathbb{B}} \quad \text{for } \mu\text{-almost every } x \in X.
			\end{align*}
			
		\end{itemize}
	\end{definition}

	
	We now present the \(\Psi\)-BOOs that will be investigated in the present work.
	
	\begin{definition}\label{def-BOO}
		Let \((X,\,\mathfrak{M},\,\mu)\) be a measure space endowed with a ball-basis \(\mathfrak{B}\), and let \((\Phi,\Psi,\phi,\psi) \in \mathscr{Y} \otimes \mathscr{G}\). Given a Banach space \(\mathbb{B}\), we say that an operator \(T\) is a {\tt \(\mathbb{B}\)-valued \(\Psi\)-bounded oscillation operator} {\rm (\(\Psi\)-BOO)} with respect to \(\mathfrak{B}\) (and \((\Phi,\Psi,\phi,\psi)\in\mathscr{Y} \otimes \mathscr{G}\)) if there exist constants \(\mathscr{C}_1(T), \mathscr{C}_2(T) \in (0, \infty)\) and a \(\mathbb{B}\)-valued linear operator \(\mathscr{T}\) satisfying \[Tf(x) = \|\mathscr{T}f(x)\|_\mathbb{B} \quad (\forall x \in X),\] or, in the case \(\mathbb{B} = \mathbb{R}\), a single real-valued (sub)linear operator \(\mathscr{T} := \{T\}\), such that for every \(f \in L^{(\Psi,\psi)}(X)\subset L^{(\Phi,\phi)}(X)\) the following two conditions hold:
		\begin{list}{\rm (\theenumi)}{\usecounter{enumi}\leftmargin=1.2cm \labelwidth=1cm \itemsep=0.2cm \topsep=.2cm \renewcommand{\theenumi}{\(\Psi\)-BOO-\Roman{enumi}}}
			\item\label{BOO-1} For every \(B_0 \in \mathfrak{B}\) with \(B_0^\P \subsetneq X\), there exists a ball \(B \in \mathfrak{B}\) with \(B \supsetneq B_0\) such that
			\begin{align*}
				\sup_{x \in B_0} \left\| \mathscr{T}(f\,\mathbf{1}_{B^\P})(x) - \mathscr{T}(f\,\mathbf{1}_{B_0^\P})(x) \right\|_{\mathbb{B}}
				\leq \mathscr{C}_1(T) \,\|f\|_{\Phi,\phi,B^\P}.
			\end{align*}
			
			\item\label{BOO-2} For every ball \(B \in \mathfrak{B}\),
			\begin{align*}
				\sup_{x,x' \in B} \left\| \left(\mathscr{T}f - \mathscr{T}(f \,\mathbf{1}_{B^\P})\right)(x) - \left(\mathscr{T}f - \mathscr{T}(f\, \mathbf{1}_{B^\P})\right)(x') \right\|_{ \mathbb{B}} \leq \mathscr{C}_2(T) \, \langle \|f\|\rangle_{\Phi,\phi,B}.
			\end{align*}
		\end{list}
		When \(T\) is real-valued, which is equivalent to \(\mathbb{B} = \mathbb{R}\), we will omit the reference to \(\mathbb{B}\) and the norm \(\|\cdot\|_{\mathbb{B}}\).
	\end{definition}

	\begin{definition}\label{sparse}
		A collection \(\mathcal{S} \subset \mathfrak{B} \) is called {\tt \(\beta\)-sparse}, with \(\beta \in (0, 1)\), if for each \( B \in \mathcal{S} \) there exists a subset \( E_B \subset B \) satisfying the following two properties:
		\begin{itemize}
			\item \(\mu(E_B) \geq \beta \mu(B)\);
			\item the family \(\{E_B : B \in \mathcal{S}\}\) is pairwise disjoint.
		\end{itemize}
	\end{definition}

	
	We now proceed to introduce the Orlicz-Morrey maximal operator, the sparse operator, along with their commutators.

	\begin{definition}\label{OM-S-operator}
		Let \((X, \mathfrak{M}, \mu)\) be a measure space endowed with a ball-basis \(\mathfrak{B}\).
		
		\begin{list}{\rm (\theenumi)}{\usecounter{enumi}\leftmargin=1.2cm \labelwidth=1cm \itemsep=0.2cm \topsep=.2cm \renewcommand{\theenumi}}
			\item[{\rm (OM)}]\label{OM operator} For any \(\Phi \in \mathscr{Y}\) and \(\phi \in \mathscr{G}\), the {\tt Orlicz-Morrey supremum operator} is defined as
			\begin{align}\label{O-M maximal operator}
				\mathcal{M}_{\mathfrak{B}, \Phi, \phi} f(x) := \sup_{B \in \mathfrak{B}: \, x \in B}\|f\|_{\Phi, \phi, B},
			\end{align}
			for \(x \in \bigcup_{B\in\mathfrak{B}}B \subset X\) and we set \(\mathcal{M}_{\mathfrak{B},\Phi,\phi}f(x)=0\) for \(x \notin \bigcup_{B\in\mathfrak{B}}B\).
			
			\item[{\rm (S)}\label{S operator}] Let \(\mathcal{S}\) be a \(\beta\)-sparse family. For \(\Phi \in \mathscr{Y}\), \(\phi \in \mathscr{G}\), and a measurable function \(b\), we then introduce the sparse operator and its commutator counterpart as follows:
			
			\begin{align}
				\label{sparse operator} \mathcal{A}_{\mathcal{S},\Phi,\phi}f(x) & := \sum_{B \in \mathcal{S}} \|f\|_{\Phi,\phi,B} \cdot \mathbf{1}_B(x), \\
				\label{sparse operator commutator} \mathcal{A}^b_{\mathcal{S}, \Phi, \phi}f(x) & := \sum_{\zeta_1 \uplus \zeta_2 = \{1\}} \mathcal{A}^{b, \zeta_1, \zeta_2}_{\mathcal{S}, \Phi, \phi}f(x) = \mathcal{A}^{b, \{1\}, \varnothing}_{\mathcal{S}, \Phi, \phi} f(x) + \mathcal{A}^{b, \varnothing, \{1\}}_{\mathcal{S}, \Phi, \phi} f(x),
			\end{align}
			where
			\begin{align*}
				\mathcal{A}^{b, \{1\}, \varnothing}_{\mathcal{S}, \Phi, \phi} f(x) & := \sum_{B \in \mathcal{S}} |b(x) - b_B| \, \|f\|_{\Phi, \phi, B} \cdot \mathbf{1}_B(x), \\
				\mathcal{A}^{b, \varnothing, \{1\}}_{\mathcal{S}, \Phi, \phi} f(x) & :=\sum_{B \in \mathcal{S}} \|(b(x) - b_B) f \|_{\Phi, \phi, B} \cdot \mathbf{1}_B(x),
			\end{align*}
			and
			\begin{align*}
				b_B := \frac{1}{\phi(B)} \int_B b(y) \phi(y) \, d\mu(y), 
			\end{align*}
		\end{list}
	\end{definition}


	The relationships between the Orlicz-Morrey maximal operator \eqref{O-M maximal operator} and the classical Hardy-Littlewood maximal operator, as well as between the sparse operator \eqref{sparse operator} and classical operators, have already been addressed in our previous work \cite{SZ2026}; we refer the reader to that paper for details. To conclude this subsection, we present the indispensable hypotheses that are required for our main results.

	\begin{definition}\label{Assume condition}
		Let \((X, \mathfrak{M}, \mu)\) be a measure space endowed with a ball-basis \(\mathfrak{B}\).
		
		\begin{list}{\rm (\theenumi)}{\usecounter{enumi}\leftmargin=1.2cm \labelwidth=1cm \itemsep=0.2cm \topsep=.2cm \renewcommand{\theenumi}}
			
			\item[{\rm (\(\mathbb{W}\))}]\label{W condition} Suppose \((\Phi, \Psi, \phi, \psi) \in \mathscr{Y} \otimes \mathscr{G}\). Let \(\mathbb{B}\) be a Banach space and let \(T\) be a \(\mathbb{B}\)-valued sublinear operator which is bounded from \(L^{(\Psi,\psi)}(X)\) into the weak space \(wL^{(\Psi,\psi)}(X)\). We write \(T \in \mathbb{W}_{\Psi, \psi, \lambda}\) (or simply \(T \in \mathbb{W}_{\Psi, \psi}\)) if for every pair of balls \(A, B \in \mathfrak{B}\) with \(A \subset B\), for every function \(f \in L^{(\Psi,\psi)}(X)\), and for any \(0 < \lambda < 1\), the following estimate holds:
			\begin{align}\label{D1.6}
				\mu \left( \left\{x \in A : \|\mathscr{T}(f \, \mathbb{I}_B) (x)\|_\mathbb{B} > \Psi^{-1}\left( \frac{\psi(\mu(A))}{\lambda} \right) \|T\|_{L^{(\Psi, \psi)} \to wL^{(\Psi, \psi)}} \|f\|_{\Phi, \phi, A} \right\} \right) \leq \lambda \mu(A).
			\end{align}
			
			\item[{\rm (\(\mathfrak{N}\))}]\label{N condition} Let \((X, \mathfrak{M}, \mu)\) be a measure space endowed with a ball-basis \(\mathfrak{B}\). The ball-basis \(\mathfrak{B}\) is said to satisfy the {\tt Besicovitch \(\mathfrak{N}\)-condition} with a constant \(\mathfrak{N} \in \mathbb{N}_+\) if for any family \(\mathfrak{A} \subset \mathfrak{B}\), one can extract a subfamily \(\mathfrak{A}' \subset \mathfrak{A}\) such that
			\begin{align}\label{N}
				\bigcup_{B \in \mathfrak{A}} B = \bigcup_{B \in \mathfrak{A}'} B, \quad \text{and} \quad \sum_{B \in \mathfrak{A}'} \mathbb{I}_B(x) \leq \mathfrak{N}, \quad x \in X.
			\end{align}
			
		\end{list}
	\end{definition}

	\subsection{Structure of the article}\label{Section1.4}~
	

	The remainder of this paper is organized as follows. In Sect. \ref{Section2}, we present several typical examples of \(\Psi\)-BOOs within Orlicz-Morrey spaces, including the Orlicz-Morrey maximal operator, \(\theta\)-Calder\'on-Zygmund operators, intrinsic square operators, and Carleson-type operators. Through these examples, we verify the applicability of our main theorems and refine the conditions required in classical results. In Sect. \ref{Section3}, we recall fundamental notions concerning measure spaces endowed with a ball-basis, define the generalized Luxemburg norm, abstract weighted Orlicz-Morrey spaces, and weighted Campanato spaces, and provide several preparatory lemmas. In Sect. \ref{Section4}, we systematically investigate the properties of \(\Psi\)-BOOs, establishing sparse control theorems (Theorem \ref{main}) and weighted norm inequalities (Theorem \ref{main'}) for such operators and their commutators. In Sect. \ref{Section5}, we discuss the duality theory of abstract Orlicz-Morrey spaces endowed with a ball-basis, showing that the dual space can be identified with a block space (Theorem \ref{main2}). The final appendix is divided into three parts: Appendices \ref{Appendix1}-\ref{Appendix3} compile the basic properties of Muckenhoupt weights, Young function class \(\mathscr{Y}\), and \(\mathscr{G}\)-class functions; Appendix \ref{Appendix4} supplies the proofs of several lemmas omitted from the main text; Appendix \ref{Appendix5} provides a detailed overview of five classical types of Orlicz-Morrey spaces (KK-type, Nakai-type, SST-type, DGS-type, and Ho-type), together with their historical background and existing results, and briefly discusses other generalizations.

	\section{Some common examples in Orlicz-Morrey spaces}\label{Section2}


	In this section, we collect several common operators in Orlicz-Morrey spaces. Each of these operators has been shown in \cite{SZ2026} to belong to the class of \(\Phi\)-BOOs.

	\subsection{Orlicz-Morrey maximal operators on measure spaces}\label{Section2.1}~
	

	The Orlicz-Morrey maximal operator is presented in \eqref{O-M maximal operator}.

	\begin{lemma}
		Let \((\Phi, \Psi, \phi, \psi) \in \mathscr{Y} \otimes \mathscr{G}\) and let \((X, \, \mathfrak{M}, \, \mu)\) be a measure space endowed with a ball-basis \(\mathfrak{B}\). Then the operator \(\mathcal{M}_{\mathfrak{B}, \Phi, \phi}\) is a \(\Psi\)-BOO with respect to \(\mathfrak{B}\).
	\end{lemma}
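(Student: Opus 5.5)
We take \(\mathbb{B}=\mathbb{R}\) and \(\mathscr{T}=\{T\}\) with \(T=\mathcal{M}_{\mathfrak{B},\Phi,\phi}\), which is sublinear because \(\|\cdot\|_{\Phi,\phi,B}\) is a (Luxemburg-type) norm. We will use three properties of the generalized Luxemburg norm from \eqref{G-L-norm}, which we take to hold for \(\Phi\in\mathscr{Y}\), \(\phi\in\mathscr{G}\):
\begin{itemize}
\item[(a)] monotonicity: \(|g|\le|h|\) implies \(\|g\|_{\Phi,\phi,B}\le\|h\|_{\Phi,\phi,B}\);
\item[(b)] locality: \(\|g\|_{\Phi,\phi,B}=\|g\mathbf{1}_B\|_{\Phi,\phi,B}\);
\item[(c)] an enlargement bound: \(\|g\|_{\Phi,\phi,B}\lesssim \|g\|_{\Phi,\phi,B'}\) whenever \(B\subset B'\) and \(\mu(B')\le \mathscr{C}_0^k\mu(B)\) with \(k\) fixed.
\end{itemize}
Property (c) should follow from the doubling-type behaviour of \(\phi\in\mathscr{G}\) together with the convexity of \(\Phi\). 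The constants \(\mathscr{C}_1,\mathscr{C}_2\) will depend only on \(\mathscr{C}_0\) and the \(\mathscr{G}\)-constants.

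\textbf{Condition \eqref{BOO-2}.} Fix \(B\) and \(x\in B\). Split the supremum defining \(\mathcal{M}_{\mathfrak{B},\Phi,\phi}f(x)\) and \(\mathcal{M}_{\mathfrak{B},\Phi,\phi}(f\mathbf{1}_{B^\P})(x)\) according to whether the ball \(B'\ni x\) satisfies \(\mu(B')\le 2\mu(B)\) or \(\mu(B')>2\mu(B)\).

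In the first case, Proposition \ref{P-ball-basis} gives \(B'\subset B^\P\). Hence \(f=f\mathbf{1}_{B^\P}\) on \(B'\), so by (b) the two norms coincide.

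In the second case, \(B\cap B'\ne\varnothing\) and \(\mu(B)<\mu(B')/2\), so the same proposition with the roles exchanged gives \(B\subset B'^{\P}\). Then (a) and (c) yield
\[
\|f\|_{\Phi,\phi,B'}\le \|f\|_{\Phi,\phi,B'^\P}\cdot C\le C\langle\|f\|\rangle_{\Phi,\phi,B},
\]
since \(B'^\P\supset B\). The same bound holds for \(f\mathbf{1}_{B^\P}\).

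Consequently, for every \(x\in B\),
\[
0\le \mathcal{M}f(x)-\mathcal{M}(f\mathbf{1}_{B^\P})(x)\le C\langle\|f\|\rangle_{\Phi,\phi,B}.
\]
To see this, note that \(\mathcal{M}f(x)\) is either attained up to \(\varepsilon\) on small balls, where the difference is at most zero, or is itself at most \(C\langle\|f\|\rangle\). The difference of this quantity at two points \(x,x'\in B\) is therefore bounded by \(C\langle\|f\|\rangle_{\Phi,\phi,B}\), which is \eqref{BOO-2}.

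\textbf{Condition \eqref{BOO-1}.} Given \(B_0\) with \(B_0^\P\subsetneq X\), take \(B:=B_0^\P\). This satisfies \(B\supsetneq B_0\) provided \(B_0^\P\ne B_0\). If \(B_0^\P=B_0\), use (BB2) with a point outside \(B_0\) to choose a strictly larger ball; this degenerate case should be handled with care. For \(x\in B_0\), sublinearity and \(\mathbf{1}_{B^\P}-\mathbf{1}_{B_0^\P}\ge0\) give
\[
|\mathcal{M}(f\mathbf{1}_{B^\P})(x)-\mathcal{M}(f\mathbf{1}_{B_0^\P})(x)|\le \mathcal{M}(f\mathbf{1}_{B^\P\setminus B_0^\P})(x).
\]
For a ball \(B'\ni x\), we bound \(\|f\mathbf{1}_{B^\P\setminus B_0^\P}\|_{\Phi,\phi,B'}\) by \(C\|f\|_{\Phi,\phi,B^\P}\), again splitting by size. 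If \(\mu(B')\le2\mu(B_0)\), then \(B'\subset B_0^\P\) and the norm vanishes. If \(\mu(B')>2\mu(B_0)\), compare \(B'\) with \(B^\P\). When \(\mu(B')\le 2\mu(B)\), we have \(B'\subset B^\P\), and (a) together with a reverse-enlargement estimate (restricting to a smaller ball inside a comparable larger one) bounds the norm by \(C\|f\|_{\Phi,\phi,B^\P}\). When \(B'\) is larger still, the function is supported in \(B^\P\subset B'^\P\) and the \(\phi\)-normalization decreases, by the monotonicity of \(\phi(t)\) or \(\Phi^{-1}(\phi)\) built into \(\mathscr{G}\).

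\textbf{Main obstacle.} The hard step is exactly this comparison of Luxemburg norms over different balls. It needs the precise \(\mathscr{G}\)-class properties of \(\phi\): almost increasing \(\phi\) together with almost decreasing \(\phi(t)/t\), or their Orlicz analogues. We would first try to prove the single inequality
\[
\|g\mathbf{1}_{B_1}\|_{\Phi,\phi,B_2}\lesssim\|g\|_{\Phi,\phi,B_1}\quad\text{for } B_1\subset B_2^{[k]},
\]
with the implicit constant depending on \(k\), and derive both cases above from it.
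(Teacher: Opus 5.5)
Your verification of \eqref{BOO-2} is correct, and so, up to one repair, is \eqref{BOO-1} when $B_0^\P\ne B_0$ with the choice $B=B_0^\P$. Your property (c) is the second estimate of Lemma~\ref{lemma:AB}\eqref{AB-2} with $\omega\equiv1$, combined with $\phi$ being almost decreasing. Your ``normalization decreases'' step is the first estimate there, which uses that $t\phi(t)$ is almost increasing. That pair of properties, not ``$\phi$ almost increasing, $\phi(t)/t$ almost decreasing'', defines the paper's class $\mathscr{G}$. The paper itself gives no proof of this lemma; it defers to \cite{SZ2026}.

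The repair concerns the subcase $\mu(B')>2\mu(B)$. There the inclusion $B^\P\subset B'^\P$ needs $\mu(B^\P)\le 2\mu(B')$, which does not follow when $\mathscr{C}_0>4$. In the remaining range, $B'\cap B^\P\ne\varnothing$ and $\mu(B')<\mu(B^\P)/2$ give $B'\subset B^{[2]}$ with $\mu(B^{[2]})\le\mathscr{C}_0^2\mu(B)<\mathscr{C}_0^2\mu(B')/2$. So (c) and the restriction estimate still apply.

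The genuine gap is the case $B_0^\P=B_0\subsetneq X$, which you leave open, and your suggested remedy does not work. Every ball $B'\ni x$ with $x\in B_0$ that meets $B^\P\setminus B_0$ has $\mu(B')>2\mu(B_0)$. When $\mu(B')\le 2\mu(B)$ you only know $B'\subset B^\P$, and then $\|f\|_{\Phi,\phi,B'}\lesssim\|f\|_{\Phi,\phi,B^\P}$ requires $\mu(B^\P)\lesssim\mu(B')$. For instance, take $\phi\equiv1$ and $f=\mathbf{1}_{B'\setminus B_0}$. The left side of \eqref{BOO-1} is then at least $1/\Phi^{-1}(2)$, while $\|f\|_{\Phi,\phi,B^\P}\le 1/\Phi^{-1}(\mu(B^\P)/\mu(B'))$.

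A ball obtained from \eqref{list:BB2} through a point outside $B_0$ (or its hull) has no such measure control. Nor can you fall back on comparability with $\mu(B_0)$: in the degenerate case (e.g.\ non-doubling martingale bases) there is in general no $B\supsetneq B_0$ with $\mu(B)\le C\mu(B_0)$ and $C$ independent of $B_0$. Since $\mathscr{C}_1(T)$ must be uniform, this case is not a formality.

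The missing idea is to choose $B$ by near-minimality. Set $m:=\inf\{\mu(G):G\in\mathfrak{B},\ G\supsetneq B_0\}$. This family is nonempty: \eqref{list:BB2} gives a ball $G$ meeting both $B_0$ and $X\setminus B_0$. Proposition~\ref{P-ball-basis} then forces $\mu(G)>2\mu(B_0)$ and $B_0\subset G^\P$, hence $G^\P\supsetneq B_0$. Now take $B\supsetneq B_0$ with $\mu(B)\le 2m$.

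Suppose $x\in B_0\cap B'$ and $\mu(B')>2\mu(B_0)$. Then $B_0\subset B'^\P$ by Proposition~\ref{P-ball-basis}, and $B'\not\subset B_0$. Hence $B'^\P\supsetneq B_0$, and $\mathscr{C}_0\mu(B')\ge\mu(B'^\P)\ge m\ge\mu(B)/2$. This is exactly the comparability your case analysis needs, and with it the argument closes. The same choice also covers the non-degenerate case, where $m\le\mathscr{C}_0\mu(B_0)$.
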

	

	Fix \(B \in \mathfrak{B}\) and pick a non-negative constant \(K\). Observe that
	\begin{align*}
		\|K\|_{\Phi, \phi, B} = \inf \left\{\lambda > 0: \frac{1}{\mu(B) \phi( \mu(B))} \int_B \Phi \left(\frac{K}{\lambda}\right)dx \leq 1\right\} = \frac{K}{\Phi^{-1}(\phi( \mu(B)))}.
	\end{align*}
	This leads to the following hypothesis, which serves as another crucial assumption for the sparse control of \(\Phi\)-BOOs discussed later.
	
	\begin{definition}\label{Emb condition}
		We say that the pair \((\Phi, \phi)\) satisfies the {\tt \(E_{mb}\)-condition}, denoted \((\Phi, \phi) \in E_{mb}\), if there exist constants \(K_1, K_2 > 0\) and some \(\lambda > 0\) such that
		\begin{align*}
			K_1 \leq \Phi^{-1}(\lambda \phi( \mu(B))) \leq K_2,
		\end{align*}
	\end{definition}
	

	Naturally, if \((\Phi, \Psi, \phi, \psi) \in \mathscr{Y} \otimes \mathscr{G}\) and \((\Phi,\phi) \in E_{mb}\), then the maximal operator \(\mathcal{M}_{\mathfrak{B}, \Phi, \phi}\) is bounded from \(L^{(\Psi,\psi)}(\omega)\) to \(L^{(\Psi,\psi)}(\omega)\).

	\subsection{\(\theta\)-Calder\'on-Zygmund operators on \(\mathbb{R}^n\)}\label{Section2.2}~~
	

	Let \(\theta : [0, +\infty) \to [0, +\infty)\) be a modulus of continuity; this means that \(\theta\) is increasing, subadditive, and \(\theta(0)=0\). If \(\theta\) satisfies the {\tt Dini condition}, namely, its {\tt Dini norm}
	\begin{align*}
		\|\theta\|_{\rm Dini} := \int_0^1 \theta(t) \, \frac{dt}{t} < \infty,
	\end{align*}
	then we write \(\theta \in \mathrm{Dini}\). Throughout this subsection, \(\theta\) is always taken to be a modulus of continuity.

	\begin{definition}
		Let \(\omega\) be a modulus of continuity. A kernel \(K(x,y)\) defined on \(\mathbb{R}^n \times \mathbb{R}^n \setminus \{(x,x) : x\in\mathbb{R}^n\}\) is called a {\tt \(\theta\)-Calder\'on-Zygmund kernel} if there exists a positive constant \(C_K\) such that the following three estimates hold:
		
		\begin{list}{\rm (\theenumi)}{\usecounter{enumi}\leftmargin=1.2cm \labelwidth=1cm \itemsep=0.2cm \topsep=.2cm \renewcommand{\theenumi}{\arabic{enumi}}}
			\item {\it Size condition}:
			\begin{align*}
				\left|K(x, y)\right| \leq \frac{C_K}{\left|x-y\right|^n} \quad \text{for} \quad  x \ne y.
			\end{align*}
			
			\item {\it Smoothness condition in the first variable}:
			\begin{align*}
				\left|K(x, y) - K(x', y)\right| \leq \frac{C_K }{\left|x-y\right|^n}\,\theta \left(\frac{|x-x'|}{|x-y|} \right) \quad \text{for} \quad  |x-x'| \leq \frac{1}{2}|x-y|.
			\end{align*}
			
			\item {\it Smoothness condition in the second argument}:
			\begin{align*}
				\left|K(x, y) - K(x, y')\right| \leq \frac{C_K }{\left|x-y\right|^n}\,\theta \left(\frac{|y-y'|}{|x-y|} \right) \quad \text{for} \quad  |y-y'| \leq \frac{1}{2}|x-y|.
			\end{align*}
		\end{list}
		
		In the particular case \(\theta(t)=t^{\delta}\) with \(0<\delta\le1\), such a kernel is called a {\tt standard Calder\'on-Zygmund kernel}.
		
		A linear operator \(T: \mathcal{S} (\mathbb{R}^n) \to \mathcal{S}' (\mathbb{R}^n)\) is called a {\tt \(\theta\)-Calder\'on-Zygmund operator} if the following two conditions are met:
		
		\begin{list}{\rm (\theenumi)}{\usecounter{enumi}\leftmargin=1.2cm \labelwidth=1cm \itemsep=0.2cm \topsep=.2cm \renewcommand{\theenumi}{\roman{enumi}}}
			
			\item \(T\) is bounded on \(L^2(\mathbb{R}^n)\); 
			
			\item There exists a \(\theta\)-Calder\'on-Zygmund kernel \(K\) such that for all \(f\in L^2_{\text{\rm comp}}(\mathbb{R}^n)\),
			\begin{align*}
				Tf(x)=\int_{\mathbb{R}^n} K(x,y)f(y)\,dy,\qquad x \notin {\rm supp}\,f.
			\end{align*}
			
			\end{list}
		\end{definition}

	\begin{lemma}
		Assume that \((\Phi, \Psi, \phi, \psi) \in \mathscr{Y} \otimes \mathscr{G}\). If \(T\) is a \(\theta\)-Calder\'on-Zygmund operator with \(\omega \in {\rm Dini}\), then \(T\) is a \(\Psi\)-BOO with respect to \(\mathfrak{B}_E\) (see \eqref{list:R1}), with constants  
		\begin{align*}
			\mathscr{C}_1(T) \lesssim C_K \quad \text{and} \quad \mathscr{C}_2(T) \lesssim C_K\|\omega\|_{\rm Dini}.
		\end{align*}
	\end{lemma}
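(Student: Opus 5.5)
We verify the two conditions \eqref{BOO-1} and \eqref{BOO-2} of Definition \ref{def-BOO} directly from the kernel estimates, with \(\mathbb{B}=\mathbb{R}\) and \(\mathfrak{B}=\mathfrak{B}_E\), \(B^\P=\ell B\). The single analytic input is a local Orlicz--H\"older bound: for a ball \(Q\),
\[
\frac{1}{\mu(Q)}\int_Q |f|\,d\mu \lesssim \|f\|_{\Phi,\phi,Q}.
\]
This follows from Jensen's inequality together with the normalization of \(\phi\) in \(\mathscr{G}\), or from the generalized H\"older inequality among the preparatory lemmas of Sect.~\ref{Section3}. Throughout we take \(f\in L^{(\Psi,\psi)}\subset L^{(\Phi,\phi)}\), so that \(f\) is locally integrable and the kernel representation applies off the support.

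\textbf{Condition \eqref{BOO-1}.} Given \(B_0=B(x_0,r)\), we take \(B=2B_0\) (or any fixed dilate \(\kappa B_0\), chosen so that \(B\supsetneq B_0\)). For \(x\in B_0\), the difference \(T(f\mathbf 1_{B^\P})(x)-T(f\mathbf 1_{B_0^\P})(x)\) equals \(T(f\mathbf 1_{B^\P\setminus B_0^\P})(x)\). Since \(x\in B_0\) and \(y\notin \ell B_0\), we have \(|x-y|\gtrsim r\), so the kernel representation applies. The size condition then gives
\[
|T(f\mathbf 1_{B^\P\setminus B_0^\P})(x)|\le C_K\int_{B^\P\setminus B_0^\P}\frac{|f(y)|}{|x-y|^n}\,dy\lesssim \frac{C_K}{\mu(B^\P)}\int_{B^\P}|f|\lesssim C_K\|f\|_{\Phi,\phi,B^\P}.
\]
This shows \(\mathscr{C}_1(T)\lesssim C_K\).

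\textbf{Condition \eqref{BOO-2}.} For \(x,x'\in B=B(x_0,r)\), the quantity to bound is \(|T(f\mathbf 1_{(B^\P)^c})(x)-T(f\mathbf 1_{(B^\P)^c})(x')|\). It is at most \(\int_{(\ell B)^c}|K(x,y)-K(x',y)||f(y)|\,dy\). Because \(\ell\ge 3\), we have \(|x-x'|\le 2r\le \tfrac12|x-y|\), so the first-variable smoothness condition applies. Decompose \((\ell B)^c\) into the annuli \(A_k=2^{k}\ell B\setminus 2^{k-1}\ell B\), \(k\ge1\). On \(A_k\) the kernel difference is bounded by \(C_K(2^k r)^{-n}\theta(c2^{-k})\). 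Hence
\[
\lesssim C_K\sum_{k\ge1}\theta(c2^{-k})\frac{1}{\mu(2^k\ell B)}\int_{2^k\ell B}|f|\lesssim C_K\sum_{k\ge1}\theta(c2^{-k})\,\|f\|_{\Phi,\phi,2^k\ell B}.
\]
Each ball \(2^k\ell B\) contains \(B\), so \(\|f\|_{\Phi,\phi,2^k\ell B}\le\langle\|f\|\rangle_{\Phi,\phi,B}\). Finally, monotonicity and subadditivity of \(\theta\) give \(\sum_k\theta(c2^{-k})\lesssim\int_0^1\theta(t)\,dt/t=\|\theta\|_{\rm Dini}\) (the lemma's \(\omega\) is this \(\theta\)). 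This yields \(\mathscr{C}_2(T)\lesssim C_K\|\theta\|_{\rm Dini}\).

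\textbf{Main obstacle.} The delicate step is the averaging inequality \(\frac1{\mu(Q)}\int_Q|f|\lesssim\|f\|_{\Phi,\phi,Q}\) with a constant uniform in \(Q\). The factor \(\phi(\mu(Q))\) in the Luxemburg normalization could introduce a ball-dependent constant of size \(\Phi^{-1}(\phi(\mu(Q)))\). If the general lemma does not absorb this, we would first try to control it using the structural assumptions on \(\mathscr{Y}\otimes\mathscr{G}\) (doubling/almost-monotonicity of \(\phi\)). Failing that, we would fall back on an \(E_{mb}\)-type bound \(\Phi^{-1}(\phi(\mu(Q)))\approx1\), which is exactly the normalization suggested by the computation of \(\|K\|_{\Phi,\phi,B}\) for constants.
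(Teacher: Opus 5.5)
Your architecture is the standard one: the size condition with \(B=2B_0\) for \eqref{BOO-1}, and first-variable smoothness on dyadic annuli plus a Dini sum for \eqref{BOO-2}. However, the step you flag as the main obstacle is a genuine gap, and it cannot be closed under the stated hypotheses.

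Jensen's inequality gives exactly
\[
\frac{1}{\mu(Q)}\int_Q|f|\,d\mu\le \Phi^{-1}\bigl(\phi(\mu(Q))\bigr)\,\|f\|_{\Phi,\phi,Q},
\]
with equality when \(f\) is constant on \(Q\). Moreover, \(\mathscr{G}\) carries no normalization. For instance, \(\phi(r)=r^{-1/2}\) lies in \(\mathscr{G}\) and gives \(\Phi^{-1}(\phi(r))\to\infty\) as \(r\to0\).

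The Orlicz--Morrey H\"older inequality quoted in Sect.~\ref{Section2.3} does not help either. It produces the factor \(\phi(\mu(Q))/\widetilde{\Phi}^{-1}(\phi(\mu(Q)))\), which is comparable to \(\Phi^{-1}(\phi(\mu(Q)))\). Your two estimates therefore only yield
\[
\mathscr{C}_1(T)\lesssim C_K\sup_{r>0}\Phi^{-1}(\phi(r)),\qquad \mathscr{C}_2(T)\lesssim C_K\|\theta\|_{\rm Dini}\sup_{r>0}\Phi^{-1}(\phi(r)),
\]
and nothing in the \(\mathscr{Y}\otimes\mathscr{G}\)-condition makes this supremum finite.

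Your hoped-for rescue through the structure of \(\mathscr{Y}\otimes\mathscr{G}\) cannot work, because the statement itself fails without an extra hypothesis. Take the Hilbert transform on \(\mathbb{R}\), which has \(\theta(t)=t\). Set \(\Phi=\Psi=t\) and \(\phi=\psi=r^{-1/2}\), so that \(\|f\|_{\Phi,\phi,B}=\mu(B)^{-1/2}\int_B|f|\). Let \(B_0=[-r_0,r_0]\) and \(f=\mathbf{1}_{[\ell r_0,2\ell r_0]}\).

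For every interval \(B\supsetneq B_0\), the set \(E'=B^\P\cap[\ell r_0,2\ell r_0]\) has positive length and lies in \(B^\P\setminus B_0^\P\) up to a null set. For \(x\in B_0\) this gives
\[
\bigl|T(f\mathbf{1}_{B^\P})(x)-T(f\mathbf{1}_{B_0^\P})(x)\bigr|=\frac{1}{\pi}\int_{E'}\frac{dy}{y-x}\ge\frac{|E'|}{\pi(2\ell+1)r_0},\qquad \|f\|_{\Phi,\phi,B^\P}=\frac{|E'|}{\mu(B^\P)^{1/2}}\le\frac{|E'|}{(2\ell r_0)^{1/2}}.
\]
Hence \(\mathscr{C}_1(T)\gtrsim r_0^{-1/2}\) for every admissible choice of \(B\), which is unbounded as \(r_0\to0\). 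Condition \eqref{BOO-2} fails in the same way: take \(B=[-r,r]\), \(f=\mathbf{1}_{[2\ell r,3\ell r]}\), \(x=-r\), \(x'=r\). Then the left side is bounded below by a constant, while \(\langle\|f\|\rangle_{\Phi,\phi,B}\le(\ell r)^{1/2}\).

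So the missing ingredient is a hypothesis, not a technique. You need \(\sup_{r>0}\Phi^{-1}(\phi(r))<\infty\). This follows from the upper half of \((\Phi,\phi)\in E_{mb}\): concavity of \(\Phi^{-1}\) gives \(\Phi^{-1}(\phi)\le\max\{1,\lambda^{-1}\}K_2\). That condition is imposed in Theorem~\ref{example-CZ}. With it added, your argument goes through, with the constants multiplied by this supremum.

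One further slip needs repair. The bound \(\ell\ge3\) only gives \(|x-y|>2r\), whereas the first-variable smoothness condition needs \(|x-x'|\le 2r\le\frac12|x-y|\), i.e.\ \(|x-y|\ge4r\). Either fix \(\ell\ge5\), which is allowed in \eqref{list:R1}, or estimate the piece \(5B\setminus\ell B\) by the size condition alone.
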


	From Theorem \ref{main} together with Theorem \ref{main'}, we deduce the following conclusion.

	\begin{theorem}\label{example-CZ}
		Let \(\lambda > 3\mathscr{C}_0^6\). Suppose \(T\) is a \(\theta\)-Calder\'on-Zygmund operator as defined above, and assume that \(T \in \mathbb{W}_{\Psi, \psi, \lambda}\) and \(b \in \mathfrak{L}^1_\psi(\omega)\). Furthermore, let \((\Phi,\Psi,\phi,\psi) \in \mathscr{Y} \otimes \mathscr{G}\) and \((\Phi, \phi) \in E_{mb}\). Then the following conclusions hold.
		
		\begin{list}{\rm (\theenumi)}{\usecounter{enumi}\leftmargin=1.2cm \labelwidth=1cm \itemsep=0.2cm \topsep=.2cm \renewcommand{\theenumi}{\alph{enumi}}}
			\item There exist two families \(\mathcal{S}_1, \mathcal{S}_2 \subset \mathfrak{B}_E\), each of which is \(\frac{1}{2\mathscr{C}_0^3}\)-sparse, such that for every function \(f \in L^{(\Psi,\psi)}(\mathbb{R}^n) \subset L^{(\Phi,\phi)}(\mathbb{R}^n)\), for every ball \(B \in \mathfrak{B}_E\), and \(a.e. \, x \in B\),
			\begin{align*}
				|T f(x)| \lesssim \left(\mathscr{C}_1(T) + \mathscr{C}_2(T) + \|T\|_{L^{(\Psi, \psi)}(\mathbb{R}^n) \to wL^{(\Psi,\psi)}(\mathbb{R}^n)} \right) \cdot \left[ \mathcal{A}_{\mathcal{S}_1, \Phi,\phi} f(x) + \mathcal{A}_{\mathcal{S}_2, \Phi,\phi} f(x) \right],
			\end{align*}
			and
			\begin{align*}
				|[T, b]f(x)| \lesssim \left(\mathscr{C}_1(T) + \mathscr{C}_2(T) + \|T\|_{L^{(\Psi, \psi)}(\mathbb{R}^n) \to wL^{(\Psi,\psi)}(\mathbb{R}^n)} \right) \cdot \left[ \mathcal{A}^b_{\mathcal{S}_1, \Phi,\phi} f(x) + \mathcal{A}^b_{\mathcal{S}_2, \Phi,\phi} f(x) \right].
			\end{align*}
			
			\item Additionally, if \(\mathfrak{B}_E\) fulfills the Besicovitch \(\mathfrak{N}\)-condition, then
			\begin{align*}
				\|T\|_{L^{(\Psi, \psi)}(\omega) \to L^{(\Psi,\psi)}(\omega)} \lesssim \mathscr{K} \, \mathfrak{N} \, \left(\mathscr{C}_1(T) + \mathscr{C}_2(T) + \|T\|_{L^{(\Psi, \psi)}(\mathbb{R}^n) \to wL^{(\Psi,\psi)}(\mathbb{R}^n)} \right),
			\end{align*}
			and
			\begin{align*}
				\|[T, b]\|_{L^{(\Psi, \psi)}(\omega) \to L^{(\Psi,\psi)}(\omega)} \lesssim \mathscr{K} \, \mathfrak{N} \, \left(\mathscr{C}_1(T) + \mathscr{C}_2(T) + \|T\|_{L^{(\Psi, \psi)}(\mathbb{R}^n) \to wL^{(\Psi,\psi)}(\mathbb{R}^n)} \right) \|b\|_{\mathfrak{L}^1_\psi(\omega)}.
			\end{align*}
		\end{list}
	\end{theorem}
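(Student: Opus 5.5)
The statement to prove is Theorem \ref{example-CZ}, and the plan is to obtain it as a direct specialization of Theorems \ref{main} and \ref{main'} to the ball-basis \(\mathfrak{B}_E\). The only structural input needed beyond those theorems is the preceding lemma: a \(\theta\)-Calder\'on-Zygmund operator with Dini modulus is a \(\Psi\)-BOO with respect to \(\mathfrak{B}_E\), with \(\mathscr{C}_1(T)\lesssim C_K\) and \(\mathscr{C}_2(T)\lesssim C_K\|\theta\|_{\rm Dini}\).

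First, I would check that \((\mathbb{R}^n,\mathscr{L}^n)\) with \(\mathfrak{B}_E\) is a measure space endowed with a ball-basis, which is \eqref{list:R1}. Its constant is \(\mathscr{C}_0=\ell^n\) with \(B^\P=\ell B\), so the threshold \(\lambda>3\mathscr{C}_0^6\) has concrete meaning. Next I would take \(\mathbb{B}=\mathbb{R}\) and \(\mathscr{T}=T\). Since \(T\) is linear, \(Tf(x)=\|\mathscr{T}f(x)\|_{\mathbb{B}}\) becomes \(|Tf(x)|\), and the commutator \([\mathscr{T},b]\) is just \([T,b]\).

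For part (a), all hypotheses of Theorem \ref{main} are then in place:
\begin{itemize}
\item \((\Phi,\Psi,\phi,\psi)\in\mathscr{Y}\otimes\mathscr{G}\) and \((\Phi,\phi)\in E_{mb}\), which are assumed;
\item \(T\in\mathbb{W}_{\Psi,\psi,\lambda}\), which is assumed;
\item \(T\) is a \(\Psi\)-BOO, which is the lemma.
\end{itemize}
Inequalities \eqref{ine main1} and \eqref{ine main2} then give the two pointwise bounds, with \(\frac{1}{2\mathscr{C}_0^3}\)-sparse families \(\mathcal{S}_1,\mathcal{S}_2\subset\mathfrak{B}_E\). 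Here \(\mathscr{C}(T)\) is exactly the bracketed constant in the statement.

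For part (b), I would add the Besicovitch \(\mathfrak{N}\)-condition on \(\mathfrak{B}_E\) and \(\omega\in A_p^{\mathfrak{B}_E}\), which is implicit in the weighted norm. Then \eqref{ine main'1} and \eqref{ine main'2} yield the weighted bounds, with the factor \(\|b\|_{\mathfrak{L}^1_\psi(\omega)}\) for the commutator.

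The only point needing care is that the statement is not literally uniform with Theorem \ref{main'}. The kernel hypothesis is written with "\(\omega\in\mathrm{Dini}\)", where \(\omega\) should read \(\theta\); this is a notational clash with the weight \(\omega\), and I would resolve it by reading the Dini hypothesis as being on \(\theta\). I would also note that if one wants explicit constants, substituting the lemma's bounds gives \(\mathscr{C}(T)\lesssim C_K(1+\|\theta\|_{\rm Dini})+\|T\|_{L^{(\Psi,\psi)}\to wL^{(\Psi,\psi)}}\). No genuine analytic obstacle remains, because all the work is contained in the two main theorems and the lemma.
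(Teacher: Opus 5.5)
Your proposal is correct and follows the paper's route. The paper also obtains Theorem \ref{example-CZ} by specializing Theorems \ref{main} and \ref{main'} to $\mathfrak{B}_E$ with $\mathbb{B}=\mathbb{R}$, using the preceding lemma that a Dini $\theta$-Calder\'on--Zygmund operator is a $\Psi$-BOO; your reading of the Dini hypothesis as a condition on $\theta$ and of $\omega\in A_p^{\mathfrak{B}_E}$ as implicit in part (b) correctly supplies what the statement leaves unstated.
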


	\subsection{Intrinsic square operators on \(\mathbb{R}^n\)}\label{Section2.3}~~

	In the present subsection, we keep a fixed exponent \(0 < \alpha \leq 1\). By \({\rm Lip}_{\alpha}\) we mean the collection of all Lipschitz functions \(\varphi\) on \(\mathbb{R}^n\) with homogeneous norm \(1\), supported in the unit ball \(\{ x \in \mathbb{R}^n : |x| \leq 1 \}\), and satisfying the zero integral condition \(\int_{\mathbb{R}^{n}} \varphi(x) \, dx = 0\). For \(f \in L^{1}_{{\rm loc}}(\mathbb{R}^{n})\) and a point \((y, t) \in \mathbb{R}^n \times \mathbb{R}_+ := \mathbb{R}^{n+1}_+\), the quantity \(A_{\alpha}f(t,y)\) is defined as
	\begin{align*}
		A_{\alpha}f(t, y) \;:=\; \sup_{\varphi \in {\rm Lip}_{\alpha}} \left| (\varphi_t * f)(y) \right|,
	\end{align*}
	where \(\varphi_t(x) = t^{-n} \varphi(x / t)\), and we call \(\varphi_t\) the kernel of \(A_{\alpha}\).

	Let \(\Gamma_\beta(x) := \{(y, t) \in \mathbb{R}^{n+1}_+ : |x-y| < \beta t\}\) for \(\beta \in (0, \infty)\). We introduce the following three {\tt intrinsic square operators}:
	\begin{align}
		\label{G 1} g_{\alpha}f(x) & := \left(\int_{0}^{\infty} \left( A_\alpha f(t, x) \right)^2\frac{dt}{t} \right)^{\frac{1}{2}},\\
		\label{G 2} g_{\alpha, \beta}f(x) & := \left( \iint_{\Gamma_\beta(x)} \left( A_\alpha f(t, z) \right)^2 \frac{dzdt}{t^{n+1}} \right)^{\frac{1}{2}},\\
		\label{G 3} g^*_{\lambda,\alpha}f(x) & := \left(\iint_{\mathbb{R}^{n+1}_+}\left(\frac{t}{t+|x-z|}\right)^{n\lambda}\left( A_\alpha f(t, z) \right)^2\frac{dzdt}{t^{n+1}} \right)^{\frac{1}{2}}, \quad \lambda > 3+\frac{2\alpha}{n}.
	\end{align}
	
	To simplify notation, we introduce the following three Banach spaces:
	\begin{align}
		\label{U 1} \mathbb{B}_{g_{\alpha}} & := \left\{ F_1 : \mathbb{R}_+ \to \mathbb{R} \;\big|\; \left\|f_1\right\|_{\mathbb{B}_{g_{\alpha}}} := \left( \int_0^\infty |f_1(t)|^2 \frac{dt}{t} \right)^{\frac{1}{2}} < \infty \right\},\\
		\label{U 2} \mathbb{B}_{g_{\alpha, \beta}} & := \left\{ F_2 : \mathbb{R}^{n+1}_+ \to \mathbb{R} \;\big|\; \left\|f_2\right\|_{\mathbb{B}_{g_{\alpha, \beta}}} := \left( \int_{\mathbb{R}^{n+1}_+} |f_2(z,t)|^2 \frac{dz dt}{t^{n+1}} \right)^{\frac12} < \infty \right\},\\
		\label{U 3} \mathbb{B}_{g^*_{\lambda,\alpha}} & := \mathbb{B}_{g_{\alpha, \beta}}.
	\end{align}
	
	We proceed as follows for each of the three intrinsic square operators mentioned above.
	
	{\bf (1)} For the operator \(g_{\alpha}\): Let
	\begin{align}\label{Iso-Kernel 1}
		\mathscr{P}_{g_{\alpha}}(x) := \{ \varphi_t(x) \}_{t>0},
	\end{align}
	where \(\varphi \in {\rm Lip}_{\alpha}\). Further, denote
	\begin{align}\label{Iso-Tf 1}
		\mathscr{A}_{g_{\alpha}}f(x):=\{A_\alpha f(t, x)\}_{t>0}.
	\end{align}
	Here \(\mathscr{P}_{g_{\alpha}}\) is the kernel of \(\mathscr{A}_{g_{\alpha}}\). We then have \(g_{\alpha}f(x)=\left\|\mathscr{A}_{g_{\alpha}}f(x)\right\|_{\mathbb{B}_{g_{\alpha}}}\).
	
	{\bf (2)} For the operator \(g_{\alpha, \beta}\): Take \(P_{z,t}(x)=\mathbb{I}_{ \Gamma_\beta(0)}(z)\varphi_t(x-z)\), where \(\varphi \in {\rm Lip}_{\alpha}\) and \((z,t)\in\mathbb{R}^{n+1}_+\). Define
	\begin{align}\label{Iso-Kernel 2}
		\mathscr{P}_{g_{\alpha, \beta}}(x):=\{P_{z,t}(x)\}_{(z,t)\in\mathbb{R}^{n+1}_+}
	\end{align}
	and 
	\begin{align}\label{Iso-Tf 2}
		\mathscr{A}_{g_{\alpha, \beta}}f(x):=\{A_\alpha f(P_{z,t}, x)\}_{(z,t) \in \mathbb{R}^{n+1}_+}=\sup \left| (P_{z,t} * f)(x) \right|,
	\end{align}
	with \(\mathscr{P}_{g_{\alpha, \beta}}\) being the kernel of \(\mathscr{A}_{g_{\alpha, \beta}}\). Then \(g_{\alpha, \beta}f(x)=\|\mathscr{A}_{g_{\alpha, \beta}}f(x)\|_{\mathbb{B}_{g_{\alpha, \beta}}}\).
	
	{\bf (3)} For the operator \(g^*_{\lambda,\alpha}\):  
	Following a similar procedure, define
	\[
	Q_{z,t}(x)=\left(\frac{t}{t+|z|}\right)^{\frac{n \lambda}{2}} \varphi_t(x-z),
	\quad \varphi \in {\rm Lip}_{\alpha},\,\, (z,t)\in \mathbb{R}^{n+1}_+.
	\]
	Set
	\begin{align}\label{Iso-Kernel 3}
		\mathscr{P}_{g^*_{\lambda,\alpha}}(x) := \{Q_{z,t}(x)\}_{(z,t) \in \mathbb{R}^{n+1}_+}
	\end{align}
	and 
	\begin{align}\label{Iso-Tf 3}
		\mathscr{A}_{g^*_{\lambda,\alpha}}f(x):=\{A_\alpha f(Q_{z,t}, x)\}_{(z,t) \in \mathbb{R}^{n+1}_+}=\sup \left| (Q_{z,t} * f)(x) \right|.
	\end{align}
	Consequently, \(g^*_{\lambda,\alpha}\) can be expressed as \(g^*_{\lambda,\alpha}f(x)=\|\mathscr{A}_{g^*_{\lambda,\alpha}}f(x)\|_{\mathbb{B}_{g^*_{\lambda,\alpha}}}\).
	
	%

	For the remaining part of this subsection, we consider \(\mathfrak{B}_Q\), the set of all cubes in \(\mathbb{R}^n\) aligned with the coordinate axes. According to \eqref{list:R2}, \(\mathfrak{B}_Q\) is a ball-basis on \((\mathbb{R}^n, \mathscr{L}^n)\). Also, every \(Q\in\mathfrak{B}_Q\) satisfies \(Q^\P = 5Q\).

	We begin by noting H\"older's inequality in Orlicz-Morrey spaces.
	\begin{lemma}\cite[Lemma 9.2]{OM-Nakai2008}
		Given a Young function \(\Phi\), a function \(\phi \in \mathcal{G}\), and a set \(Q \in\mathfrak{B}_Q\), the following integral inequality holds:
		\begin{align*}
			\int_{Q} f(x)g(x) \, dx \leq 2|Q|\phi(|Q|)\left\| f\right\|_{\Phi,\phi,Q}\left\|g\right\|_{\widetilde{\Phi},\phi,Q},
		\end{align*}
		where \(\widetilde{\Phi}\) denotes the complementary Young function of \(\Phi\).
	\end{lemma}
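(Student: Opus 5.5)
The statement is Hölder's inequality for the generalized Luxemburg norms on a cube \(Q\in\mathfrak{B}_Q\). I will reduce it to the classical Young inequality \(st\le \Phi(s)+\widetilde{\Phi}(t)\) through normalization. Recall that \(\|f\|_{\Phi,\phi,Q}\) is the infimum of all \(\lambda>0\) with
\[
\frac{1}{|Q|\phi(|Q|)}\int_Q \Phi\Bigl(\frac{|f|}{\lambda}\Bigr)\,dx\le 1 .
\]

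\textbf{Trivial cases.} Without loss of generality replace \(f,g\) by \(|f|,|g|\), since \(\int_Q fg\le\int_Q|f||g|\). If either norm is \(\infty\) there is nothing to prove. If \(\|f\|_{\Phi,\phi,Q}=0\), then \(\int_Q\Phi(|f|/\lambda)\le|Q|\phi(|Q|)\) for every \(\lambda>0\). Letting \(\lambda\to0\), the convexity of \(\Phi\) and \(\Phi(s)\to\infty\) as \(s\to\infty\) force \(f=0\) a.e. on \(Q\), so the left side vanishes. The same argument applies to \(g\).

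\textbf{Main case.} Assume \(0<\|f\|_{\Phi,\phi,Q},\|g\|_{\widetilde\Phi,\phi,Q}<\infty\) and pick any \(\lambda>\|f\|_{\Phi,\phi,Q}\) and \(\eta>\|g\|_{\widetilde\Phi,\phi,Q}\). By monotonicity of \(\Phi\), the modular of \(f/\lambda\) is at most \(|Q|\phi(|Q|)\), and likewise the modular of \(g/\eta\) for \(\widetilde\Phi\). Apply Young's inequality pointwise:
\[
\frac{|f(x)|}{\lambda}\frac{|g(x)|}{\eta}\le \Phi\Bigl(\frac{|f(x)|}{\lambda}\Bigr)+\widetilde\Phi\Bigl(\frac{|g(x)|}{\eta}\Bigr).
\]
Integrating over \(Q\) gives
\[
\int_Q|fg|\,dx\le \lambda\eta\,\bigl(|Q|\phi(|Q|)+|Q|\phi(|Q|)\bigr)=2|Q|\phi(|Q|)\lambda\eta .
\]
Letting \(\lambda\downarrow\|f\|_{\Phi,\phi,Q}\) and \(\eta\downarrow\|g\|_{\widetilde\Phi,\phi,Q}\) yields the claim.

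\textbf{Expected obstacle.} The only delicate point is the infimum in the Luxemburg definition: the infimum itself need not satisfy the modular bound unless one invokes left-continuity of \(\Phi\) and monotone convergence. Working with \(\lambda,\eta\) strictly larger than the norms and passing to the limit avoids this issue. Young's inequality itself holds for any Young function and its complement \(\widetilde\Phi(t)=\sup_s\{st-\Phi(s)\}\) directly from that definition. The cube structure plays no role; the argument works for any measurable set of positive finite measure.
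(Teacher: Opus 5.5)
Your proof is correct and takes the standard route behind the cited Lemma 9.2 of Nakai (the paper does not prove the statement; it only quotes it): take admissible \(\lambda>\|f\|_{\Phi,\phi,Q}\) and \(\eta>\|g\|_{\widetilde{\Phi},\phi,Q}\), apply Young's inequality \(st\le\Phi(s)+\widetilde{\Phi}(t)\) pointwise, integrate the two modular bounds to get the factor \(2|Q|\phi(|Q|)\), and then let \(\lambda,\eta\) decrease to the norms. Your remark that the cube structure plays no role is also what justifies the paper's later use of the same inequality on arbitrary balls \(B\in\mathfrak{B}\) in Section~\ref{Section5}.
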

	
	\begin{definition}
		Let \(\Phi \in \mathscr{Y}\) and \(\phi \in \mathscr{G}\), and let \(\widetilde{\Phi}\) be the complementary function of \(\Phi\). We say that a kernel \(\mathscr{P}\) satisfies the \(\mathbb{B}\)-valued \(\Phi\)-\(\phi\)-H\"ormander condition if
		\begin{align*}
			\sup_{Q \in\mathfrak{B}_Q, \, x \in \frac{1}{2} Q}
			|Q|\phi(|Q|) \Bigl\|\left\| \mathscr{P}(x) \right\|_{\mathbb{B}}\Bigr\|_{\widetilde{\Phi},\phi,2Q \setminus Q} < \infty,
		\end{align*}
		and 
		\begin{align*}
			\sup_{\substack{Q\in\mathfrak{B}_Q \\ x,x' \in \frac{1}{2} Q}} \sum _{j=1}^{\infty} |2^jQ|\phi(|2^jQ|)\Bigl\| \left\| \mathscr{P}(x) - \mathscr{P}(x') \right\|_{\mathbb{B}} \Bigr\|_{ \widetilde{\Phi}, \phi, \widetilde{B}_j(Q)} < \infty.
		\end{align*}
		where \(\widetilde{B}_j(Q) := 2^j Q \setminus 2^{j-1} Q\) for \(j= 1,2,\dots\). Here, \(\mathscr{P}(x)\) takes values as given in (\ref{Iso-Kernel 1}), (\ref{Iso-Kernel 2}), and (\ref{Iso-Kernel 3}), and \(\mathbb{B}\) takes values as in (\ref{U 1}), (\ref{U 2}), and (\ref{U 3}).
	\end{definition}

	\begin{lemma}
		Suppose that \((\Phi,\Psi,\phi,\psi)\in\mathscr{Y} \otimes \mathscr{G}\). Let \(\mathscr{P}(x)\) be the intrinsic square operators satisfying the \(\mathbb{B}\)-valued \(\Phi\)-\(\phi\)-H\"ormander condition, where \(\mathfrak{G}(x)\) is taken from (\ref{G 1}), (\ref{G 2}) and (\ref{G 3}), and \(\mathbb{B}\) is taken from (\ref{U 1}), (\ref{U 2}) and (\ref{U 3}). Then \(\mathfrak{G}(x)\) is a \(\mathbb{B}\)-valued \(\Psi\)-BOO with respect to \(\mathfrak{B}_Q\).
	\end{lemma}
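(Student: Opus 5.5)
The plan is to verify the two defining conditions \eqref{BOO-1} and \eqref{BOO-2} of Definition \ref{def-BOO} directly, with $\mathfrak{B}_Q$ and $Q^\P = 5Q$. The linearizing operator is the $\mathbb{B}$-valued convolution $\mathscr{A}f(x) = \int_{\mathbb{R}^n} \mathscr{P}(x-y) f(y)\,dy$, with $\mathscr{P}$ from \eqref{Iso-Kernel 1}, \eqref{Iso-Kernel 2} or \eqref{Iso-Kernel 3}.

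Because of the supremum over $\varphi \in {\rm Lip}_\alpha$ inside $A_\alpha$, the operator is not literally linear. I would handle this by enlarging $\mathbb{B}$ to an $\ell^\infty({\rm Lip}_\alpha)$-valued version of \eqref{U 1}--\eqref{U 3}. In that space $\mathscr{A}$ is linear and $\mathfrak{G}f(x) = \|\mathscr{A}f(x)\|_{\mathbb{B}}$. The Hörmander condition is then read with this norm.

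The two basic tools are:
\begin{itemize}
\item Minkowski's integral inequality, $\|\int \mathscr{P}(x-y)f(y)\,dy\|_{\mathbb{B}} \le \int \|\mathscr{P}(x-y)\|_{\mathbb{B}}|f(y)|\,dy$;
\item the Orlicz-Morrey Hölder inequality (the lemma from \cite[Lemma 9.2]{OM-Nakai2008} quoted above), applied on annular sets $2Q\setminus Q$ and $\widetilde{B}_j(Q)$. Its proof only uses the normalization $|Q|\phi(|Q|)$, so the same bound holds with the Luxemburg norms taken over those sets.
\end{itemize}

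\textbf{Condition \eqref{BOO-1}.} Given $B_0 = Q_0$ with $5Q_0 \ne \mathbb{R}^n$, take $B = 2Q_0 \supsetneq Q_0$, so that $B^\P = 10Q_0$. For $x \in Q_0$,
\[
\mathscr{A}(f\mathbf{1}_{10Q_0})(x) - \mathscr{A}(f\mathbf{1}_{5Q_0})(x) = \int_{10Q_0\setminus 5Q_0} \mathscr{P}(x-y)f(y)\,dy .
\]
Every $y$ in this region satisfies $|x-y|_\infty \ge 2\ell(Q_0)$. Hence the region is covered by a bounded number (independent of $Q_0$) of annuli $2Q'\setminus Q'$, where $Q'$ is centered at $x$ with $x \in \tfrac12 Q'$ and $\ell(Q') \simeq \ell(Q_0)$.

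On each annulus, Hölder's inequality and the first Hörmander bound give a contribution $\lesssim \|f\|_{\Phi,\phi,2Q'\cap 10Q_0}$. The $\mathscr{G}$-properties of $\phi$ (monotonicity of $\phi$ and of $t\phi(t)$, plus doubling) let us pass from $2Q'\cap 10Q_0$ to $10Q_0 = B^\P$ at the cost of a constant. This gives \eqref{BOO-1}.

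\textbf{Condition \eqref{BOO-2}.} For $x, x' \in Q$,
\[
\bigl(\mathscr{A}f - \mathscr{A}(f\mathbf{1}_{5Q})\bigr)(x) - \bigl(\mathscr{A}f - \mathscr{A}(f\mathbf{1}_{5Q})\bigr)(x') = \int_{\mathbb{R}^n\setminus 5Q} \bigl(\mathscr{P}(x-y) - \mathscr{P}(x'-y)\bigr) f(y)\,dy .
\]
Let $Q' = 2Q$, so $x, x' \in \tfrac12 Q'$, and split $\mathbb{R}^n\setminus 5Q \subset \bigcup_{j\ge 2} \widetilde{B}_j(Q')$. By Minkowski and Hölder on each $\widetilde{B}_j(Q')$, the $j$-th term is bounded by
\[
2\,|2^jQ'|\,\phi(|2^jQ'|)\,\bigl\|\|\mathscr{P}(x-\cdot)-\mathscr{P}(x'-\cdot)\|_{\mathbb{B}}\bigr\|_{\widetilde{\Phi},\phi,\widetilde{B}_j(Q')}\,\|f\|_{\Phi,\phi,2^jQ'} .
\]
Since $2^jQ' \supset Q$, we have $\|f\|_{\Phi,\phi,2^jQ'} \le \langle\|f\|\rangle_{\Phi,\phi,Q}$. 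Summing over $j$ with the second Hörmander condition gives \eqref{BOO-2} with $\mathscr{C}_2$ equal to that supremum.

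\textbf{Main obstacle.} I expect the delicate points to be twofold:
\begin{itemize}
\item justifying Hölder's inequality and the comparison of Luxemburg norms on the non-cube sets (annuli, and $2Q'\cap 10Q_0$) without losing more than $\mathscr{G}$-class doubling constants;
\item making the linearization of the supremum over ${\rm Lip}_\alpha$ rigorous, including measurability in the enlarged $\mathbb{B}$.
\end{itemize}
For the first point I would first try to compare each set directly with its containing cube, using that $t\phi(t)$ is almost increasing.
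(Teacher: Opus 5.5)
The paper does not prove this lemma. Section \ref{Section2} only says that these operators were shown in \cite{SZ2026} to be BOOs, so there is no in-text argument to compare with.

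Your direct verification is correct and follows the standard route.

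\textbf{Condition \eqref{BOO-1}.} Choosing $B=2Q_0$, with hull $10Q_0$, places $10Q_0\setminus 5Q_0$ at distance comparable to $\ell(Q_0)$ from every $x\in Q_0$. Two annuli centred at $x$, together with the first H\"ormander bound, then give \eqref{BOO-1}.

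\textbf{Condition \eqref{BOO-2}.} Splitting $\mathbb{R}^n\setminus 5Q$ into the annuli $\widetilde{B}_j(2Q)$, $j\ge 2$, and using $\|f\|_{\Phi,\phi,2^{j+1}Q}\le\langle\|f\|\rangle_{\Phi,\phi,Q}$ with the second H\"ormander sum gives \eqref{BOO-2}.

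\textbf{Normalisations.} The mismatches between annuli and cubes cost only the $\mathscr{G}$-constants of $\phi$. Young's inequality gives the H\"older bound on any measurable set with any normalisation. Moreover, $|\widetilde{B}_j(Q)|\simeq|2^jQ|$, and convexity of $\Phi$ makes Luxemburg norms with comparable normalisations comparable.

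\textbf{Linearisation.} Your linearisation step is a necessary repair, not a side issue. Because $A_\alpha$ contains a supremum over ${\rm Lip}_\alpha$, the paper's $\mathscr{A}_{g_\alpha}$, $\mathscr{A}_{g_{\alpha,\beta}}$ and $\mathscr{A}_{g^*_{\lambda,\alpha}}$ are not linear. So the representation required in Definition \ref{def-BOO}, with $\mathbb{B}$ as in \eqref{U 1}--\eqref{U 3}, is not available as written. What you actually prove is the lemma with two changes:
\begin{itemize}
\item $\mathbb{B}$ is replaced by its $\ell^\infty({\rm Lip}_\alpha)$-valued enlargement;
\item the H\"ormander condition is read in that enlarged norm.
\end{itemize}
This is the sensible reading of the statement, and for the concrete intrinsic kernels it is no stronger in practice.

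Your measurability worry is easily settled. ${\rm Lip}_\alpha$ is relatively compact, hence separable, in the uniform norm by Arzel\`a--Ascoli. Also, $\varphi\mapsto(\varphi_t*f)(x)$ is Lipschitz in that norm, with constant $t^{-n}\int_{B(x,t)}|f|$. So every supremum over ${\rm Lip}_\alpha$ may be taken over a countable dense subfamily.
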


	A direct application of Theorem \ref{main} and Theorem \ref{main'} gives the following.
	
	\begin{theorem}\label{example-intrinsic}
		Let \(\mathfrak{B}_E\) be a ball-basis in \(\mathbb{R}^n\) and let \(\lambda > 3\mathscr{C}_0^6\). Suppose that \((\Phi,\Psi,\phi,\psi) \in \mathscr{Y} \otimes \mathscr{G}\) and \((\Phi, \phi) \in E_{mb}\). Denote by \(\mathfrak{G}(x)\) the object defined in (\ref{G 1}), (\ref{G 2}) and (\ref{G 3}), and by \(\mathbb{B}\) the one defined in (\ref{U 1}), (\ref{U 2}) and (\ref{U 3}). We then have the following:
		
		\begin{list}{\rm (\theenumi)}{\usecounter{enumi}\leftmargin=1.2cm \labelwidth=1cm \itemsep=0.2cm \topsep=.2cm \renewcommand{\theenumi}{\alph{enumi}}}
			\item There exist two \(\frac{1}{2 \mathscr{C}_0^3}\)-sparse families \(\mathcal{S}_1, \, \mathcal{S}_2 \subset \mathfrak{B}_E\) such that for every \(f \in L^{(\Psi,\psi)}(\mathbb{R}^n)\subset L^{(\Phi,\phi)}(\mathbb{R}^n)\), every \(B \in \mathfrak{B}_E\), and \(a.e. \, x \in B\),
			\begin{align*}
				\|\mathscr{A} f(x)\|_\mathbb{B} \lesssim \left(\mathscr{C}_1(\mathfrak{G}) + \mathscr{C}_2(\mathfrak{G}) + \|\mathfrak{G}\|_{L^{(\Psi, \psi)}(\mathbb{R}^n) \to wL^{(\Psi,\psi)}(\mathbb{R}^n)} \right) \cdot \left[ \mathcal{A}_{\mathcal{S}_1, \Phi,\phi} f(x) + \mathcal{A}_{\mathcal{S}_2, \Phi,\phi} f(x) \right],
			\end{align*}
			and
			\begin{align*}
				\|[\mathscr{A}, b] f(x)\|_\mathbb{B} \lesssim \left(\mathscr{C}_1(\mathfrak{G}) + \mathscr{C}_2(\mathfrak{G}) + \|\mathfrak{G}\|_{L^{(\Psi, \psi)}(\mathbb{R}^n) \to wL^{(\Psi,\psi)}(\mathbb{R}^n)} \right) \cdot \left[ \mathcal{A}^b_{\mathcal{S}_1, \Phi,\phi} f(x) + \mathcal{A}^b_{\mathcal{S}_2, \Phi,\phi} f(x) \right].
			\end{align*}
			where \(\mathscr{A}f(x)\) is taken from (\ref{Iso-Tf 1}), (\ref{Iso-Tf 2}) and (\ref{Iso-Tf 3}).

			\item If, in addition, \(\mathfrak{B}_E\) satisfies the Besicovitch \(\mathfrak{N}\)-condition, then
			\begin{align*}
				\|\mathfrak{G}\|_{L^{(\Psi, \psi)}(\omega) \to L^{(\Psi,\psi)}(\omega)} \lesssim \mathscr{K} \, \mathfrak{N} \, \left(\mathscr{C}_1(\mathfrak{G}) + \mathscr{C}_2(\mathfrak{G}) + \|\mathfrak{G}\|_{L^{(\Psi, \psi)}(\mathbb{R}^n) \to wL^{(\Psi,\psi)}(\mathbb{R}^n)} \right).
			\end{align*}
			and
			\begin{align*}
				\|[\mathfrak{G}, b]\|_{L^{(\Psi, \psi)}(\omega) \to L^{(\Psi,\psi)}(\omega)} \lesssim \mathscr{K} \, \mathfrak{N} \, \left(\mathscr{C}_1(\mathfrak{G}) + \mathscr{C}_2(\mathfrak{G}) + \|\mathfrak{G}\|_{L^{(\Psi, \psi)}(\mathbb{R}^n) \to wL^{(\Psi,\psi)}(\mathbb{R}^n)} \right) \|b\|_{\mathfrak{L}^1_\psi(\omega)}.
			\end{align*}
		\end{list}
	\end{theorem}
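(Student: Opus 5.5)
The statement is Theorem \ref{example-intrinsic}, and it is a corollary of Theorem \ref{main} and Theorem \ref{main'}. The plan is to check the hypotheses of these two general theorems for each of the three operators $g_\alpha$, $g_{\alpha,\beta}$, $g^*_{\lambda,\alpha}$ and then read off the conclusions.

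We work with the linearizations already set up: $\mathfrak{G}f(x)=\|\mathscr{A}f(x)\|_{\mathbb{B}}$, where $\mathscr{A}$ is given by (\ref{Iso-Tf 1}), (\ref{Iso-Tf 2}) or (\ref{Iso-Tf 3}) and $\mathbb{B}$ by (\ref{U 1}), (\ref{U 2}) or (\ref{U 3}). The steps are as follows.

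\begin{enumerate}
\item \textbf{Verify the $\Psi$-BOO property.} By the preceding lemma on intrinsic square operators, $\mathfrak{G}$ is a $\mathbb{B}$-valued $\Psi$-BOO with respect to $\mathfrak{B}_Q$. This uses the $\mathbb{B}$-valued $\Phi$-$\phi$-H\"ormander condition together with the Orlicz-Morrey H\"older inequality (\cite[Lemma 9.2]{OM-Nakai2008}). The constants $\mathscr{C}_1(\mathfrak{G})$ and $\mathscr{C}_2(\mathfrak{G})$ are controlled by the two H\"ormander suprema. Condition (\ref{BOO-1}) comes from the first supremum on the annulus $B^\P\setminus B_0^\P$. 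Condition (\ref{BOO-2}) comes from the dyadic annuli $\widetilde{B}_j(Q)$, using $\langle\|f\|\rangle_{\Phi,\phi,B}\ge \|f\|_{\Phi,\phi,2^jQ}$.

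\item \textbf{Transfer from cubes to balls.} The statement is phrased with $\mathfrak{B}_E$, while the lemma gives the $\Psi$-BOO property for $\mathfrak{B}_Q$. Every Euclidean ball lies in a concentric cube of comparable measure and conversely. The Luxemburg norms $\|\cdot\|_{\Phi,\phi,\cdot}$ on comparable sets are equivalent, by doubling of $\phi\in\mathscr{G}$ and convexity of $\Phi$. Hence (\ref{BOO-1}) and (\ref{BOO-2}) persist on $\mathfrak{B}_E$, with constants changed only by dimensional factors. Alternatively one can run everything on $\mathfrak{B}_Q$ with $Q^\P=5Q$, which is itself a ball-basis by (\ref{list:R2}). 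In either case $\mathscr{C}_0$ is a dimensional constant, so $\lambda>3\mathscr{C}_0^6$ is meaningful.

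\item \textbf{Apply Theorem \ref{main}.} The weak-type assumption $\mathfrak{G}\in\mathbb{W}_{\Psi,\psi,\lambda}$ is implicit in the hypotheses, as in Theorem \ref{example-CZ}, and we take it as given. Together with $(\Phi,\Psi,\phi,\psi)\in\mathscr{Y}\otimes\mathscr{G}$ and $(\Phi,\phi)\in E_{mb}$, Theorem \ref{main} then yields two $\frac{1}{2\mathscr{C}_0^3}$-sparse families $\mathcal{S}_1,\mathcal{S}_2$. It gives the pointwise bounds (\ref{ine main1}) and (\ref{ine main2}) with $\mathscr{C}(\mathfrak{G})=\mathscr{C}_1(\mathfrak{G})+\mathscr{C}_2(\mathfrak{G})+\|\mathfrak{G}\|_{L^{(\Psi,\psi)}\to wL^{(\Psi,\psi)}}$. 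For the commutator, the linear operator $\mathscr{A}$ plays the role of $\mathscr{T}$, so $[\mathscr{A},b]f=\mathscr{A}(bf)-b\,\mathscr{A}f$ is the object bounded in (\ref{ine main2}). This gives part (a).

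\item \textbf{Apply Theorem \ref{main'}.} Under the Besicovitch $\mathfrak{N}$-condition, Theorem \ref{main'} gives (\ref{ine main'1}) and (\ref{ine main'2}) for $\omega\in A_p^{\mathfrak{B}}$ and $b\in\mathfrak{L}^1_\psi(\omega)$. For the commutator we use the pointwise inequality $[\mathfrak{G},b]f\le \|[\mathscr{A},b]f\|_{\mathbb{B}}$, valid for the sublinear operator. This gives part (b).
\end{enumerate}

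The main obstacle is Step 1 for $g^*_{\lambda,\alpha}$, specifically the (\ref{BOO-2}) estimate. There the kernel $Q_{z,t}$ is not compactly supported in $z$. The difference $\|\mathscr{P}(x)-\mathscr{P}(x')\|_{\mathbb{B}}$ must be summed over annuli, which requires the decay factor $(t/(t+|z|))^{n\lambda/2}$ with $\lambda>3+2\alpha/n$ to beat the growth of $|2^jQ|\phi(|2^jQ|)$. I would take this summability directly from the $\mathbb{B}$-valued $\Phi$-$\phi$-H\"ormander hypothesis, as the preceding lemma does, rather than re-derive it.
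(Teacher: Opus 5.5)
Your proposal follows the paper's route. The paper's whole deduction is to invoke the preceding lemma, that \(\mathfrak{G}\) is a \(\mathbb{B}\)-valued \(\Psi\)-BOO with respect to \(\mathfrak{B}_Q\), and then apply Theorems \ref{main} and \ref{main'} directly, tacitly assuming \(\mathfrak{G}\in\mathbb{W}_{\Psi,\psi,\lambda}\) just as you do. Your Step 2 has no counterpart in the paper, and of its two options only working directly on \(\mathfrak{B}_Q\) is supported by the lemma as stated: comparability of Luxemburg norms on balls and cubes does not by itself transfer \eqref{BOO-1}--\eqref{BOO-2} to \(\mathfrak{B}_E\), because those conditions involve \(\mathscr{T}\) applied to truncations at the hulls \(B^\P\), which change with the basis, so that route would require going back to the H\"ormander conditions.
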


	\subsection{Carleson-type operators on measure spaces}\label{Section2.4}~~

	Let \((X, \, \mathfrak{M}, \, \mu)\) be a measure space endowed with a ball-basis \(\mathfrak{B}\). Consider a family of \(\Psi\)-BOOs \(\{T_{\alpha}\}_{\alpha \in \mathfrak{A}}\) on \((X, \, \mathfrak{M}, \, \mu)\), where \(\mathfrak{A}\) is an index set. We can define a Carleson-type operator of the form
	\[
	T^{\mathfrak{A}}f(x):=\sup_{\alpha \in \mathfrak{A}} \left| T_{\alpha}f(x) \right|.
	\]
	Here, we define a Banach space as follows:  
	\[
	\mathbb{B}:=\left\{ F: \mathfrak{A} \to \mathbb{R} \;\big|\; \left\|f\right\|_{\mathbb{B}} := \sup_{\alpha \in \mathfrak{A}} \left|f(\alpha)\right| < \infty \right\}.
	\]
	Then, according to Definition \ref{def-BOO}, the following theorem holds trivially.

	\begin{theorem}
		Let \((\Phi,\Psi,\phi,\psi)\in\mathscr{Y} \otimes \mathscr{G}\) and let \((X,\,\mathfrak{M},\,\mu)\) be a measure space endowed with a ball-basis \(\mathfrak{B}\). Suppose that \(\{T_{\alpha}\}_{\alpha \in \mathfrak{A}}\) is a family of \(\Psi\)-BOOs satisfying  
		\begin{align}\label{Carleson-type-C}
			\mathscr{C}_1^{\mathfrak{A}} := \sup_{\alpha \in \mathfrak{A}} \mathscr{C}_1 (T_{\alpha}) < \infty \quad \text{and} \quad \mathscr{C}_2^{\mathfrak{A}} := \sup_{\alpha \in \mathfrak{A}}\mathscr{C}_2(T_{\alpha}) < \infty.
		\end{align}
		Then \(T^{\mathfrak{A}}\) is a \(\mathbb{B}\)-valued \(\Psi\)-BOO with respect to \(\mathfrak{B}\), and its constants satisfy  
		\[
		\mathscr{C}_1(T^{\mathfrak{A}}) \le \mathscr{C}_1^{\mathfrak{A}} \quad \text{and} \quad \mathscr{C}_2(T^{\mathfrak{A}}) \le \mathscr{C}_2^{\mathfrak{A}} .
		\]
	\end{theorem}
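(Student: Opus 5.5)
We realize $T^{\mathfrak{A}}$ as the norm of a single $\mathbb{B}$-valued linear operator and then verify \eqref{BOO-1} and \eqref{BOO-2} by taking suprema over $\alpha$ of the corresponding inequalities for each $T_\alpha$. Concretely, for each $\alpha\in\mathfrak{A}$ let $\mathscr{T}_\alpha$ be the real-valued linear operator with $T_\alpha f=|\mathscr{T}_\alpha f|$. (If $T_\alpha$ is itself $\mathbb{B}_\alpha$-valued, one enlarges $\mathbb{B}$ to $\ell^\infty$ of the $\mathbb{B}_\alpha$-norms; the argument is identical.) Define
\[
\mathscr{T}^{\mathfrak{A}}f(x):=\{\mathscr{T}_\alpha f(x)\}_{\alpha\in\mathfrak{A}}.
\]
This is linear in $f$ because each $\mathscr{T}_\alpha$ is linear, and $\|\mathscr{T}^{\mathfrak{A}}f(x)\|_{\mathbb{B}}=\sup_\alpha|T_\alpha f(x)|=T^{\mathfrak{A}}f(x)$. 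The identity holds at every $x$ where $T^{\mathfrak{A}}f(x)<\infty$, and that is the setting in which the statement is meaningful.

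For \eqref{BOO-2}, fix $B\in\mathfrak{B}$ and $x,x'\in B$. Since the $\mathbb{B}$-norm is a supremum over $\alpha$ of coordinatewise absolute values,
\[
\bigl\|(\mathscr{T}^{\mathfrak{A}}f-\mathscr{T}^{\mathfrak{A}}(f\mathbf{1}_{B^\P}))(x)-(\cdots)(x')\bigr\|_{\mathbb{B}}
=\sup_{\alpha}\bigl|(\mathscr{T}_\alpha f-\mathscr{T}_\alpha(f\mathbf{1}_{B^\P}))(x)-(\cdots)(x')\bigr|.
\]
Each term in the supremum is at most $\mathscr{C}_2(T_\alpha)\langle\|f\|\rangle_{\Phi,\phi,B}\le\mathscr{C}_2^{\mathfrak{A}}\langle\|f\|\rangle_{\Phi,\phi,B}$. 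Taking the supremum over $\alpha$ and then over $x,x'$ gives $\mathscr{C}_2(T^{\mathfrak{A}})\le\mathscr{C}_2^{\mathfrak{A}}$.

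For \eqref{BOO-1} the same argument works, with one point needing care, and this is where the main difficulty lies. The definition lets the enlarged ball $B\supsetneq B_0$ depend on the operator, so a priori each $T_\alpha$ supplies its own ball $B_\alpha$. A supremum over $\alpha$ then requires a single ball valid for all $\alpha$ at once.

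Our first approach is to read the definition, as is done in Karagulyan's framework and in \cite{SZ2026}, with $B$ a fixed ball determined by $B_0$ and the basis alone. An example is a minimal proper enlargement of $B_0$, or $B=B_0^\P$ when that choice is admissible. With this reading all the $T_\alpha$ share the same $B$, and
\[
\sup_{x\in B_0}\bigl\|\mathscr{T}^{\mathfrak{A}}(f\mathbf{1}_{B^\P})(x)-\mathscr{T}^{\mathfrak{A}}(f\mathbf{1}_{B_0^\P})(x)\bigr\|_{\mathbb{B}}
=\sup_{x\in B_0}\sup_{\alpha}\bigl|\mathscr{T}_\alpha(f\mathbf{1}_{B^\P})(x)-\mathscr{T}_\alpha(f\mathbf{1}_{B_0^\P})(x)\bigr|.
\]
This is at most $\mathscr{C}_1^{\mathfrak{A}}\|f\|_{\Phi,\phi,B^\P}$, which gives $\mathscr{C}_1(T^{\mathfrak{A}})\le\mathscr{C}_1^{\mathfrak{A}}$.

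If instead the balls genuinely depend on $\alpha$, we would impose this uniformity as part of the hypothesis of the theorem. In that case the statement is to be understood with the family $\{T_\alpha\}$ sharing the enlargement balls, consistent with the word ``trivially'' in the text.

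Finally, we note that both conditions only need to be verified for $f\in L^{(\Psi,\psi)}(X)$. No weak-type or measurability issues enter for \eqref{BOO-1} and \eqref{BOO-2}, because these are pointwise suprema of pointwise bounds.
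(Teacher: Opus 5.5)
Your argument is the one the paper has in mind. The paper gives no proof beyond saying the result ``holds trivially'' from Definition \ref{def-BOO}: realize \(T^{\mathfrak{A}}\) through \(\mathscr{T}^{\mathfrak{A}}f=\{\mathscr{T}_\alpha f\}_{\alpha}\in\ell^\infty(\mathfrak{A})\) and take the supremum over \(\alpha\). Your verification of \eqref{BOO-2} is complete, and you have found exactly the point the paper passes over in \eqref{BOO-1}.

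Neither of your two options proves the statement as written. In Definition \ref{def-BOO} the ball \(B\supsetneq B_0\) is existentially quantified separately for each operator, and after \(f\) is fixed. Reading it as determined by \(B_0\) alone therefore strengthens the definition, so option (a) is the same extra hypothesis as option (b): a witness ball common to all \(\alpha\). That hypothesis cannot be dropped, because the statement is false without it.

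Take \(X=\mathbb{R}\) and \(\mathfrak{B}=\mathfrak{B}_E\) with \(B^\P=5B\). Take \(\Phi=\Psi=t^2\) and \(\phi=\psi\equiv 1\), so that \(\|f\|_{\Phi,\phi,B}=(|B|^{-1}\int_B|f|^2)^{1/2}\) and \(L^{(\Psi,\psi)}=L^\infty\). Set
\[
E_k=\bigl[5+5\cdot 2^{-k},\ 5+5\cdot 2^{-k}+4^{-k}\bigr],\qquad \mathscr{T}_kf(x):=4^{k}\int_{E_k}f\,d\mu\quad(\text{independent of }x).
\]
Each \(T_k\) satisfies \eqref{BOO-2} with zero oscillation. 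It satisfies \eqref{BOO-1} with \(\mathscr{C}_1(T_k)\le 1\): given \(B_0'\), enlarge its radius by \(\delta=\delta(k,B_0')\) so small that \(4^k|5B\setminus 5B_0'|^{1/2}|5B|^{1/2}\le 1\), and apply Cauchy--Schwarz. Moreover \(\|\mathscr{T}^{\mathfrak{A}}f(x)\|_{\ell^\infty}\le\|f\|_{L^\infty}\), so \(T^{\mathfrak{A}}\) is a genuine \(\ell^\infty\)-valued linear operator, even bounded on \(L^{(\Psi,\psi)}\).

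Now fix \(B_0=[-1,1]\). Every ball \(B\supsetneq B_0\) has radius \(r>1\), hence \(5B\supset[-(4r+1),4r+1]\supset E_k\) for all large \(k\), while \(E_k\cap 5B_0=\varnothing\). Test with \(f_m=\mathbf{1}_{\bigcup_{k\ge m}E_k}\). The left side of \eqref{BOO-1} for \(T^{\mathfrak{A}}\) is at least \(4^k|E_k|=1\), whereas \(\|f_m\|_{\Phi,\phi,5B}\le(\sum_{k\ge m}4^{-k}/10)^{1/2}\to 0\). So no witness ball and no constant work, even if \(B\) may depend on \(f\).

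What does survive is that witnesses can always be shrunk. Suppose \(B_\alpha\) is an \(f\)-independent witness for \(T_\alpha\) at \(B_0\), and \(B_0\subsetneq B\subset B_\alpha\). Apply \eqref{BOO-1} for \(T_\alpha\) to \(f\mathbf{1}_{B^\P}\), and use \(B^\P\subset B_\alpha^\P\) together with Lemma \ref{lemma:AB}\eqref{AB-2} (with \(\omega\equiv 1\)). This shows \(B\) is a witness for \(T_\alpha\) with constant \(\max\{1,C_{ic}\}\,\mathscr{C}_1(T_\alpha)\).

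Hence a common witness exists whenever the \(B_\alpha\) contain a common proper enlargement of \(B_0\). This happens, for example, in martingale bases, where \(\mathscr{F}(B_0)\) lies in every ball strictly containing \(B_0\). One then gets \(\mathscr{C}_1(T^{\mathfrak{A}})\le\max\{1,C_{ic}\}\,\mathscr{C}_1^{\mathfrak{A}}\) rather than \(\mathscr{C}_1^{\mathfrak{A}}\). In the example above the admissible witnesses shrink to \(B_0\), which is exactly why this fails.

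Two further hypotheses are implicit in your write-up and should be stated rather than assumed. First, every \(T_\alpha\) must come with a \emph{linear} \(\mathscr{T}_\alpha\); Definition \ref{def-BOO} allows real-valued sublinear \(T_\alpha\), and then \(\mathscr{T}^{\mathfrak{A}}\) is not linear. Second, you need \(\sup_\alpha|T_\alpha f(x)|<\infty\), so that \(\mathscr{T}^{\mathfrak{A}}f(x)\in\mathbb{B}\) at all.
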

	
	Combining Theorem \ref{main} and Theorem \ref{main'}, we arrive at the following result.
	\begin{theorem}\label{example-Carleson}
		Let \((X,\,\mathfrak{M},\,\mu)\) be a measure space endowed with a ball-basis \(\mathfrak{B}\) and let \(\lambda > 3\mathscr{C}_0^6\). Suppose that \((\Phi,\Psi,\phi,\psi)\in\mathscr{Y} \otimes \mathscr{G}\) and \((\Phi, \phi) \in E_{mb}\). Let \(\{T_{\alpha}\}_{\alpha \in \mathfrak{A}}\) be a family of \(\Psi\)-BOOs satisfying (\ref{Carleson-type-C}), and \(T^{\mathfrak{A}} \in \mathbb{W}_{\Psi, \psi, \lambda}\) and \(b \in \mathfrak{L}^1_\psi(\omega)\). The following assertions are valid:
		\begin{list}{\rm (\theenumi)}{\usecounter{enumi}\leftmargin=1.2cm \labelwidth=1cm \itemsep=0.2cm \topsep=.2cm \renewcommand{\theenumi}{\alph{enumi}}}
			\item There exist two \(\frac{1}{2 \mathscr{C}_0^3}\)-sparse families \(\mathcal{S}_1, \, \mathcal{S}_2 \subset \mathfrak{B}\) such that for every \(f \in L^{(\Psi,\psi)}(X)\subset L^{(\Phi,\phi)}(X)\), every \(B \in \mathfrak{B}\), and \(a.e. \, x \in B\),
			\begin{align*}
				|T^{\mathfrak{A}} f(x)| \lesssim \left(\mathscr{C}_1(T^{\mathfrak{A}}) + \mathscr{C}_2(T^{\mathfrak{A}}) + \|T^{\mathfrak{A}}\|_{L^{(\Psi, \psi)}(\mathbb{R}^n) \to wL^{(\Psi,\psi)}(\mathbb{R}^n)} \right) \cdot \left[ \mathcal{A}_{\mathcal{S}_1, \Phi,\phi} f(x) + \mathcal{A}_{\mathcal{S}_2, \Phi,\phi} f(x) \right],
			\end{align*}
			and
			\begin{align*}
				|[T^{\mathfrak{A}}, b]f(x)| \lesssim \left(\mathscr{C}_1(T^{\mathfrak{A}}) + \mathscr{C}_2(T^{\mathfrak{A}}) + \|T^{\mathfrak{A}}\|_{L^{(\Psi, \psi)}(\mathbb{R}^n) \to wL^{(\Psi,\psi)}(\mathbb{R}^n)} \right) \cdot \left[ \mathcal{A}^b_{\mathcal{S}_1, \Phi,\phi} f(x) + \mathcal{A}^b_{\mathcal{S}_2, \Phi,\phi} f(x) \right].
			\end{align*}

			\item If, furthermore, \(\mathfrak{B}\) satisfies the Besicovitch \(\mathfrak{N}\)-condition, then
			\begin{align*}
				\|T^{\mathfrak{A}}\|_{L^{(\Psi, \psi)}(\omega) \to L^{(\Psi,\psi)}(\omega)} \lesssim \mathscr{K} \, \mathfrak{N} \, \left(\mathscr{C}_1(T^{\mathfrak{A}}) + \mathscr{C}_2(T^{\mathfrak{A}}) + \|T^{\mathfrak{A}}\|_{L^{(\Psi, \psi)}(\mathbb{R}^n) \to wL^{(\Psi,\psi)}(\mathbb{R}^n)} \right),
			\end{align*}
			and
			\begin{align*}
				\|[T^{\mathfrak{A}}, b]\|_{L^{(\Psi, \psi)}(\omega) \to L^{(\Psi,\psi)}(\omega)} \lesssim \mathscr{K} \, \mathfrak{N} \, \left(\mathscr{C}_1(T^{\mathfrak{A}}) + \mathscr{C}_2(T^{\mathfrak{A}}) + \|T^{\mathfrak{A}}\|_{L^{(\Psi, \psi)}(\mathbb{R}^n) \to wL^{(\Psi,\psi)}(\mathbb{R}^n)} \right) \|b\|_{\mathfrak{L}^1_\psi(\omega)}.
			\end{align*}
		\end{list}
	\end{theorem}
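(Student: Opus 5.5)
The statement just above, Theorem \ref{example-Carleson}, follows from Theorem \ref{main} and Theorem \ref{main'} once $T^{\mathfrak{A}}$ is placed in the abstract framework. The plan has two stages: first verify that $T^{\mathfrak{A}}$ meets every hypothesis of those theorems, then read off the conclusions.

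For the first stage, I linearize $T^{\mathfrak{A}}$ through the Banach space $\mathbb{B}=\ell^\infty(\mathfrak{A})$ introduced above. Set
\[
\mathscr{T}f(x):=\{T_\alpha f(x)\}_{\alpha\in\mathfrak{A}},
\]
so that $T^{\mathfrak{A}}f(x)=\|\mathscr{T}f(x)\|_{\mathbb{B}}$. When each $T_\alpha$ is only sublinear rather than linear, I would instead linearize each $T_\alpha$ by its own $\mathbb{B}_\alpha$-valued operator and take the $\ell^\infty$-sum of these spaces. The $\Psi$-BOO property then comes from the preceding theorem, which gives $\mathscr{C}_1(T^{\mathfrak{A}})\le\mathscr{C}_1^{\mathfrak{A}}$ and $\mathscr{C}_2(T^{\mathfrak{A}})\le \mathscr{C}_2^{\mathfrak{A}}$.

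The one point I must check concerns condition \eqref{BOO-1}. There, the enlarged ball $B\supsetneq B_0$ may a priori depend on $\alpha$, and a single ball has to serve every $\alpha$ at once. I will handle this in one of two ways. Either I use that in the standard constructions the ball $B$ is chosen from $B_0$ and $\mathfrak{B}$ alone (for instance, a ball of minimal comparable measure containing $B_0$), or I read the preceding theorem as already asserting uniformity in $\alpha$. Once a common $B$ is fixed, taking the supremum over $\alpha$ inside the $\mathbb{B}$-norm gives both conditions immediately, since each inequality holds for every $\alpha$ with a bound independent of $\alpha$.

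For the second stage, the remaining hypotheses are all assumed directly: $T^{\mathfrak{A}}\in\mathbb{W}_{\Psi,\psi,\lambda}$ (which includes the weak boundedness $L^{(\Psi,\psi)}\to wL^{(\Psi,\psi)}$), $(\Phi,\Psi,\phi,\psi)\in\mathscr{Y}\otimes\mathscr{G}$, $(\Phi,\phi)\in E_{mb}$, and $\lambda>3\mathscr{C}_0^6$. Applying Theorem \ref{main} yields the two $\frac{1}{2\mathscr{C}_0^3}$-sparse families $\mathcal{S}_1,\mathcal{S}_2$ together with \eqref{ine main1} and \eqref{ine main2}. Because $|T^{\mathfrak{A}}f|=\|\mathscr{T}f\|_{\mathbb{B}}$, and because the commutator satisfies
\[
|[T^{\mathfrak{A}},b]f|\le\|[\mathscr{T},b]f\|_{\mathbb{B}}
\]
(with equality when $[T^{\mathfrak{A}},b]$ is understood through the linearization), part (a) follows. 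Under the additional Besicovitch $\mathfrak{N}$-condition, Theorem \ref{main'} gives part (b) directly, with constant $\mathscr{C}(T^{\mathfrak{A}})\,\mathscr{K}\,\mathfrak{N}$, multiplied by $\|b\|_{\mathfrak{L}^1_\psi(\omega)}$ in the commutator estimate.

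I expect the main obstacle to be this uniform choice of the ball in \eqref{BOO-1}, together with the measurability of the supremum over a possibly uncountable index set $\mathfrak{A}$. For measurability, I would first reduce to countable subfamilies of $\mathfrak{A}$ and then pass to the limit by monotone convergence; this works because every bound obtained is independent of the subfamily.
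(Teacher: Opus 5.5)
Your proposal follows the paper's argument. The paper likewise realizes $T^{\mathfrak{A}}$ as the $\ell^\infty(\mathfrak{A})$-valued operator $f\mapsto\{T_\alpha f\}_{\alpha}$, takes its $\Psi$-BOO property from the preceding theorem (which it calls trivial, tacitly assuming the common ball in \eqref{BOO-1} that you rightly flag), and reads off (a) and (b) from Theorems \ref{main} and \ref{main'}.

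One correction: the pointwise bound $|[T^{\mathfrak{A}},b]f|\le\|[\mathscr{T},b]f\|_{\mathbb{B}}$ holds only at points where $b\ge 0$. For $b\equiv-1$ the left side is $2T^{\mathfrak{A}}f$ while the right side vanishes, and so does $\|b\|_{\mathfrak{L}^1_\psi(\omega)}$. So the commutator must be read through the linearization, as the paper does in the proof of Theorem \ref{main}.
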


	\section{Preliminaries}\label{Section3}
	
	\subsection{Generalized mean oscillation}\label{Section3.1}~
	
	For a measurable subset \(\Omega \subset X\) and a measurable function \(f\), together with a weight \(\omega\) and a positive number \(t\), we set
	\begin{align}\label{2.1}
		\omega(\Omega,f,t) = \omega(\{x \in \Omega:|f(x)|>t\}).
	\end{align}
	When \(\Omega = X\), we abbreviate \(\omega(f, t)\) for \(\omega(\Omega, f, t)\).

	Next, we introduce two variants of the Luxemburg norm. Let \((X, \mathfrak{M}, \mu)\) be a measure space endowed with a ball-basis \(\mathfrak{B}\). For a Young function \(\Phi \in \mathscr{Y}\), a function \(\phi \in \mathscr{G}\), and a ball \(B \in \mathfrak{B}\), we define
	\begin{align}
		\label{2.2} \|f\|_{\Phi, \phi, \omega, B} = & \inf \left\{ \lambda > 0 : \frac{1}{\omega(B) \phi (\omega(B))} \int_{B} \Phi \left( \frac{|f(x)|}{\lambda}\right) \omega(x) d\mu(x) \leq 1 \right\}, \\
		\label{2.3} \|f\|_{\Phi, \phi, \omega, B, \text{weak}} = & \inf \left\{ \lambda > 0 : \sup_{t>0} \frac{t \omega(B, \Phi(|f|/\lambda), t)}{\omega(B) \phi(\omega(B))} \leq 1\right\}.
	\end{align}

	\begin{remark}
		From \eqref{2.2} and \eqref{2.3} we readily obtain the following facts:
		\begin{itemize}
			\item \eqref{2.2} implies the estimate
			\begin{align}\label{remark-2.1}
				\frac{1}{\omega(B) \phi (\omega(B))} \int_{B} \Phi \left( \frac{|f(x)|}{\|f\|_{\Phi, \phi, \omega, B}}\right) \omega(x) d\mu(x) \leq 1,
			\end{align}
			
			\item Combining \eqref{2.2} and \eqref{2.3} yields
			\[
			\|f\|_{\Phi, \phi, \omega, B, \text{weak}} \leq \|f\|_{\Phi, \phi, \omega, B}.
			\]
			
			\item For \eqref{2.3}, invoking \eqref{2.1} gives the identity
			\[
			\sup_{t>0} t \, \omega(\Omega, \Phi(|f|), t) = \sup_{t>0} t \, \omega(\Omega, f, \Phi^{-1}(t)) = \sup_{t>0} \Phi(t) \, \omega(\Omega, f, t).
			\]
			
		\end{itemize}
	\end{remark}
	
	
	Setting \(\omega(x) \equiv 1\) in \eqref{2.2} recovers the generalized Luxemburg norm introduced in \cite{SZ2026}, which is precisely the generalized mean oscillation used throughout this paper.
	
	\begin{align}\label{G-L-norm}
		\|f\|_{\Phi, \phi, 1, B} = \inf \left\{ \lambda > 0 : \frac{1}{\mu(B) \phi (\mu(B))} \int_{B} \Phi \left( \frac{|f(x)|}{\lambda}\right) d\mu(x) \leq 1 \right\} := \|f\|_{\Phi, \phi, B}.
	\end{align}
	
	\begin{figure}[!h]
		\begin{center}
			\begin{tikzpicture}
				\node (1) at (-3, 0) {\(\|f\|_{\Phi, \phi, B}\)};
				\node (2) at (0, 0) {\(\|f\|_{r, \phi, B}\)};
				\node (3) at (3, 1.5) {\(\|f\|_{r, B}\)};
				\node (4) at (3, -1.5) {\(\|f\|_{\phi, B}\)};
				\node (5) at (6, 0) {\(\|f\|_{B}\)};
				\draw[-stealth, thick] (1)--(2) node[midway, above] {\eqref{L_1}};
				\draw[-stealth, thick] (2)--(3) node[midway, above = 2pt] {\eqref{L_2}};
				\draw[-stealth, thick] (2)--(4) node[midway, below = 2pt] {\eqref{L_3}};
				\draw[-stealth, thick] (3)--(5) node[midway, above = 2pt] {\eqref{L_4}};
				\draw[-stealth, thick] (4)--(5) node[midway, below = 2pt] {\eqref{L_5}};
			\end{tikzpicture}
		\end{center}
		\caption{Relations among different mean oscillations}\label{figure3}
	\end{figure}
	
	\begin{remark}
		The detailed content of Figure \ref{figure3} is as follows:
		\begin{list}{\rm (\theenumi)}{\usecounter{enumi}\leftmargin=1.2cm \labelwidth=1cm \itemsep=0.2cm \topsep=.2cm \renewcommand{\theenumi}{\arabic{enumi}}}
			
			\item \label{L_1} If we take \(\Phi(t) = t^r\), then we get
			\[ \|f\|_{\Phi(t) = t^r, \phi, B} = \left(\frac{1}{\mu(B) \phi(\mu(B))} \int_{B} |f|^r d \mu \right)^{\frac1r} =: \|f\|_{r, \phi, B},\]
			
			\item \label{L_2} In the case \(\phi(t) = 1\), it follows that
			\[ \|f\|_{r, \phi(t) = 1, B} = \left( \frac{1}{\mu(B)}\int_{B} |f|^r d \mu \right)^{\frac1r} =: \|f\|_{r, B},\]
			
			\item  \label{L_3} For the choice \(r = 1\), we have
			\[ \|f\|_{r = 1, \phi, B} = \frac{1}{\mu(B) \phi(\mu(B))} \int_{B} |f| d \mu =: \|f\|_{\phi, B},\]
			
			\item  \label{L_4} Setting \(r = 1\) yields
			\[ \|f\|_{r = 1, B} = \frac{1}{\mu(B)} \int_{B} |f| d \mu =: \|f\|_B,\]
			
			\item  \label{L_5} When \(\phi(t) = 1\), one arrives at
			\[ \|f\|_{\phi(t) = 1, B} = \frac{1}{\mu(B)} \int_{B} |f| d \mu =: \|f\|_B.\]
		\end{list}
	\end{remark}
	
	
	From \eqref{2.2} we can derive the following lemma; its proof is provided in Appendix \ref{Appendix4}.
	
	\begin{lemma}\label{lemma:AB}
		Let \((X, \mathfrak{M}, \mu)\) be a measure space endowed with a ball-basis \(\mathfrak{B}\), and let \(\omega \in A^\mathfrak{B}_p\) (see Appendix \ref{Appendix1}). Take a Young function \(\Phi \in \mathscr{Y}\) and a function \(\phi \in \mathscr{G}\). Suppose \(A, B \in \mathfrak{B}\). Then the following estimates hold.
		
		\begin{list}{\rm (\theenumi)}{\usecounter{enumi}\leftmargin=1.2cm \labelwidth=1cm \itemsep=0.2cm \topsep=.2cm \renewcommand{\theenumi}{\alph{enumi}}}
			\item \label{AB-1} If \(A \cap B \neq \varnothing\) and \(\omega(B) \le C_{A,B}\,\omega(A)\) for some constant \(C_{A,B} \ge 1\), then
			\[
			\|f \cdot \mathbf{1}_B\|_{\Phi, \phi, \omega, A} \le \max\{1, C_{dc}^{-1}\}\,\max\{1, C_{ic}\}\,C_{A,B} \; \|f\|_{\Phi, \phi, \omega, B}.
			\]
			
			\item \label{AB-2} If \(A \subset B\), we have
			\[
			\|f \cdot \mathbf{1}_A\|_{\Phi, \phi, \omega, B} \le \max\{1, C_{ic}\} \; \|f\|_{\Phi, \phi, \omega, A},
			\]
			and also
			\[
			\|f\|_{\Phi, \phi, \omega, A} \le \frac{\max\{1, C_{ic}\}\; \omega(B)\,\phi(\omega(B))}{\omega(A)\,\phi(\omega(A))} \; \|f\|_{\Phi, \phi, \omega, B}.
			\]
		\end{list}
	\end{lemma}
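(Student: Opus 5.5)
The plan is to argue directly from the definition \eqref{2.2} of the weighted Luxemburg norm. Both parts reduce to the same mechanism: compare the normalised modular
\[
\frac{1}{\omega(A)\phi(\omega(A))}\int_A \Phi\Bigl(\frac{|g|}{\lambda}\Bigr)\omega\,d\mu
\]
for two different balls. Then exploit the convexity of \(\Phi\) together with \(\Phi(0)=0\), which gives \(\Phi(t/c)\le \Phi(t)/c\) for \(c\ge 1\). This converts a multiplicative loss in the normalising factor into a multiplicative loss in \(\lambda\). From the \(\mathscr{G}\)-class properties of \(\phi\) (Appendix \ref{Appendix3}) I will use two facts: almost decreasing with constant \(C_{dc}\), i.e. \(\phi(s)\le C_{dc}^{-1}\phi(t)\) up to normalisation for \(t\le s\); and \(t\mapsto t\phi(t)\) almost increasing with constant \(C_{ic}\). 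These are exactly the constants appearing in the statement.

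For (b), first inequality: take \(\lambda=\|f\|_{\Phi,\phi,\omega,A}\), so that \eqref{remark-2.1} holds on \(A\). Since \(A\subset B\), the almost increasing property of \(t\phi(t)\) gives \(\omega(A)\phi(\omega(A))\le C_{ic}\,\omega(B)\phi(\omega(B))\). Hence
\[
\frac{1}{\omega(B)\phi(\omega(B))}\int_B\Phi\Bigl(\frac{|f|\mathbf 1_A}{\lambda}\Bigr)\omega\,d\mu\le C_{ic}.
\]
Replacing \(\lambda\) by \(\max\{1,C_{ic}\}\lambda\) and using convexity brings the modular below \(1\), which yields the claim.

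For (b), second inequality: take \(\lambda=\|f\|_{\Phi,\phi,\omega,B}\). Then \(\int_A\Phi(|f|/\lambda)\omega\le\int_B\Phi(|f|/\lambda)\omega\le\omega(B)\phi(\omega(B))\). Dividing by \(\omega(A)\phi(\omega(A))\) produces the ratio \(R=\omega(B)\phi(\omega(B))/(\omega(A)\phi(\omega(A)))\), and \(R\ge C_{ic}^{-1}\) by monotonicity. Scaling \(\lambda\) by \(\max\{1,C_{ic}\}R\ge1\) and applying convexity finishes this part.

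For (a), set \(\lambda=\|f\|_{\Phi,\phi,\omega,B}\). Since the integrand vanishes off \(B\), we have \(\int_A\Phi(|f|\mathbf 1_B/\lambda)\omega\le\omega(B)\phi(\omega(B))\). It then suffices to bound
\[
\frac{\omega(B)\phi(\omega(B))}{\omega(A)\phi(\omega(A))}\le \max\{1,C_{dc}^{-1}\}\max\{1,C_{ic}\}C_{A,B}.
\]
I would split into cases. If \(\omega(B)\le\omega(A)\), the almost increasing property of \(t\phi(t)\) gives the bound \(\max\{1,C_{ic}\}\). If \(\omega(A)<\omega(B)\le C_{A,B}\,\omega(A)\), then \(\phi(\omega(B))\le C_{dc}^{-1}\phi(\omega(A))\) by almost decreasingness, while \(\omega(B)/\omega(A)\le C_{A,B}\); this gives \(\max\{1,C_{dc}^{-1}\}C_{A,B}\). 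Taking the product of the maxima covers both cases, and the convexity scaling concludes the argument. The hypothesis \(A\cap B\neq\varnothing\) and \(\omega\in A_p^{\mathfrak B}\) enter only to guarantee that \(\omega(A),\omega(B)\in(0,\infty)\), and, in applications, the doubling-type comparison \(\omega(B)\le C_{A,B}\omega(A)\).

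The main obstacle is bookkeeping the exact conventions for \(C_{dc},C_{ic}\) in the \(\mathscr{G}\)-class definition so that the constants come out in the stated form. I would fix these from Appendix \ref{Appendix3} first. The rest is routine convexity.
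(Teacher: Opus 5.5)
Your proposal is correct and follows the paper's proof in the appendix essentially step for step. In (a) you split into the cases \(\omega(B)\le\omega(A)\) and \(\omega(A)<\omega(B)\), using almost-increase of \(t\phi(t)\) and almost-decrease of \(\phi\) respectively; in (b) you take \(\lambda\) equal to the norm on \(A\) (resp.\ \(B\)) and rescale by \(\max\{1,C_{ic}\}\) (resp.\ \(\max\{1,C_{ic}\}\,\omega(B)\phi(\omega(B))/(\omega(A)\phi(\omega(A)))\ge 1\)), closing each estimate with the convexity bound \(\Phi(t/c)\le\Phi(t)/c\) for \(c\ge1\). One small correction to your side remark: the hypothesis \(A\cap B\neq\varnothing\) is never used, and positivity and finiteness of \(\omega(A),\omega(B)\) come from \eqref{list:BB1}, \(\omega\) being a weight, and local integrability of \(\omega\in A^{\mathfrak B}_p\), not from that hypothesis.
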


	\subsection{Abstract weighted (weak) Orlicz-Morrey space endowed with a ball-basis}\label{Section3.2}~
	

	We now introduce the abstract weighted Orlicz-Morrey space equipped with a ball-basis.
	
	\begin{definition}\label{weight Orlicz-Morrey space}
		Let \((X, \mathfrak{M}, \mu)\) be a measure space endowed with a ball-basis \(\mathfrak{B}\), let \(\Phi \in \mathscr{Y}\) be a Young function and \(\phi \in \mathscr{G}\), and suppose \(\omega \in A^\mathfrak{B}_p\). Then the {\tt abstract weighted Orlicz-Morrey space endowed with a ball-basis} is defined as
		\begin{align*}
			L^{(\Phi,\phi)} (X, \omega) = \left\{ f \in L_{{\rm loc}}^1(X, \omega(x) dx) : \|f\|_{L^{(\Phi,\phi)}(X, \omega)} < +\infty \right\} =: L^{(\Phi,\phi)} (\omega),
		\end{align*}
		where the norm is given by
		\begin{align*}
			\|f\|_{L^{(\Phi, \phi)}(X, \omega)} = \sup_{B \in \mathfrak{B}}\|f\|_{\Phi, \phi, \omega, B} =: \|f\|_{L^{(\Phi,\phi)} (\omega)}.
		\end{align*}
	\end{definition}

	
	We now define the abstract weighted weak Orlicz-Morrey space endowed with a ball-basis.
	
	\begin{definition}\label{weak Orlicz-Morrey space}
		Let \((X, \mathfrak{M}, \mu)\) be a measure space endowed with a ball-basis \(\mathfrak{B}\), let \(\Phi \in \mathscr{Y}\) and \(\phi \in \mathscr{G}\), and suppose \(\omega \in A^\mathfrak{B}_p\). Then the {\tt abstract weighted weak Orlicz-Morrey space endowed with a ball-basis} is defined by
		\begin{align*}
			L^{(\Phi,\phi)}_{\rm weak} (X, \omega) = \left\{ f \in L_{\rm loc}^1 (X, \omega(x) dx) : \|f\|_{L^{(\Phi,\phi)}_{\rm weak} (X, \omega)} < +\infty \right\} =: L^{(\Phi,\phi)}_{\rm weak} (\omega),
		\end{align*}
		where the quasi-norm is given by
		\begin{align*}
			\|f\|_{L^{(\Phi,\phi)}_{\rm weak} (X, \omega)} = \sup_{B \in \mathfrak{B}} \|f\|_{\Phi, \phi, \omega, B, \text{weak}} =: \|f\|_{L^{(\Phi,\phi)}_{\rm weak} (\omega)}.
		\end{align*}
	\end{definition}


	Inspired by \cite[Proposition 3.2]{OM-Nakai2008}, we obtain the following lemma; its proof can be found in Appendix \ref{Appendix4}.

	\begin{lemma}\label{PPppw}
		Let \((X, \mathfrak{M}, \mu)\) be a measure space endowed with a ball-basis \(\mathfrak{B}\). Suppose \(\Phi, \Psi \in \mathscr{Y}\) are Young functions, \(\phi, \psi \in \mathscr{G}\), and \(\omega\) is a weight function. Then the following statements hold.
		
		\begin{list}{\rm (\theenumi)}{\usecounter{enumi}\leftmargin=1.2cm \labelwidth=1cm \itemsep=0.2cm \topsep=.2cm \renewcommand{\theenumi}{\arabic{enumi}}}
			\item\label{PPw} If \(\Phi(t) \le \Psi\left( C(\Phi, \Psi) t \right)\), then
			\[
			L^{(\Phi, \phi)} (\omega) \supset L^{(\Psi, \phi)} (\omega), \qquad \|f\|_{L^{(\Phi, \phi)}(\omega)} \le C(\Phi, \Psi) \, \|f\|_{L^{(\Psi, \phi)}(\omega)}.
			\]
			
			\item\label{ppw} If \(\psi(t) \le C(\psi, \phi) \,\phi(t)\), then
			\[
			L^{(\Psi, \phi)} (\omega) \supset L^{(\Psi, \psi)} (\omega), \qquad \|f\|_{L^{(\Psi, \phi)}(\omega)} \le \max\{1,\, C(\psi,\phi)\} \, \|f\|_{L^{(\Psi, \psi)}(\omega)}.
			\]
		\end{list}
	\end{lemma}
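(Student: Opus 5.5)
The statement to prove is Lemma \ref{PPppw}. Both parts are monotonicity properties of the local Luxemburg quantity \eqref{2.2}. The plan is to prove each inequality for a fixed ball \(B \in \mathfrak{B}\) and then take the supremum over \(\mathfrak{B}\) as in Definition \ref{weight Orlicz-Morrey space}. Once the norm inequality holds, the set inclusion is immediate: if the right-hand norm is finite, so is the left, and local integrability is the same requirement in both spaces.

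\textbf{Part \eqref{PPw}.} Fix \(B\) and set \(\lambda_0 := \|f\|_{\Psi,\phi,\omega,B}\). If \(\lambda_0=0\), then \(f=0\) \(\omega\)-a.e. on \(B\) and there is nothing to prove, so assume \(\lambda_0>0\). Put \(\lambda = C(\Phi,\Psi)\lambda_0\). The hypothesis \(\Phi(t)\le \Psi(C(\Phi,\Psi)t)\) gives
\[
\frac{1}{\omega(B)\phi(\omega(B))}\int_B \Phi\Bigl(\frac{|f|}{\lambda}\Bigr)\omega\,d\mu \le \frac{1}{\omega(B)\phi(\omega(B))}\int_B \Psi\Bigl(\frac{|f|}{\lambda_0}\Bigr)\omega\,d\mu \le 1 .
\]
The last step is exactly \eqref{remark-2.1} applied to \(\Psi\). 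Hence \(\lambda\) is admissible in the infimum defining \(\|f\|_{\Phi,\phi,\omega,B}\), so \(\|f\|_{\Phi,\phi,\omega,B}\le C(\Phi,\Psi)\|f\|_{\Psi,\phi,\omega,B}\). Taking the supremum over \(B\) finishes this part.

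\textbf{Part \eqref{ppw}.} Now the Young function \(\Psi\) is fixed and only the normalizing weight changes. Fix \(B\), write \(\lambda_0 := \|f\|_{\Psi,\psi,\omega,B}\), \(C := \max\{1, C(\psi,\phi)\}\), and \(\lambda := C\lambda_0\). The two ingredients are as follows.
\begin{itemize}
\item Convexity of \(\Psi\) with \(\Psi(0)=0\) gives \(\Psi(t/C)\le \Psi(t)/C\) for \(C\ge 1\).
\item The hypothesis gives \(\psi(\omega(B)) \le C(\psi,\phi)\,\phi(\omega(B)) \le C\,\phi(\omega(B))\).
\end{itemize}
Combining these,
\[
\frac{1}{\omega(B)\phi(\omega(B))}\int_B \Psi\Bigl(\frac{|f|}{C\lambda_0}\Bigr)\omega\,d\mu
\le \frac{1}{C\,\omega(B)\phi(\omega(B))}\int_B \Psi\Bigl(\frac{|f|}{\lambda_0}\Bigr)\omega\,d\mu
\le \frac{\psi(\omega(B))}{C\,\phi(\omega(B))} \le 1 .
\]
The middle step uses \eqref{remark-2.1} for the pair \((\Psi,\psi)\), which bounds the integral by \(\omega(B)\psi(\omega(B))\). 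Therefore \(\|f\|_{\Psi,\phi,\omega,B}\le C\|f\|_{\Psi,\psi,\omega,B}\), and the supremum over \(B\) gives the claim.

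\textbf{Expected obstacle.} The only delicate point is the use of \eqref{remark-2.1}, i.e. that the infimum in \eqref{2.2} is attained. This follows from left-continuity of the Young function \(\Psi\) and the monotone convergence theorem as \(\lambda\downarrow\lambda_0\). I will either cite that remark or argue with \(\lambda_0+\varepsilon\) and let \(\varepsilon\to0\). Part \eqref{ppw} also relies on \(\Psi\) being convex with \(\Psi(0)=0\), which I take from the definition of \(\mathscr{Y}\) in Appendix \ref{Appendix2}. No ball-basis or weight structure is needed, so the argument is purely measure-theoretic.
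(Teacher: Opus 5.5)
Your proposal is correct and is essentially the paper's proof: fix a ball \(B\), show the scaled parameter is admissible in the infimum, then take the supremum over \(\mathfrak{B}\). In part (1) you use \(\Phi(t)\le\Psi(C(\Phi,\Psi)t)\), and in part (2) convexity \(\Psi(t/C)\le\Psi(t)/C\) together with \(\psi\le C(\psi,\phi)\,\phi\); both parts rely on \eqref{remark-2.1}. In part (1) you correctly normalize with \(\|f\|_{\Psi,\phi,\omega,B}\), which the argument requires. The paper's corresponding display writes \(\|f\|_{\Psi,\psi,\omega,B'}\), a typo that your version avoids.
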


	\begin{definition}\label{Y-G condition}
		Let \(\Phi, \Psi \in \mathscr{Y}\) and \(\phi, \psi \in \mathscr{G}\). If the quadruple \((\Phi, \Psi, \phi, \psi)\) satisfies
		\[
		\Phi(t) \le \Psi\left(C(\Phi,\Psi)t\right) \quad \text{and} \quad \psi(t) \le C(\psi,\phi)\,\phi(t),
		\]
		then we say that \((\Phi,\Psi,\phi,\psi)\in\mathscr{Y} \otimes \mathscr{G}\).
	\end{definition}

	\begin{remark}
		Combining Lemma \ref{PPppw} with Definition \ref{Y-G condition} yields the inclusion
		\[
		L^{(\Phi, \phi)}(\omega) \supset L^{(\Psi, \psi)}(\omega),
		\]
		as well as the norm estimate
		\[
		\|f\|_{L^{(\Phi, \phi)}(\omega)} \le C(\Phi, \Psi)\,\max\{1,\, C(\psi,\phi)\} \, \|f\|_{L^{(\Psi, \psi)}(\omega)} =: \mathscr{K} \|f\|_{L^{(\Psi, \psi)}(\omega)}.
		\]
		We refer to Definition \ref{Y-G condition} as the {\tt \(\mathscr{Y} \otimes \mathscr{G}\)-condition}, which serves as an essential assumption in this paper.
	\end{remark}

	\subsection{Abstract weighted Campanato space endowed with a ball-basis}\label{Section3.3}~
	
%
%

	We recall that Campanato spaces on the underlying space \(\mathbb{R}^n\) serve to characterize the local oscillation of functions \cite{Nakai2002}, and their definition involves the mean value of functions. We first introduce the following notation. Let \((X, \mathfrak{M}, \mu)\) be a measure space endowed with a ball-basis \(\mathfrak{B}\), and let \(\omega \in A^\mathfrak{B}_p\). For a function \(f \in L_{\rm loc}^1(X)\) and a ball \(B \in \mathfrak{B}\), we set
	\[
	f_B := \frac{1}{\omega(B)} \int_B f(x) \, \omega(x) \, d\mu(x).
	\]
	
	Below we define the abstract weighted Campanato space endowed with a ball-basis.

	\begin{definition}
		Let \((X, \mathfrak{M}, \mu)\) be a measure space endowed with a ball-basis \(\mathfrak{B}\), let \(1 \le p < \infty\), and let \(\varphi : (0,\infty) \to (0,\infty)\) be a given function. For a weight \(\omega \in A^\mathfrak{B}_p\), the {\tt abstract weighted Campanato space endowed with a ball-basis} is defined as
		\begin{align*}
			\mathfrak{L}^p_\varphi (X, \omega) = \left\{ f \in L_{\rm loc}^p(X) : \|f\|_{\mathfrak{L}^p_\varphi (X, \omega)} < +\infty \right\} =: \mathfrak{L}^p_\varphi (\omega),
		\end{align*}
		where the norm is given by
		\begin{align}\label{Campanato norm}
			\|f\|_{\mathfrak{L}^p_\varphi (X, \omega)} = \sup_{B \in \mathfrak{B}} \frac{1}{\varphi(\omega(B))} \left( \frac{1}{\omega(B)} \int_B |f(x) - f_B|^p \, \omega(x) \, d \mu(x) \right)^{1/p} =: \|f\|_{\mathfrak{L}^p_\varphi (\omega)}.
		\end{align}
	\end{definition}

	\begin{remark}
		
		Consider \((\mathbb{R}^n, \mathscr{L}^n)\) with \(\omega(x) \equiv 1\). For \(1 \le p < \infty\) we restrict \(-n < \alpha < 1\). Choosing \(\varphi(r) = r^\alpha\) in the above definition of the abstract weighted Campanato space endowed with a ball-basis, the resulting space \(\mathfrak{L}^p_\varphi (\mathbb{R}^n)\) exhibits the following relationships with other classical spaces.
		
		\begin{itemize}
			
			\item If \(-n/p \le \alpha < 0\), then \(\mathfrak{L}^p_\varphi (\mathbb{R}^n) / \mathfrak{C}\) coincides with the space \(L^{p,\lambda}\), where \(\mathfrak{C}\) denotes the set of constant functions and \(\lambda = n + \alpha p\). In particular,
			\[\left\{
			\begin{array}{cl}
				\alpha = -n / p, & \mathfrak{L}^p_\varphi (\mathbb{R}^n) / \mathfrak{C} = L^p (\mathbb{R}^n), \\
				p = 1, & \mathfrak{L}^p_\varphi (\mathbb{R}^n) = {\rm BMO}_\varphi (\mathbb{R}^n). \text{ Furthermore, } \left\{
				\begin{array}{cl}
					\alpha = 0, & {\rm BMO}_\varphi = {\rm BMO} \supset L^\infty, \\
					0 < \alpha < 1, & {\rm BMO}_\varphi = {\rm Lip}_\alpha.
				\end{array}
				\right.
			\end{array}
			\right.\]
			
			
			\item Here \(\mathrm{Lip}_\alpha(\mathbb{R}^n)\) denotes the \(\alpha\)-H\"older space on \(\mathbb{R}^n\).
			
		\end{itemize}
		
	\begin{figure}[h!]
		\begin{center}
			\begin{tikzpicture}[scale=4.5, font=\small]
				\def\xmin{-1.5}
				\def\xmax{1.1}
				\def\ymin{-0.3}
				\def\ymax{1.3}
				
				\draw[-stealth, thick] (\xmin,0) -- (\xmax, 0) node[right] {\(\alpha\)};
				\draw[-stealth, thick] (0,\ymin) -- (0, \ymax) node[above] {\(1/p\)};
				
				\node at (0, 1) [above right] {\(1\)};
				\node at (1, 0) [below] {\(1\)};
				\node at (0, 0) [below right] {\(0\)};
				\node at (0, 0.6) [right] {\(1/p_0\)};
				\node at (-0.78, 0) [below] {\(-n/p_0\)};
				\node at (-1.3, 0) [below] {\(-n\)};
				
				\fill[yellow!20] (0, 0) -- (0, 1) -- (-1.3, 1) -- cycle;
				
				\node at (-0.5, 0.75) {\rm Morrey};
				
				\draw[thick] (0, 0) -- (0, 1);
				\draw[thick, red] (0, 1) -- (-1.3, 1);
				\draw[thick, red] (0, 0) -- (-1.3, 1);
				\node[red, below left] at (-0.78, 0.6) {\(L^{p_0}\)};
				\draw[thick, red, dashed] (-0.78, 0) -- (-0.78, 0.6);
				\draw[thick, red, dashed] (0, 0.6) -- (-0.78, 0.6);
				\draw[thick, red, dashed] (-1.3, 0) -- (-1.3, 1.12);
				
				\fill[red] (0, 1) circle (0.5pt) node[below left = 0.03cm] {\rm BMO};
				\draw[thick, blue, dashed] (-1.31, 1) -- (1, 1);
				\draw[decorate, decoration={brace, amplitude=8pt}, thick, blue] (-1.3, 1.07) -- (1, 1.07);
				\node[blue, right] at (-0.25, 1.18) {\({\rm BMO}_\varphi\)};
				\draw[thick, blue, dashed] (1, 0) -- (1, 1.12);
				
				\draw[decorate, decoration={brace, amplitude=8pt}, thick, purple!60!black] (1, 0.95) -- (0, 0.95);
				\node[purple!60!black] at (0.55, 0.83) {H\"older};
				\node[purple!60!black] at (0.55, 0.75) {(\({\rm Lip}_\alpha\))};
				
			\end{tikzpicture}
			\caption{Campanato spaces \(\mathfrak{L}^p_\varphi (\mathbb{R}^n, \mathscr{L}^n)\), \(\varphi(r) = r^\alpha\)}
		\end{center}
	\end{figure}
		 
	\end{remark}

	\section{Sparse domination of \(\Psi\)-BOOs}\label{Section4}
	
	\subsection{Some lemmas on ball-basis}\label{Section4.1}~
	
	We use the notation \(a \lesssim b\) to mean \(a \leq c\cdot b\) for some constant \(c>0\) independent of all relevant parameters, and we write \(a \sim b\) when both \(a \lesssim b\) and \(b \lesssim a\) hold.
	
	Throughout this work, \((X,\,\mathfrak{M},\,\mu)\) be a measure space endowed with a ball-basis \(\mathfrak{B}\). We shall frequently use several simple consequences of the axioms defining \(\mathfrak{B}\):
	
	\begin{list}{\rm (\theenumi)}{\usecounter{enumi}\leftmargin=1.2cm \labelwidth=1cm \itemsep=0.2cm \topsep=.2cm \renewcommand{\theenumi}{\arabic{enumi}}}
		\item\label{Ax-1} A set \(E \subset X\) is termed {\tt bounded} when there exists some ball \(B \in \mathfrak{B}\) containing \(E\).
		
		\item\label{Ax-2} For measurable sets \(E, F \subset X\), we write \(E \subset F\) a.s. (almost surely) when \(\mu(E \setminus F) = 0\).
		
		\item\label{Ax-3} A point \(x \in E\) is a {\tt density point} of a measurable set \(E \in \mathfrak{M}\) if for any \(\varepsilon > 0\) there exists a ball \(B \ni x\) such that \(\mu(B \cap E) > (1 - \varepsilon) \mu(B)\).
		
		\item\label{Ax-4} The {\tt density property} for a measure space \((X,\,\mathfrak{M},\,\mu)\) is characterized by the condition that, given any measurable set \(E\), almost every point of \(E\) is a density point.
		
		\item\label{Ax-5} A ball-basis \(\mathfrak{B}\) satisfies the {\tt doubling condition} if and only if there exists a constant \(\theta > 1\) such that, given any ball \(A \in \mathfrak{B}\) with \(A^\P \subsetneq X\), we can select a ball \(B \in \mathfrak{B}\) for which both inclusions \(A \subset B\) and the measure inequality \(\mu(B) \leq \theta \mu(A)\) hold.
		
	\end{list}
	
	The geometric properties of the ball-basis discussed below can be found in \cite{Abstract2019, Caomingming2023, Abstract2021, Abstract2023, SZ2026}.

	\begin{lemma}\label{Lm 3.1}
		Let \((X,\,\mathfrak{M},\,\mu)\) be a measure space endowed with a ball-basis \(\mathfrak{B}\). Assume that there exist a ball \(B \in \mathfrak{B}\) and a sequence of balls \(\{G_k\}_{k \geq 1} \subset \mathfrak{B}\) such that \(G_k \cap B \neq \varnothing\) for each \(k\) and \(\lim_{k \to \infty} \mu(G_k) = r := \sup_{A \in \mathfrak{B}} \mu(A)\). Then \(X \subset \bigcup_k G_k^\P\). Moreover, for any ball \(A \in \mathfrak{B}\), there exists an integer \(k_0\) such that \(A \subset G_k\) whenever \(k \ge k_0\).
	\end{lemma}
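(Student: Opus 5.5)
The plan is to establish the covering statement first, then prove the containment $A\subset G_k^\P$ for all large $k$, and finally try to remove the hull. The main tool throughout is Proposition \ref{P-ball-basis}, combined with \eqref{list:BB2} and the fact that $\mu(G_k)\to r$.

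\emph{Covering.} Fix $x\in X$ and a point $y\in B$. By \eqref{list:BB2} there is a ball $D\ni x,y$, so $\mu(D)\le r$.
\begin{itemize}
\item If $\mu(D)<r$, then $\mu(D)\le 2\mu(G_k)$ for large $k$, since $\mu(G_k)\to r$.
\item If $\mu(D)=r$, then $r<\infty$ and $\mu(G_k)>r/2$ eventually, so again $\mu(D)\le 2\mu(G_k)$.
\end{itemize}
To invoke Proposition \ref{P-ball-basis} we need $D\cap G_k\neq\varnothing$, which is not given directly; we only know $D\cap B\neq\varnothing$ and $G_k\cap B\neq\varnothing$. I would chain through $B$. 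For large $k$ we have $\mu(B)\le 2\mu(G_k)$, so $B\subset G_k^\P$ by Proposition \ref{P-ball-basis}. Then $D$ meets $G_k^\P$, and $\mu(D)\le r$ is eventually at most $2\mu(G_k^\P)$, because $\mu(G_k^\P)\ge\mu(G_k)$. Applying Proposition \ref{P-ball-basis} once more gives $D\subset G_k^{[2]}$. To land in a single hull rather than a double hull, I would instead pass to a subsequence $G_{k}'$ chosen with $G_{k}'\supset G_{j}^\P$ where possible, using the monotonicity $A\subset B\Rightarrow A^\P\subset B^\P$ from \eqref{list:BB4}. This yields $x\in\bigcup_k G_k^\P$, as Karagulyan does in \cite{Abstract2019}.

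\emph{Containment.} The same chaining, applied to an arbitrary ball $A$ in place of $D$, gives $A\subset G_k^\P$ for all $k\ge k_0$. The step from $G_k^\P$ down to $G_k$ is the main obstacle, and I would handle it by splitting on whether $r$ is finite.

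If $r<\infty$, then $\mu(G_k^\P)\le r$ because $G_k^\P\in\mathfrak{B}$. Hence
\[
\mu(A\setminus G_k)\le \mu(G_k^\P\setminus G_k)\le r-\mu(G_k)\to 0 ,
\]
so $A\subset G_k$ up to sets of arbitrarily small measure. I would then argue that once $\mu(A\setminus G_k)<\mu(A)$ for all large $k$, the containment must hold in the a.s. sense of item \ref{Ax-2}. The mechanism I would try is to apply the covering part to the tail sequence $\{G_j\}_{j\ge k}$ together with \eqref{list:BB3}.

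If $r=\infty$, I would replace $A$ by a ball $A'$ with $A\subset A'$ and $\mu(A')$ much larger, and choose $k$ so that $\mu(G_k)\ge 2\mathscr{C}_0\mu(A')$. Since $A'$ meets $G_k$, Proposition \ref{P-ball-basis} applied with the roles of $A'$ and $G_k$ reversed would trap $A$ inside $G_k$.

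I expect this last reversal to be the genuine difficulty. I may only be able to obtain the containment $A\subset G_k$ almost surely, or after relabelling the sequence by its hulls, and I would state explicitly which form is obtained.
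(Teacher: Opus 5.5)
The paper gives no proof of this lemma; it defers to the ball-basis literature. Your argument therefore has to stand on its own, and it has two genuine gaps.

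\textbf{The covering step.} Your chaining through $B$ only gives $D\subset G_k^{[2]}$. The proposed repair, a subsequence with $G_k'\supset G_j^\P$, is not available, because nothing in the hypotheses makes any $G_k$ contain the hull of another. For instance, on $\mathbb{R}$ with intervals and $I^\P=5I$, take $B=[0,1]$ and $G_k=[1,k+1]$. Then $G_j^\P=[1-2j,1+3j]$ lies in no $G_k$.

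The missing idea is to enlarge the ball through $x$ until it contains all of $B$ \emph{before} comparing it with $G_k$. Let $D\ni x,y$ with $y\in B$.
\begin{itemize}
\item If $\mu(B)\le 2\mu(D)$, Proposition~\ref{P-ball-basis} gives $B\subset D^\P$.
\item Otherwise it gives $D\subset B^\P$.
\end{itemize}
In both cases some $E\in\{D^\P,B^\P\}$ contains $x$ and all of $B$, so $E$ meets every $G_k$. Since $\mu(E)\le r$ and $\mu(G_k)\to r$, you get $\mu(E)\le 2\mu(G_k)$ for large $k$ (if $r<\infty$, use $\mu(G_k)>r/2$ eventually). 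One application of Proposition~\ref{P-ball-basis} then gives $x\in E\subset G_k^\P$.

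Doing the enlargement twice yields a ball containing $A\cup B$: first absorb a given ball $A$ via $D^\P$ or $A^\P$, then absorb $B$. Hence $A\subset G_k^\P$ for all $k\ge k_0$. Note that your chaining, applied to $A$, would again give only $G_k^{[2]}$, not the $G_k^\P$ you claim.

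\textbf{The hull-removal step.} This step cannot be carried out, because the ``moreover'' part as literally stated, with $A\subset G_k$, is false, even almost surely.
\begin{itemize}
\item Case $r=\infty$: in the example above, $A=[-1,0]$ satisfies $A\cap G_k=\varnothing$ for every $k$.
\item Case $r<\infty$: take $X=[0,1]$ with Lebesgue measure and $\mathfrak{B}$ the closed subintervals of positive length, with $I^\P=5I\cap[0,1]$. This is a ball-basis with $\mathscr{C}_0=5$. Take $B=[0,\tfrac12]$, $G_k=[0,1-\tfrac1k]$ and $A=[\tfrac12,1]$. Then $\mu(A\setminus G_k)=\tfrac1k>0$ for every $k$.
\end{itemize}
So your estimate $\mu(A\setminus G_k)\le r-\mu(G_k)\to 0$ is the best one can get and cannot be upgraded to a.s.\ containment. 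Your ``reversal'' for $r=\infty$ also fails: Proposition~\ref{P-ball-basis} only ever yields containment in a hull ($A'\subset G_k^\P$ or $G_k\subset (A')^\P$), never in $G_k$ itself.

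The correct conclusion is $A\subset G_k^\P$ for $k\ge k_0$, which already implies $X=\bigcup_k G_k^\P$. Containment in $G_k$ itself holds only under extra nesting, e.g.\ $G_k^\P\subset G_{k+1}$. That is exactly the ``relabelling by hulls'' you anticipated, and you should state the result in that form.
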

	
	\begin{lemma}\label{Lm more}
		Let \((X,\,\mathfrak{M},\,\mu)\) be a measure space endowed with a ball-basis \(\mathfrak{B}\) that satisfies Proposition \ref{P-ball-basis}. The following statements then hold:
		\begin{list}{\rm (\theenumi)}{\usecounter{enumi}\leftmargin=1.2cm \labelwidth=1cm \itemsep=0.2cm \topsep=.2cm \renewcommand{\theenumi}{\arabic{enumi}}}
			\item\label{L-B1} If a set \(E \subset X\) is bounded and covered by a family of balls \(\mathcal{G} \subset \mathfrak{B}\) (that is, \(E \subset \bigcup_{G \in \mathcal{G}} G\)), then one can extract a finite or infinite pairwise disjoint subfamily \(\{G_k\}_{k \in I} \subset \mathcal{G}\), where the index set \(I\) is either finite or \(I = \mathbb{N}_+\), such that \(E \subset \bigcup_{k \in I} G_k^\P\).
			
			\item\label{L-B2} Under the additional assumption that \(\mathfrak{B}\) satisfies the density property, the following holds: for every bounded measurable set \(E\) with \(\mu(E)>0\) and every \(\varepsilon > 0\), there exists a sequence \(\{B_k\} \subset \mathfrak{B}\) satisfying
			\begin{align*}
				\mu \left( \bigcup_k B_k \setminus E \right) < \varepsilon \quad \text{and} \quad \mu \left( E \setminus \bigcup_k B_k \right) < \alpha \mu(E),
			\end{align*}
			with some admissible constant \(\alpha \in (0,1)\).
			
			\item\label{L-B3} If, furthermore, \(\mathfrak{B}\) satisfies the density property, then for every bounded measurable set \(E \subset X\) one can find a sequence of balls \(\{B_k\}_{k \in \mathbb{N}_+} \subset \mathfrak{B}\) such that
			\begin{align*}
				E \subset \bigcup_k B_k \; a.s. \quad \text{and} \quad \sum_k \mu(B_k) \leq 2\mathscr{C}_0\,\mu(E).
			\end{align*}
			
			\item\label{L-B4} Let \(A \in \mathfrak{B}\) and let \(\mathcal{G} \subset \mathfrak{B}\) be a pairwise disjoint family of balls such that for every \(G \in \mathcal{G}\), and there exist constants \(c_1,\, c_2 > 0\) satisfying
			\begin{align*}
				G^\P \cap A \neq \varnothing \quad \text{and} \quad 0 < c_1 \leq \mu(G) \leq c_2 <\infty.
			\end{align*}
			Then \(\mathcal{G}\) is finite, and its cardinality \(\#\mathcal{G}\) satisfies
			\begin{align*}
				\#\mathcal{G} \lesssim c_1^{-1} \max\{c_2,\; \mu(A)\}.
			\end{align*}
			(Here \(\#\mathcal{G}\) denotes the number of elements in the family \(\mathcal{G}\).)
			
			\item\label{L-B5} The condition \eqref{list:BB3} holds if and only if the density property is satisfied.
			
		\end{list}
	\end{lemma}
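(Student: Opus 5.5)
The plan is to prove the five items in the order (1), (4), (3), (2), (5). The only tools are Proposition \ref{P-ball-basis}, Lemma \ref{Lm 3.1}, and the measure bound \(\mu(B^\P)\le\mathscr{C}_0\mu(B)\) from \eqref{list:BB4}.

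\emph{Item (1)} is a Vitali-type greedy selection. Fix a ball \(B_0\supset E\), and discard every \(G\in\mathcal{G}\) with \(G\cap E=\varnothing\). Let \(s=\sup_{G\in\mathcal{G}}\mu(G)\).
\begin{itemize}
\item If \(s=r:=\sup_{A\in\mathfrak{B}}\mu(A)\), take \(G_k\in\mathcal{G}\) meeting \(B_0\) with \(\mu(G_k)\to r\). Lemma \ref{Lm 3.1} gives \(B_0\subset G_k\) for large \(k\), so a single ball already covers \(E\).
\item Otherwise \(s<\infty\). Choose \(G_1\) with \(\mu(G_1)>s/2\) and remove all members meeting \(G_1\). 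Inductively choose \(G_{k+1}\) among the survivors with measure more than half their supremum.
\end{itemize}
A discarded \(G\) was removed at some step \(k\), so it meets \(G_k\) and satisfies \(\mu(G)\le 2\mu(G_k)\). Proposition \ref{P-ball-basis} then gives \(G\subset G_k^\P\). It remains to show that every \(G\) is eventually discarded when the selection is infinite, i.e.\ that \(\mu(G_k)\to0\). The selected balls are disjoint and all meet \(B_0\). Those with \(\mu(G_k)\le 2\mu(B_0)\) lie in \(B_0^\P\). The remaining ones are trapped in a single hull \(G_{k_*}^{[1]}\) of one of the largest of them, again by Proposition \ref{P-ball-basis}. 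Hence \(\sum_k\mu(G_k)<\infty\). This finiteness is the step I expect to be most delicate, since the measure need not be doubling; it is exactly the counting mechanism of item (4), so I would prove (4) first and quote it here.

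\emph{Item (4)} is a counting argument with the same two-case comparison. If \(\mu(G^\P)\le 2\mu(A)\), then \(G\subset G^\P\subset A^\P\). Disjointness and \(\mu(G)\ge c_1\) bound the number of such \(G\) by \(\mathscr{C}_0\mu(A)/c_1\). If instead \(2\mu(A)<\mu(G^\P)\le\mathscr{C}_0c_2\), fix one such \(G_0\) with \(\mu(G_0^\P)\) at least half the maximal value. Every other such \(G^\P\) meets \(A\) and has comparable measure, so two applications of Proposition \ref{P-ball-basis} place \(G\) inside \(G_0^{[3]}\). That hull has measure \(\lesssim\mathscr{C}_0^{3}c_2\), so the count is \(\lesssim c_2/c_1\). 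Adding the two cases gives \(\#\mathcal{G}\lesssim c_1^{-1}\max\{c_2,\mu(A)\}\).

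\emph{Items (3) and (2)} combine (1) with density points.
\begin{itemize}
\item For (3): for a.e.\ \(x\in E\) pick \(B_x\ni x\) with \(\mu(B_x\cap E)>\tfrac12\mu(B_x)\). Apply (1) to \(\{B_x\}\) to get disjoint \(G_k\) with \(E\subset\bigcup_kG_k^\P\) a.s. Take \(B_k=G_k^\P\), so that
\[
\sum_k\mu(B_k)\le\mathscr{C}_0\sum_k\mu(G_k)<2\mathscr{C}_0\sum_k\mu(G_k\cap E)\le 2\mathscr{C}_0\mu(E).
\]
\item For (2): choose \(B_x\) with \(\mu(B_x\cap E)>(1-\delta)\mu(B_x)\), and keep the disjoint \(G_k\) themselves (not their hulls). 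Then \(\mu(\bigcup_kG_k\setminus E)\le\delta\sum_k\mu(G_k)\le\delta(1-\delta)^{-1}\mu(E)<\varepsilon\) for small \(\delta\). Also, since \(E\subset\bigcup_kG_k^\P\) a.s.,
\[
\mu(E)\le\mathscr{C}_0\sum_k\mu(G_k)\le\mathscr{C}_0(1-\delta)^{-1}\mu\Bigl(E\cap\bigcup_kG_k\Bigr).
\]
This gives \(\mu(E\setminus\bigcup_kG_k)\le\alpha\mu(E)\) with \(\alpha=1-(1-\delta)/\mathscr{C}_0\in(0,1)\).
\end{itemize}

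\emph{Item (5).} For ``density \(\Rightarrow\) \eqref{list:BB3}'', iterate (2): apply it to \(E\), then to the uncovered remainder, and so on. The uncovered part shrinks geometrically like \(\alpha^j\mu(E)\), and the overshoots sum to at most \(\sum_j\varepsilon 2^{-j}\). Unbounded \(E\) is handled by exhausting with an increasing sequence of balls, which exists by Lemma \ref{Lm 3.1} and \eqref{list:BB2}. For ``\eqref{list:BB3} \(\Rightarrow\) density'', suppose a set \(F\subset E\) of positive measure consists of non-density points of \(E\). Approximate \(F\) by balls via \eqref{list:BB3}; balls that carry most of their mass in \(F\) contain points of \(F\), which then are density points of \(E\). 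This contradicts the choice of \(F\), once the approximation error is made smaller than \(\mu(F)\).
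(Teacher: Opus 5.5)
The paper does not prove this lemma itself; it defers to the cited ball-basis literature, so your argument has to stand on its own. Items (1)--(4) and the implication ``density $\Rightarrow$ \eqref{list:BB3}'' in (5) are sound. There is one harmless slip in (1). A selected $G_k$ with $\mu(G_k)>2\mu(B_0)$ is disjoint from $G_{k_*}$, so Proposition \ref{P-ball-basis} cannot be applied to that pair. Routing through $B_0$ gives $B_0\subset G_{k_*}^{[1]}$, and then only $G_k\subset G_{k_*}^{[2]}$. This still yields $\sum_k\mu(G_k)<\infty$.

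The genuine gap is ``\eqref{list:BB3} $\Rightarrow$ density''.

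\emph{The key step is assumed.} The existence of a ball carrying most of its mass in $F$ is exactly what must be proved, and \eqref{list:BB3} does not supply it. It bounds $\mu(\bigcup_kB_k\setminus F)$, not $\sum_k\mu(B_k\setminus F)$. Since the approximating balls may overlap, passing from the union to a single good ball needs a covering argument.

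\emph{The stated threshold is too weak.} In $\mathbb{R}$, $F=[0,1]$ and the single ball $B=[-0.9,1]$ satisfy $\mu(F\triangle B)=0.9<\mu(F)$, yet $\mu(B\cap F)/\mu(B)=1/1.9$.

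\emph{The contradiction does not close.} One ball with ratio $>1-\varepsilon$ does not make its points density points. It contradicts non-density only if the parameter $\varepsilon$ is uniform over $F$.

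\emph{Measurability is unaddressed.} The non-density set is $E$ minus an arbitrary union of balls, so \eqref{list:BB3} cannot be applied to it without a measurability argument.

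The repair uses your item (1) together with outer measure. Fix a ball $D$ and $m\in\mathbb{N}_+$. Let $F_m$ be the set of $x\in E\cap D$ with $\mu(B\cap E)\le(1-\frac1m)\mu(B)$ for every ball $B\ni x$. Approximate the measurable set $E\cap D$ by balls $B_k$ with error $\eta$. Points of $F_m\cap\bigcup_kB_k$ lie only in balls with $\mu(B_k\setminus E)\ge\frac1m\mu(B_k)$. Selecting disjoint $G_j$ among these by (1) gives
\[
\mu^*(F_m)\le\eta+\mathscr{C}_0\sum_j\mu(G_j)\le\eta+\mathscr{C}_0\,m\sum_j\mu(G_j\setminus E)\le(1+\mathscr{C}_0m)\,\eta .
\]
Let $\eta\to0$, then take the union over $m$ and over an exhausting sequence of balls $D$. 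This shows that the non-density points of $E$ form a null set.
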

	
	\subsection{Properties of \(\Psi\)-BOOs}\label{Section4.2}~
	
	
	The proofs of the lemmas in this subsection are taken from \cite{SZ2026} and will not be repeated here.
	
	For a \(\mathbb{B}\)-valued operator \(T\) satisfying \(Tf(x) = \|\mathscr{T}f(x)\|_{\mathbb{B}}\), we consider condition \eqref{BOO-1}. For any \(A, B \in \mathfrak{B}\) with \(A \subset B\) we introduce  
	\begin{align*}
		\Delta_T(A,B) := \sup_{f \in L^{(\Psi,\psi)}(X)} \sup_{x\in A} \frac{ \left\| \mathscr{T}(f\,\mathbf{1}_{B^\P})(x) - \mathscr{T}(f \, \mathbf{1}_{A^\P})(x) \right\|_{\mathbb{B}}} {\|f\|_{\Phi,\phi,B^\P}}.
	\end{align*}
	
	\begin{lemma}\label{Lm 3.3}
		Let \((X,\,\mathfrak{M},\,\mu)\) be a measure space endowed with a ball-basis \(\mathfrak{B}\). Given a Young function \(\Phi \in \mathscr{Y}\) and a function \(\phi \in \mathscr{G}\), the following hold:
		\begin{list}{\rm (\theenumi)}{\usecounter{enumi}\leftmargin=1.2cm \labelwidth=1cm \itemsep=0.2cm \topsep=.2cm \renewcommand{\theenumi}{\alph{enumi}}}
		\item\label{Lm 3.3-a} If \(A, B, C \in \mathfrak{B}\) satisfy \(A \subset B \subset C\), then \(\Delta_T(A,B) \lesssim \Delta_T(A, C)\).
		
		\item\label{Lm 3.3-b} For any \(A, B \in \mathfrak{B}\) with \(A \subset B\), we have
		\begin{align*}
			\langle \|f \, \mathbf{1}_{B^\P}\|\rangle_{\Phi,\phi,A} \lesssim \frac{\mu(B) \phi(\mu(B))}{\mu(A)\phi(\mu(A))} \|f\|_{\Phi,\phi, B^\P}.
		\end{align*}
		\end{list}
	\end{lemma}

	\begin{lemma}\label{Lm 3.4}
		Let \((X,\,\mathfrak{M},\,\mu)\) be a measure space endowed with a ball-basis \(\mathfrak{B}\), and let \((\Psi,\psi) \in E_{mb}\). Assume that \(T\) is a \(\mathbb{B}\)-valued linear operator satisfying the condition \eqref{BOO-2} and that \(T \in \mathbb{W}_{\Psi, \psi}\). Then for every \(f \in L^{(\Psi, \psi)}(X)\) the following estimates hold:
		
		\begin{list}{\rm (\theenumi)}{\usecounter{enumi}\leftmargin=1.2cm \labelwidth=1cm \itemsep=0.2cm \topsep=.2cm \renewcommand{\theenumi}{\alph{enumi}}}
			\item\label{Lm 3.4-a} For any balls \(A,B \in \mathfrak{B}\) with \(A \subset B\),
			\begin{align*}
				\Delta_T(A,B)\lesssim\left(\mathscr{C}_2(T)+\|T\|_{L^{(\Psi, \psi)}\to wL^{(\Psi,\psi)}}\right)\frac{\mu(B)\phi(\mu(B)) }{\mu(A)\phi(\mu(A))}.
			\end{align*}
			
			\item\label{Lm 3.4-b} For any balls \(A, B, C \in \mathfrak{B}\) with \(A \subset B \subset C\),
			\begin{align*}
				\Delta_T(A,C)\lesssim\left(\mathscr{C}_2(T)+\|T\|_{L^{(\Psi, \psi)}\to wL^{(\Psi,\psi)}}+\Delta_T(A,B)\right) \frac{\mu(C)\phi(\mu(C)) }{\mu(B)\phi(\mu(B))}.
			\end{align*}
			
			\item\label{Lm 3.4-c} For any balls \(A,B \in \mathfrak{B}\) with \(A \subset B\),
			\begin{align*}
				\Delta_T(A,B^{[k]}) \lesssim \left(\mathscr{C}_2(T) + \|T\|_{ L^{(\Psi, \psi)}\to wL^{(\Psi,\psi)}}+\Delta_T(A,B)\right) \frac{\mathscr{C}_0^k \phi \left(\mathscr{C}_0^k\mu(B)\right)}{\phi(\mu(B))}, \quad k\geq 1.
			\end{align*}
			
			\item\label{Lm 3.4-d} If \(\mathfrak{B}\) also satisfies the doubling condition, then \(T\) satisfies condition \eqref{BOO-1}. Consequently, \(T\) is a \(\mathbb{B}\)-valued \(\Psi\)-BOO.
			
		\end{list}
	\end{lemma}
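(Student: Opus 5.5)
The statement to be proved is Lemma \ref{Lm 3.4}, parts (a)--(d). I would follow the scheme of Karagulyan's original argument for bounded oscillation operators, with averages replaced by the generalized Luxemburg norms $\|\cdot\|_{\Phi,\phi,B}$.

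\textbf{Part (a).} Fix $A\subset B$, a function $f$, and a point $x\in A$. Write
\[
\mathscr{T}(f\mathbf{1}_{B^\P})-\mathscr{T}(f\mathbf{1}_{A^\P})=\mathscr{T}(g), \qquad g:=f\mathbf{1}_{B^\P\setminus A^\P}.
\]
Since $g$ vanishes on $A^\P$, we have $\mathscr{T}g-\mathscr{T}(g\mathbf{1}_{A^\P})=\mathscr{T}g$. Condition \eqref{BOO-2} applied on the ball $A$ therefore gives
\[
\|\mathscr{T}g(x)-\mathscr{T}g(x')\|_\mathbb{B}\le \mathscr{C}_2(T)\langle\|g\|\rangle_{\Phi,\phi,A} \qquad \text{for all } x,x'\in A.
\]
By Lemma \ref{Lm 3.3}\eqref{Lm 3.3-b}, the right-hand side is at most a constant times $\mathscr{C}_2(T)\,\frac{\mu(B)\phi(\mu(B))}{\mu(A)\phi(\mu(A))}\|f\|_{\Phi,\phi,B^\P}$.

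It remains to find one point $x'\in A$ where $\|\mathscr{T}g(x')\|_\mathbb{B}$ is controlled. For this I use $T\in\mathbb{W}_{\Psi,\psi}$ with a fixed $\lambda$, say $\lambda=1/2$. It gives a subset of $A$ of measure at least $\mu(A)/2$ on which
\[
\|\mathscr{T}(g)\|_\mathbb{B}\le \Psi^{-1}\bigl(2\psi(\mu(A))\bigr)\,\|T\|\,\|g\|_{\Phi,\phi,A}.
\]
The condition $(\Psi,\psi)\in E_{mb}$ bounds the factor $\Psi^{-1}(2\psi(\mu(A)))$ by a constant. Lemma \ref{lemma:AB} with $\omega\equiv1$ bounds $\|g\|_{\Phi,\phi,A}$ by the same ratio times $\|f\|_{\Phi,\phi,B^\P}$. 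Choosing such an $x'$ and combining the two estimates gives (a).

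\textbf{Part (b).} Split
\[
\mathscr{T}(f\mathbf{1}_{C^\P})-\mathscr{T}(f\mathbf{1}_{A^\P})=\bigl[\mathscr{T}(f\mathbf{1}_{C^\P})-\mathscr{T}(f\mathbf{1}_{B^\P})\bigr]+\bigl[\mathscr{T}(f\mathbf{1}_{B^\P})-\mathscr{T}(f\mathbf{1}_{A^\P})\bigr].
\]
The second bracket is at most $\Delta_T(A,B)\|f\|_{\Phi,\phi,B^\P}$. By the monotonicity in Lemma \ref{lemma:AB}\eqref{AB-2}, $\|f\|_{\Phi,\phi,B^\P}$ is bounded by $\frac{\mu(C)\phi(\mu(C))}{\mu(B)\phi(\mu(B))}$ times $\|f\|_{\Phi,\phi,C^\P}$, up to $\mathscr{C}_0$-factors. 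The first bracket is evaluated at $x\in A\subset B$, and the argument of part (a) applies to it on the ball $B$ in place of $A$: use \eqref{BOO-2} on $B$ and pick a good point of $B$ from the weak condition. This yields the factor $\mu(C)\phi(\mu(C))/(\mu(B)\phi(\mu(B)))$. The good point must lie in $B$, but the estimate is uniform over $x\in B$ by \eqref{BOO-2}, so restricting to $x\in A$ costs nothing.

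\textbf{Part (c).} Apply (b) with $C=B^{[k]}$. Then use $\mu(B^{[k]})\le\mathscr{C}_0^k\mu(B)$ together with the monotonicity of $t\mapsto t\phi(t)$ from Appendix \ref{Appendix3}.

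\textbf{Part (d).} Given $B_0$ with $B_0^\P\subsetneq X$, the doubling condition supplies a ball $B\supset B_0$ with $\mu(B)\le\theta\mu(B_0)$. We need $B\supsetneq B_0$. If the ball produced equals $B_0$, I would instead take $B_0^\P$ or a suitable enlargement; this is possible because $B_0^\P\neq X$, combined with Lemma \ref{Lm 3.1}. Part (a) then bounds $\Delta_T(B_0,B)$ by the constant times $\theta\,\phi(\theta\mu(B_0))/\phi(\mu(B_0))$, and the $\mathscr{G}$-class doubling of $\phi$ makes this uniformly bounded. That is exactly \eqref{BOO-1}.

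\textbf{Main obstacle.} The delicate point is the step in (a) that passes from the weak-type estimate to pointwise control. There the threshold involves $\|g\|_{\Phi,\phi,A}$ with $g=f\mathbf{1}_{B^\P\setminus A^\P}$, and converting it to $\|f\|_{\Phi,\phi,B^\P}$ produces the ratio $\mu(B)\phi(\mu(B))/(\mu(A)\phi(\mu(A)))$. Doing this requires the $E_{mb}$ normalization to absorb the factor $\Psi^{-1}$, and it requires the comparison between Luxemburg norms over nested balls, including the role of $C_{ic}$.
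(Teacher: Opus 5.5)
Your parts (a)--(c) are correct. They follow the Karagulyan-type argument that the paper takes over from its predecessor:
\eqref{BOO-2} on $A$ applied to $g=f\mathbf{1}_{B^\P\setminus A^\P}$, then Lemma \ref{Lm 3.3}\eqref{Lm 3.3-b} for $\langle\|g\|\rangle_{\Phi,\phi,A}$, then a good point from $\mathbb{W}_{\Psi,\psi}$. For (b) you split through $B^\P$ and apply (a) to the pair $(B,C)$, and for (c) you take $C=B^{[k]}$ and use that $t\phi(t)$ is almost increasing.

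Two small points on (a):
First, $E_{mb}$ is assumed for a single $\lambda$ only. Bounding $\Psi^{-1}(2\psi(\mu(A)))$ therefore needs the concavity of $\Psi^{-1}$, i.e.\ $\Psi^{-1}(st)\le\max\{1,s\}\Psi^{-1}(t)$.
Second, $g$ vanishes on $A\subset A^\P$, so $\|g\|_{\Phi,\phi,A}=0$. With $\mathbb{W}_{\Psi,\psi}$ as stated, the good-point step is trivial and the ratio in (a) comes only from the \eqref{BOO-2} term. The step you single out as the main obstacle is not where the difficulty lies.

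The real gap is the fallback in (d). To get \eqref{BOO-1} from (a) with a constant $\mathscr{C}_1(T)$ independent of $B_0$, you need a ball $B\supsetneq B_0$ with $\mu(B)\lesssim\mu(B_0)$. Neither of your proposed replacements gives this.
\begin{itemize}
\item The hull $B_0^\P$ may equal $B_0$. In the martingale example, $B^\P=B$ whenever $\mu(\mathscr{F}(B))>2\mu(B)$.
\item A strictly larger ball produced by \eqref{list:BB2} or Lemma \ref{Lm 3.1} has no upper bound on its measure. The ratio $\mu(B)\phi(\mu(B))/(\mu(B_0)\phi(\mu(B_0)))$ in (a) is then uncontrolled.
\end{itemize}
A comparable enlargement need not exist at all. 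For a martingale basis with a non-doubling filtration, the only balls strictly containing $B_0$ are the ancestors $\mathscr{F}^k(B_0)$, and their measures can be arbitrarily large relative to $\mu(B_0)$.

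The strict inclusion has to come from the hypothesis. The doubling condition must be read, as in Karagulyan's setting, as supplying $B\supsetneq A$ with $\mu(B)\le\theta\mu(A)$. The version printed in Section \ref{Section4.1} is satisfied by $B=A$ and is vacuous. Under the strict reading your main argument for (d) is complete, since $\theta$, $C_{ic}$ and $C_{dc}$ bound the ratio. The fallback should simply be dropped.
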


	\begin{lemma}\label{Lm 3.5}
		Let \((X,\,\mathfrak{M},\,\mu)\) be a measure space endowed with a ball-basis \(\mathfrak{B}\). Assume that \(T\) is a \(\mathbb{B}\)-valued \(\Psi\)-BOO and that \(T \in \mathbb{W}_{\Psi, \psi}\). Define
		\begin{align*}
			\mathscr{C}(T) := \mathscr{C}_1(T) + \mathscr{C}_2(T) + \|T\|_{ L^{(\Psi, \psi)}(X) \to wL^{(\Psi,\psi)}(X)}.
		\end{align*}
		Then the following statements hold.
		
		\begin{list}{\rm (\theenumi)}{\usecounter{enumi}\leftmargin=1.2cm \labelwidth=1cm \itemsep=0.2cm \topsep=.2cm \renewcommand{\theenumi}{\arabic{enumi}}}
			\item\label{Lm 3.5-1} For every ball \(B \in \mathfrak{B}\) there exists a ball \(\widetilde{B} \in \mathfrak{B}\) such that
			\begin{align*}
				B^{[2]} \subset \widetilde{B}, \quad \Delta_T(B^{[2]}, \widetilde{B}) \lesssim \mathscr{C}(T), \quad \text{and either} \; \widetilde{B}^{[1]}=\widetilde{B} \; \text{or} \; \mu(\widetilde{B}) \geq 2 \mu(B).
			\end{align*}
			
			\item\label{Lm 3.5-2} For every ball \(B \in \mathfrak{B}\) satisfying \(B^\P = B\), there exists \(\widetilde{B} \in \mathfrak{B}\) such that
			\begin{align*}
				B^{[2]} \subset \widetilde{B}, \quad \Delta_T(B^{[2]}, \widetilde{B}) \lesssim \mathscr{C}(T), \quad \text{and} \quad \mu(\widetilde{B}) \geq 2 \mu(B).
			\end{align*}
			
			\item\label{Lm 3.5-3} For every ball \(B \in \mathfrak{B}\) there exists a sequence \(\{B_k\}_{k \geq 0} \subset \mathfrak{B}\) with \(B_0 = B\) such that for all \(k \geq 0\),
			\begin{align*}
				X = \bigcup_k B_k, \quad B_k^{[2]} \subset B_{k+1}, \quad \text{and} \quad \Delta_T(B_k^{[2]}, B_{k+1}) \lesssim \mathscr{C}(T).
			\end{align*}
			
		\end{list}
	\end{lemma}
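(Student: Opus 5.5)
The plan is to follow Karagulyan's construction in the abstract Lebesgue setting. The only new input is that the weak-type constant $\|T\|_{L^{(\Psi,\psi)}\to wL^{(\Psi,\psi)}}$ and the factor $\mu(\cdot)\phi(\mu(\cdot))$ enter through Lemma \ref{Lm 3.4}. A standing observation is used throughout: for a fixed number $k$ of hull iterations, the factor $\mathscr{C}_0^k\phi(\mathscr{C}_0^k\mu(B))/\phi(\mu(B))$ in Lemma \ref{Lm 3.4}\eqref{Lm 3.4-c} is bounded by a constant depending only on $\mathscr{C}_0$ and $k$. This relies on the monotonicity built into the class $\mathscr{G}$ (I expect $\phi$ almost decreasing with $t\phi(t)$ almost increasing), and any such constant is absorbed into $\lesssim$. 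I also use $\Delta_T(A,A)=0$, so Lemma \ref{Lm 3.4}\eqref{Lm 3.4-c} with $A=B=B^{[2]}$ gives
\[
\Delta_T(B^{[2]},B^{[k]})\lesssim \mathscr{C}_2(T)+\|T\|_{L^{(\Psi,\psi)}\to wL^{(\Psi,\psi)}}
\]
for each bounded $k$.

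\emph{Part \eqref{Lm 3.5-1}.} I split according to the growth of the hulls.
\begin{itemize}
\item If $\mu(B^{[k]})\ge 2\mu(B)$ for some fixed small $k$ (say $k\le 4$), take $\widetilde B=B^{[k]}$. The displayed bound then controls $\Delta_T(B^{[2]},\widetilde B)$.
\item If the hull stabilizes, $B^{[k+1]}=B^{[k]}$, take $\widetilde B=B^{[k]}$; then $\widetilde B^{[1]}=\widetilde B$.
\item In the remaining case $\mu(B^{[k]})<2\mu(B)$ and $(B^{[k]})^\P\subsetneq X$. Apply \eqref{BOO-1} to $B_0=B^{[k]}$ to get $B'\supsetneq B^{[k]}$ with $\Delta_T(B^{[k]},B')\le\mathscr{C}_1(T)$. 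Lemma \ref{Lm 3.3}\eqref{Lm 3.3-a} and Lemma \ref{Lm 3.4}\eqref{Lm 3.4-b} then transfer this to $\Delta_T(B^{[2]},B')$, at the cost of the factor $\mu(B')\phi(\mu(B'))/\mu(B^{[k]})\phi(\mu(B^{[k]}))$.
\end{itemize}
If $\mu(B')\le 2\mu(B^{[k]})$, Proposition \ref{P-ball-basis} forces $B'\subset B^{[k+1]}$, and I return to the hull chain. Otherwise $\mu(B')>2\mu(B^{[k]})\ge 2\mu(B)$, and I take $\widetilde B=B'$ or $B'^{\P}$.

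The main obstacle is keeping this factor bounded when $B'$ is much larger than $B^{[k]}$. My first attempt is to replace $B'$ by the smallest ball in the chain $B^{[k]}\subset\cdots$ that first exceeds measure $2\mu(B)$, using Lemma \ref{Lm 3.3}\eqref{Lm 3.3-a} for monotonicity in the outer ball. Failing that, I would use \eqref{BOO-1} directly at the stage where doubling first occurs, so that no transfer factor appears.

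\emph{Part \eqref{Lm 3.5-2}.} If $B^\P=B$ then $B^{[2]}=B$. Apply part \eqref{Lm 3.5-1}: either $\mu(\widetilde B)\ge2\mu(B)$ and we are done, or $\widetilde B^{[1]}=\widetilde B$. In the latter case $\widetilde B$ is self-hulled; if $\widetilde B=X$ its measure is maximal, and otherwise \eqref{BOO-1} applies to $B_0=\widetilde B$. This produces a strictly larger ball with $\Delta_T$ controlled by $\mathscr{C}_1(T)$. By Proposition \ref{P-ball-basis} and $\widetilde B^\P=\widetilde B$, that ball must have measure $>2\mu(\widetilde B)\ge2\mu(B)$. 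Lemma \ref{Lm 3.4}\eqref{Lm 3.4-b} combines the two $\Delta_T$ bounds.

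\emph{Part \eqref{Lm 3.5-3}.} Set $B_0=B$ and inductively let $B_{k+1}$ be the ball $\widetilde{B_k}$ from part \eqref{Lm 3.5-1}, or from part \eqref{Lm 3.5-2} when $B_k^\P=B_k$. Applying \eqref{Lm 3.5-2} to a self-hulled ball returned by \eqref{Lm 3.5-1} guarantees that the measures at least double every two steps, unless $\sup_{A\in\mathfrak{B}}\mu(A)$ is attained (i.e. $X$ itself is reached). All $B_k$ contain $B$, so Lemma \ref{Lm 3.1} with $G_k=B_k$ shows that every ball is eventually contained in some $B_k$. Property \eqref{list:BB2} then gives $X=\bigcup_kB_k$. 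The inclusions $B_k^{[2]}\subset B_{k+1}$ and the bounds $\Delta_T(B_k^{[2]},B_{k+1})\lesssim\mathscr{C}(T)$ are inherited from the construction.
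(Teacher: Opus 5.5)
There is a genuine gap. It sits exactly where you locate your ``main obstacle'', and the same error recurs in part \eqref{Lm 3.5-2}.

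\textbf{Where the argument fails.} In the remaining case of part \eqref{Lm 3.5-1} you apply \eqref{BOO-1} at \(B_0=B^{[k]}\) and then move the inner ball down to \(B^{[2]}\). Lemma \ref{Lm 3.4}\eqref{Lm 3.4-b} does this only at the price of the factor \(\mu(B')\phi(\mu(B'))/(\mu(B^{[k]})\phi(\mu(B^{[k]})))\), and \eqref{BOO-1} gives no upper bound on \(\mu(B')\). Neither of your fixes closes this.
\begin{itemize}
\item Lemma \ref{Lm 3.3}\eqref{Lm 3.3-a} compares outer balls for a \emph{fixed} inner ball. Shrinking \(B'\) therefore presupposes the bound on \(\Delta_T(B^{[2]},B')\) that you lack, and no chain of balls between \(B^{[k]}\) and \(B'\) is available anyway.
\item In the subcase \(\mu(B')\le 2\mu(B^{[k]})\), ``return to the hull chain'' is an open loop. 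You never show \(\mu(B')\ge 2\mu(B)\) there, and iterating would accumulate constants.
\item In part \eqref{Lm 3.5-2}, Lemma \ref{Lm 3.4}\eqref{Lm 3.4-b} with inner ball \(B\), middle ball \(\widetilde B\) and new outer ball \(B''\) carries the factor \(\mu(B'')\phi(\mu(B''))/(\mu(\widetilde B)\phi(\mu(\widetilde B)))\). So it does not combine two bounded \(\Delta_T\)'s into a bounded one.
\end{itemize}

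\textbf{The missing observation.} Every ball \(B'\supsetneq B^{[2]}\) automatically satisfies \(\mu(B')>2\mu(B)\). Indeed \(B'\cap B\neq\varnothing\), so \(\mu(B')\le 2\mu(B)\) would give \(B'\subset B^{\P}\subset B^{[2]}\) by Proposition \ref{P-ball-basis}, contradicting strict inclusion. This makes the transfer unnecessary.

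\textbf{Part \eqref{Lm 3.5-1}.} If \(B^{[3]}\neq X\), apply \eqref{BOO-1} directly to \(B_0=B^{[2]}\); this is legitimate since \(B_0^{\P}=B^{[3]}\subsetneq X\). The resulting \(\widetilde B\supsetneq B^{[2]}\) has \(\Delta_T(B^{[2]},\widetilde B)\le\mathscr{C}_1(T)\) with no transfer, and it doubles automatically. If \(B^{[3]}=X\), take \(\widetilde B=B^{[3]}=X\), which is self-hulled. Lemma \ref{Lm 3.4}\eqref{Lm 3.4-a} then bounds \(\Delta_T(B^{[2]},B^{[3]})\) by a constant times \((\mathscr{C}_2(T)+\|T\|)\,\mathscr{C}_0/C_{dc}\). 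No case analysis on the growth of the hulls is needed.

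\textbf{Part \eqref{Lm 3.5-2}.} This is the same argument with \(B^{[2]}=B\), but it requires \(B\neq X\). If \(X\in\mathfrak{B}\), then \(B=X\) is self-hulled and no ball has measure \(\ge 2\mu(X)\). So your remark that ``its measure is maximal'' cannot yield the conclusion there.

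\textbf{Part \eqref{Lm 3.5-3}.} With the corrected part \eqref{Lm 3.5-1}, each step of your iteration either reaches \(X\) (after which you keep \(B_k=X\)) or more than doubles the measure. Your appeal to Lemma \ref{Lm 3.1} and \eqref{list:BB2} then finishes the proof as you wrote it.

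The paper does not reproduce this proof; it defers to the authors' earlier work. The direct application of \eqref{BOO-1} at \(B^{[2]}\) described above is the standard argument in Karagulyan's framework.
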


	\begin{lemma}\label{Lm 3.6}
		Let \((X,\,\mathfrak{M},\,\mu)\) be a measure space endowed with a ball-basis \(\mathfrak{B}\), and assume that \(T\) is a \(\Psi\)-BOO with \(T \in \mathbb{W}_{\Psi, \psi}\). Set \(\lambda \geq 3 \mathscr{C}_0^4\), let \(F \subset X\) be a measurable set, and choose a ball \(A \in \mathfrak{B}\) such that
		\begin{align*}
			F \cap A \ne \varnothing \quad \text{and} \quad \mu(F) \leq \lambda^{-1}\mu(A).
		\end{align*} 
		Then there exists a family \(\mathcal{G} \subset \mathfrak{B}\) satisfying the following properties:
		\begin{list}{\rm (\theenumi)}{\usecounter{enumi}\leftmargin=1.2cm \labelwidth=1cm \itemsep=0.2cm \topsep=.2cm \renewcommand{\theenumi}{\arabic{enumi}}}
			\item\label{Lm 3.6-1} For every \(G \in \mathcal{G}\), \(\displaystyle F \cap A^\P \cap G \ne \varnothing\),
			
			\vspace{6pt}
			
			\item\label{Lm 3.6-2} \(\displaystyle F \cap A^\P \subset \bigcup_{G \in \mathcal{G}} G \; a.s.\),
			
			\vspace{6pt}
			
			\item\label{Lm 3.6-3} \(\displaystyle \mu\left(\bigcup_{G \in \mathcal{G}} G^\P\right) \leq 3\lambda^{-1}\mathscr{C}_0^2 \mu(A)\),
			
			\vspace{6pt}
			
			\item\label{Lm 3.6-4} For each \(G \in \mathcal{G}\) one can find a ball \(\widetilde{G} \in \mathfrak{B}\) such that  
			\begin{align*}
				\widetilde{G} \not\subset F, \quad G^{[2]} \subset \widetilde{G} \subset A^\P, \quad \text{and} \quad \Delta_T(G^{[2]},\widetilde{G}) \lesssim \mathscr{C}(T),
			\end{align*}
			with constants independent of \(F\), \(A\), \(\lambda\) and \(\mathcal{G}\).
		\end{list}
	\end{lemma}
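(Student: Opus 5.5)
We follow Karagulyan's covering argument from \cite{Abstract2019}, as adapted in \cite{Caomingming2023, SZ2026}, using only the ball-basis axioms and the results already collected above. The first step is a Vitali-type selection. Apply Lemma \ref{Lm more}\eqref{L-B1} to the bounded set \(E := F \cap A^\P\). This set is bounded because it lies in \(A^\P\). For each density point \(x\) of \(E\), choose balls \(G \ni x\) such that \(G\) meets \(F\cap A^\P\) and \(\mu(G \cap F) > \frac12 \mu(G)\). Among these, take \(G\) essentially maximal in measure: say \(\mu(G)\) is at least half the supremum of \(\mu(G')\) over all admissible \(G'\ni x\) with \(\mu(G'\cap F)>\frac12\mu(G')\). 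Since \(\mu(F) \le \lambda^{-1}\mu(A)\), every such \(G\) satisfies \(\mu(G) < 2\mu(F) \le 2\lambda^{-1}\mu(A)\), so the supremum is finite. By Lemma \ref{Lm more}\eqref{L-B5} the density property holds, so almost every point of \(E\) is covered. Lemma \ref{Lm more}\eqref{L-B1} then yields a disjoint subfamily \(\{G_k\}\) whose hulls \(G_k^\P\) cover \(E\) almost surely.

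Take \(\mathcal{G} := \{G_k^\P\}\), or the \(G_k\) themselves with the hulls used for coverage, choosing whichever makes \eqref{Lm 3.6-2} literal. Property \eqref{Lm 3.6-1} holds by construction. For \eqref{Lm 3.6-3}, disjointness of the \(G_k\) together with \(\mu(G_k\cap F)>\frac12\mu(G_k)\) gives
\[
\sum_k \mu(G_k^{[2]}) \le \mathscr{C}_0^2 \sum_k \mu(G_k) \le 2\mathscr{C}_0^2\mu(F) \le 2\lambda^{-1}\mathscr{C}_0^2\mu(A).
\]
If \(\mathcal G\) consists of the hulls, the constant picks up one more factor of \(\mathscr{C}_0\). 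That extra factor is absorbed by the threshold \(\lambda\ge 3\mathscr{C}_0^4\), which also guarantees \(\mu(G^{[j]})\le \mu(A)\) for \(j\le 3\).

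The heart of the proof is \eqref{Lm 3.6-4}. For \(G\in\mathcal G\), apply Lemma \ref{Lm 3.5}\eqref{Lm 3.5-3} to obtain a chain \(G=B_0\), \(B_k^{[2]}\subset B_{k+1}\), with \(\Delta_T(B_k^{[2]},B_{k+1})\lesssim \mathscr{C}(T)\) and \(\bigcup_k B_k=X\). Stop at the first index \(m\) for which \(B_{m}\not\subset F\) in measure, i.e.\ \(\mu(B_m\cap F)\le\frac12\mu(B_m)\). Such an \(m\) exists because \(\mu(F)<\infty\) while the \(B_k\) exhaust \(X\). Maximality of \(G\) forces \(m\) to be small, namely \(\mu(B_{m}) \lesssim \mu(G)\) up to powers of \(\mathscr{C}_0\). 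Since \(G\) meets \(A^\P\) and \(\mu(B_m)\le 2\mu(A^\P)\), Proposition \ref{P-ball-basis} gives \(B_m\subset A^{[2]}\). If necessary we replace \(A^\P\) by the hull chain, which the constants \(\mathscr{C}_0^4\) allow.

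Set \(\widetilde G := B_m\). To get \(\Delta_T(G^{[2]},\widetilde G)\lesssim \mathscr{C}(T)\), chain the estimates: Lemma \ref{Lm 3.3}\eqref{Lm 3.3-a} controls the first step, and Lemma \ref{Lm 3.4}\eqref{Lm 3.4-b},\eqref{Lm 3.4-c} control each subsequent step. Because \(m\) is bounded and the measure ratios \(\mu(C)\phi(\mu(C))/(\mu(B)\phi(\mu(B)))\) are bounded by powers of \(\mathscr{C}_0\) (using monotonicity properties of \(\phi\in\mathscr G\)), the total stays \(\lesssim\mathscr{C}(T)\).

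The main obstacle is reconciling three requirements at once: keeping \(\widetilde G\) inside \(A^\P\) rather than some higher hull, bounding the number of chain steps uniformly, and making sure the \(\phi\)-ratios do not accumulate. I would handle this by using the maximality threshold \(\frac12\) in the selection and the alternative \(\widetilde B^{[1]}=\widetilde B\) or \(\mu(\widetilde B)\ge2\mu(B)\) from Lemma \ref{Lm 3.5}\eqref{Lm 3.5-1}. Together these make a single step, or at most two, suffice: after one such step \(\widetilde G\) already has measure at least \(2\mu(G)\), so it cannot be mostly in \(F\), and by maximality of \(G\) it is still comparable to \(G\).
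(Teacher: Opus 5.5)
The paper gives no proof of Lemma~\ref{Lm 3.6}; it quotes it from \cite{SZ2026}, so your argument has to stand on its own. Your selection step is sound and does give \eqref{Lm 3.6-1}--\eqref{Lm 3.6-3}. That step consists of near-maximal balls through density points of $F\cap A^\P$ that are more than half inside $F$, Lemma~\ref{Lm more}\eqref{L-B1}, and $\mathcal G=\{G_k^\P\}$; this choice of $\mathcal G$ is forced by \eqref{Lm 3.6-2}, since the Vitali lemma only covers with hulls. With that choice your display already bounds $\mu(\bigcup_{G\in\mathcal G}G^\P)$, so no extra factor $\mathscr C_0$ appears. Had one appeared, the threshold on $\lambda$ could not absorb it, because \eqref{Lm 3.6-3} is linear in $\lambda^{-1}$.

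\textbf{The main gap: no upper bound on $\mu(\widetilde G)$.} The gap is in \eqref{Lm 3.6-4}, at ``maximality of $G$ forces $\mu(B_m)\lesssim\mu(G)$''. Near-maximality gives only a \emph{lower} bound: every ball through $x$ of measure $>r_x$ (your supremum) is at least half outside $F$. It gives no upper bound. Lemma~\ref{Lm 3.5}, like \eqref{BOO-1} behind it, only produces \emph{some} larger ball, whose measure can be an arbitrarily large multiple of $\mu(G)$; think of the parent of a hull-stable ball in a martingale basis. So $\widetilde G\subset A^\P$ is not justified. Your chaining through Lemma~\ref{Lm 3.4}\eqref{Lm 3.4-b} also fails, since its factor is exactly this unbounded measure ratio; the same objection applies to composing two steps of Lemma~\ref{Lm 3.5}.

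\textbf{The fix: trim using monotonicity.} The missing idea is to use Lemma~\ref{Lm 3.3}\eqref{Lm 3.3-a}.
\begin{itemize}
\item Let $G=G_k^\P$ be chosen at $x$. Since $\mu(G^{[3]})\le 2\mathscr C_0^4\lambda^{-1}\mu(A)<\mu(X)$, you may apply \eqref{BOO-1} once, directly to $G^{[2]}$. This is where $\lambda\ge 3\mathscr C_0^4$ is used.
\item This gives $C\supsetneq G^{[2]}$ with $\Delta_T(G^{[2]},C)\le\mathscr C_1(T)$. Moreover $\mu(C)>2\mu(G^\P)$, since otherwise $C\subset G^{[2]}$.
\item Hence $\mu(C)>2\mu(G_k)>r_x$, so $C\not\subset F$.
\item Now take $x\in F\cap A$. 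If $\mu(C)\le 2\mu(A)$, Proposition~\ref{P-ball-basis} gives $C\subset A^\P$, and you set $\widetilde G:=C$.
\item Otherwise $A\subset C^\P$, hence $A^\P\subset C^{[2]}$. Also $G^{[2]}\subset A^\P$, since $\mu(G^{[2]})\le 2\mathscr C_0^3\lambda^{-1}\mu(A)$ and $x\in G^{[2]}\cap A$. Then
\[
\Delta_T(G^{[2]},A^\P)\lesssim\Delta_T(G^{[2]},C^{[2]})\lesssim\mathscr C(T)
\]
by Lemma~\ref{Lm 3.3}\eqref{Lm 3.3-a} and Lemma~\ref{Lm 3.4}\eqref{Lm 3.4-c}, using that $\phi$ is almost decreasing.
\item Finally $A^\P\not\subset F$ because $\mu(F)<\mu(A)$, so you take $\widetilde G:=A^\P$.
\end{itemize}

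\textbf{A second gap: $A^{[2]}$ instead of $A^\P$.} Your hedge about ``replacing $A^\P$ by the hull chain'' is itself a gap. For points of $F\cap(A^\P\setminus A)$ the balls meet only $A^\P$, so Proposition~\ref{P-ball-basis} places them in $A^{[2]}$, not $A^\P$. This holds both in your argument and in the trimmed one. That is a weakening of \eqref{Lm 3.6-4}, not something the threshold $\lambda\ge 3\mathscr C_0^4$ absorbs.

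To see why, take Euclidean balls with $B^\P=5B$. A point of $F$ at distance $\eta$ from the boundary of $A^\P$ forces $G$ (with $G^{[2]}\subset A^\P$) to have radius $\lesssim\eta$. A ball inside $A^\P$ that leaves $F$ may need radius bounded below. So no uniform control of the measure ratio is available, and hence none of $\Delta_T$ via Lemma~\ref{Lm 3.4}.

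You should either cover only $F\cap A$, or prove the version with $A^{[2]}$ and check that its use in Lemma~\ref{Lm 3.8} (which works with $A^{[3]}$) still goes through.
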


	Let \(T\) be a \(\mathbb{B}\)-valued operator such that \(Tf(x) = \|\mathscr{T}f(x)\|_{\mathbb{B}}\). The truncation operator is given by  
	\begin{align*}
		T^*f(x) := \sup_{B \in \mathfrak{B} : x \in B} \left\| \mathscr{T}f(x) - \mathscr{T}(f \, \mathbf{1}_{B^\P})(x) \right\|_{\mathbb{B}}.
	\end{align*}

	\begin{theorem}\label{Th 3.7}
		Let \((X,\,\mathfrak{M},\,\mu)\) be a measure space endowed with a ball-basis \(\mathfrak{B}\), and let \((\Psi,\psi) \in E_{mb}\). Assume that \(T\) is a \(\mathbb{B}\)-valued linear operator satisfying the condition \eqref{BOO-2} and that \(T \in \mathbb{W}_{\Psi, \psi}\). Then for every \(f \in L^{(\Psi, \psi)}(X)\), we have
		\begin{align*}
			\|T^*\|_{L^{(\Psi, \psi)}(X) \to wL^{(\Psi, \psi)}(X)} \lesssim \mathscr{C}_2(T) + \|T\|_{L^{(\Psi, \psi)}(X) \to wL^{(\Psi, \psi)}(X)}.
		\end{align*}
	\end{theorem}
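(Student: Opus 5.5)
We prove Theorem \ref{Th 3.7} by a good-$\lambda$ type comparison in the spirit of Karagulyan's truncation estimate. The aim is to bound $T^*f$ pointwise, on a large part of any ball, by a local maximal quantity plus $T$ applied to a localized function. The weak-type bound then follows from the hypothesis $T\in\mathbb{W}_{\Psi,\psi}$ and the weak boundedness of the Orlicz-Morrey supremum operator $\mathcal{M}_{\mathfrak{B},\Phi,\phi}$.

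\emph{Step 1: pointwise comparison.} Fix $x$ and a ball $B\ni x$. For every $x'\in B$, the linearity of $\mathscr{T}$ and condition \eqref{BOO-2} give
\begin{align*}
\left\|\mathscr{T}f(x)-\mathscr{T}(f\mathbf{1}_{B^\P})(x)\right\|_{\mathbb{B}} \le \mathscr{C}_2(T)\,\langle\|f\|\rangle_{\Phi,\phi,B} + \|\mathscr{T}f(x')\|_{\mathbb{B}} + \|\mathscr{T}(f\mathbf{1}_{B^\P})(x')\|_{\mathbb{B}}.
\end{align*}
We have $\langle\|f\|\rangle_{\Phi,\phi,B}\le \mathcal{M}_{\mathfrak{B},\Phi,\phi}f(x)$, since every $B'\supset B$ contains $x$.

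\emph{Step 2: choosing a good $x'$.} Apply \eqref{D1.6} with $A=B$, the ball $B^\P$ in place of $B$ there, and $\lambda=1/4$. Then, except on a subset of $B$ of measure at most $\mu(B)/4$,
\begin{align*}
\|\mathscr{T}(f\mathbf{1}_{B^\P})(x')\|_{\mathbb{B}} \lesssim \Psi^{-1}(4\psi(\mu(B)))\,\|T\|\,\|f\|_{\Phi,\phi,B}.
\end{align*}
The condition $(\Psi,\psi)\in E_{mb}$ keeps $\Psi^{-1}(4\psi(\mu(B)))$ bounded uniformly in $B$. Here we use that $\psi$ lies in $\mathscr{G}$, so that $\psi(\mu(B))$ and $4\psi(\mu(B))$ are comparable after rescaling the $E_{mb}$ parameter; this is the routine point to check.

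For the other term, set $E_t=\{y:Tf(y)>t\}$. If $x\in B$ and $\mu(B\cap E_t)\le \mu(B)/2$, then we may pick $x'\in B\setminus E_t$ outside the exceptional set of the previous paragraph. This yields
\begin{align*}
T^*f(x)\lesssim \left(\mathscr{C}_2(T)+\|T\|\right)\mathcal{M}_{\mathfrak{B},\Phi,\phi}f(x)+t.
\end{align*}
The remaining balls $B$ with $\mu(B\cap E_t)>\mu(B)/2$ are all contained in $\{\mathcal{M}_{\mathfrak{B}}\mathbf{1}_{E_t}>1/2\}$. This is a set of measure $\lesssim \mu(E_t)$ by the covering Lemma \ref{Lm more}\eqref{L-B1} together with $\mu(G^\P)\le\mathscr{C}_0\mu(G)$, and the same bound holds locally inside any fixed ball.

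\emph{Step 3: summation in the weak Orlicz-Morrey quasinorm.} Take level $s=C(\mathscr{C}_2+\|T\|)\,\sigma$ and $t=\sigma$. We obtain, for every ball $D$,
\begin{align*}
\mu(D\cap\{T^*f>Cs\})\le \mu(D\cap\{\mathcal{M}_{\mathfrak{B},\Phi,\phi}f\gtrsim \sigma\})+C\mu(\widetilde D\cap E_\sigma),
\end{align*}
where $\widetilde D$ is a fixed hull of $D$ (the localization uses Proposition \ref{P-ball-basis}, which absorbs the balls meeting $D$ up to measure $2\mu(D)$, and Lemma \ref{Lm 3.1} for the large balls). Multiplying by $\Psi(\sigma/\|f\|)$ and normalizing by $\mu(D)\psi(\mu(D))$, we bound both terms. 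The first is controlled by the weak-type boundedness of $\mathcal{M}_{\mathfrak{B},\Phi,\phi}$ from $L^{(\Psi,\psi)}$ (noted in Section \ref{Section2.1}, via the $\mathscr{Y}\otimes\mathscr{G}$ condition). The second is controlled by $\|T\|_{L^{(\Psi,\psi)}\to wL^{(\Psi,\psi)}}$, using $\mu(\widetilde D)\psi(\mu(\widetilde D))\lesssim\mu(D)\psi(\mu(D))$, which follows from the doubling-type property of $\psi\in\mathscr{G}$ and $\mu(\widetilde D)\le\mathscr{C}_0^k\mu(D)$.

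\emph{Main obstacle.} The hard part is Step 3's localization. In a Morrey-type norm, the exceptional set from large balls $B$ cannot simply be controlled globally. Balls $B\ni x$ with $x\in D$ that are much larger than $D$ must be handled separately. For these, I would first replace $B$ by an enlarged ball $B^{[2]}$ (or by the sequence from Lemma \ref{Lm 3.5}\eqref{Lm 3.5-3}) and bound the difference $\mathscr{T}(f\mathbf{1}_{B^\P})-\mathscr{T}(f\mathbf{1}_{D^\P})$ on $D$. This is done via Lemma \ref{Lm 3.4}\eqref{Lm 3.4-a}, with the ratio $\mu(B)\phi(\mu(B))/\mu(D)\phi(\mu(D))$ compensated by the $\mathcal{M}$ term. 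I expect this comparison between scales to be where most care is needed.
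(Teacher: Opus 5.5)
The paper does not reprove Theorem \ref{Th 3.7}; it is quoted from the authors' earlier work. So I judge your argument on its own.

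\textbf{Steps 1--2.} These are sound, with two small repairs. First, the uniform bound on $\Psi^{-1}(4\psi(\mu(B)))$ comes from the concavity of $\Psi^{-1}$, namely $\Psi^{-1}(cu)\le c\,\Psi^{-1}(u)$ for $c\ge1$, not from $\psi\in\mathscr{G}$. Second, the level of $E_t$ must scale with the operator constants (take $t\sim\|T\|\sigma$). With $t=\sigma$, neither the good-ball contradiction nor the final $\Psi(\cdot)$-normalization is homogeneous.

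\textbf{The gap: Step 3.} The localized inequality
\[
\mu(D\cap\{T^*f>Cs\})\le\mu(D\cap\{\mathcal{M}_{\mathfrak{B},\Phi,\phi}f\gtrsim\sigma\})+C\mu(\widetilde D\cap E_\sigma)
\]
is asserted, not proved. Proposition \ref{P-ball-basis} only captures balls $B\ni x$ with $\mu(B)\le2\mu(D)$, and Lemma \ref{Lm 3.1} localizes nothing. For $B\ni x\in D$ with $\mu(B)\gg\mu(D)$, the condition $\mu(B\cap E_\sigma)>\mu(B)/2$ is compatible with $E_\sigma\cap\widetilde D=\varnothing$. Such bad balls therefore cannot be charged to $\mu(\widetilde D\cap E_\sigma)$.

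Your proposed repair fails. Lemma \ref{Lm 3.4}\eqref{Lm 3.4-a} bounds $\mathscr{T}(f\mathbf{1}_{B^\P})-\mathscr{T}(f\mathbf{1}_{D^\P})$ on $D$ by $(\mathscr{C}_2(T)+\|T\|)\frac{\mu(B)\phi(\mu(B))}{\mu(D)\phi(\mu(D))}\|f\|_{\Phi,\phi,B^\P}$. This factor is in general unbounded over the balls in the supremum defining $T^*$. Meanwhile $\mathcal{M}_{\mathfrak{B},\Phi,\phi}f(x)$ only dominates $\|f\|_{\Phi,\phi,B^\P}$ and gives no compensating decay: with $\phi\equiv1$ and $f\equiv1$ the bound grows like $\mu(B)/\mu(D)$ while $\mathcal{M}_{\mathfrak{B},\Phi,\phi}f$ is constant. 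Chaining through Lemma \ref{Lm 3.5} is not available either. It uses \eqref{BOO-1}, which puts $\mathscr{C}_1(T)$ into a bound that may only involve $\mathscr{C}_2(T)+\|T\|$, and the errors accumulate along the chain.

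\textbf{How to close it.} Either of two routes works.

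(i) Use $(\Psi,\psi)\in E_{mb}$ in full. It forces $c_1\le\psi(\mu(B))\le c_2$ for all $B\in\mathfrak{B}$. Hence the weak bound for $Tf$ gives $\mu(B\cap\{Tf>s_0\|T\|\,\|f\|_{L^{(\Psi,\psi)}}\})\le\mu(B)/2$ for \emph{every} ball, once $\Psi(s_0)$ is a large multiple of $c_2$. At that level there are no bad balls at all. Combine this with the pointwise bound $\mathcal{M}_{\mathfrak{B},\Phi,\phi}f\le\mathscr{K}\|f\|_{L^{(\Psi,\psi)}}$, which is the ball-by-ball form of Lemma \ref{PPppw}. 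Your Steps 1--2 then give $T^*f(x)\lesssim(\mathscr{C}_2(T)+\|T\|)\|f\|_{L^{(\Psi,\psi)}}$ for every $x$. Finally, $\|g\|_{wL^{(\Psi,\psi)}}\le\|g\|_{L^\infty}\sup_B 1/\Psi^{-1}(\psi(\mu(B)))\lesssim\|g\|_{L^\infty}$, again by $E_{mb}$. No localization is needed. This route also avoids your appeal to Section \ref{Section2.1}, which states the boundedness of $\mathcal{M}_{\mathfrak{B},\Phi,\phi}$ under $(\Phi,\phi)\in E_{mb}$, an assumption Theorem \ref{Th 3.7} does not make.

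(ii) Keep a local argument, but change where the comparison point lives.
\begin{itemize}
\item If $x\in D$, $B\ni x$ and $\mu(B)>2\mu(D)$, then $D\subset B^\P$. Apply \eqref{BOO-2} on the ball $B^\P$, with $x'\in D\setminus E_t$ chosen outside the exceptional set of \eqref{D1.6} for $A=D$ and the ball $B^{[2]}$. The right side of \eqref{D1.6} involves only $\|f\|_{\Phi,\phi,D}\le\mathcal{M}_{\mathfrak{B},\Phi,\phi}f(x)$.
\item The single change of truncation from $B^{[2]}$ to $B^\P$ costs $\Delta_T(B,B^\P)$. Lemma \ref{Lm 3.4}\eqref{Lm 3.4-a} bounds this with scale ratio at most $\mathscr{C}_0C_{dc}^{-1}$.
\item Now only the goodness of $D$ itself matters. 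If $D$ is bad, then $\mu(D)<2\mu(D\cap E_t)$ directly.
\item The remaining balls, with $\mu(B)\le2\mu(D)$, lie in $D^\P$, where your covering argument applies.
\end{itemize}
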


	Moreover, for the subsequent analysis, we need to introduce the following operator:
	\begin{align*}
		\Upsilon f(x) := \max\left\{|Tf(x)|, \, T^*f(x), \, \mathscr{C}(T) \mathcal{M}_{\mathfrak{B}, \Phi, \phi} f(x) \right\}.
	\end{align*}
	Hence, it is immediate that
	\begin{align}\label{3.1}
		\|\Upsilon\| := \|\Upsilon\|_{L^{ (\Psi, \psi)} \to wL^{(\Psi, \psi)}} \lesssim \mathscr{C}(T)
	\end{align}
	For a measure space \((X,\,\mathfrak{M},\,\mu)\) endowed with a ball-basis \(\mathfrak{B}\), we fix in the sequel a ball \(B_0 \in \mathfrak{B}\) satisfying
	\begin{align}\label{3.2}
		{\rm supp}\,f \subset B_0^{[3]} , 
	\end{align}
	which entails no loss of generality for our analysis.

	\begin{lemma}\label{Lm 3.8}
		Let \((X,\,\mathfrak{M},\,\mu)\) be a measure space endowed with a ball-basis \(\mathfrak{B}\). Let \(\Psi,\Phi\in\mathscr{Y}\) and \(\psi, \phi \in \mathscr{G}\). Suppose \(T\) is a \(\mathbb{B}\)-valued \(\Psi\)-BOO and \(T \in \mathbb{W}_{\Psi, \psi}\). Take \(\lambda \geq 3 \mathscr{C}_0^4\). For the ball \(B_0\) in \eqref{3.2}, there exists a family \(\mathcal{G} = \mathcal{G}(B_0) \subset \mathfrak{B}\) satisfying the following conditions:
		
		\begin{list}{\rm (\theenumi)}{\usecounter{enumi}\leftmargin=1.2cm \labelwidth=1cm \itemsep=0.2cm \topsep=.2cm \renewcommand{\theenumi}{\alph{enumi}}}
			\item\label{Lm 3.8-a} For any \(A \subset \mathcal{G}\), there exists \(\mathcal{F}(A) \subset \mathcal{G}\) such that:
			
			\begin{list}{\rm (\theenumi\theenumii)}{\usecounter{enumii}\leftmargin=.4cm \labelwidth=.8cm\itemsep=0.2cm\topsep=.1cm\renewcommand{\theenumii}{-\arabic{enumii}}}
				\item\label{Lm 3.8-a1} \(A^\P \cap B \ne \varnothing\) for all \(B \in \mathcal{F}(A)\),
				
				\vspace{5pt}
				
				\item\label{Lm 3.8-a2} \(\mu\left(\bigcup_{B \in \mathcal{F}(A)}B^\P\right) \leq 3 \lambda^{-1} \mathscr{C}_0^2 \mu(A)\),
				
				\vspace{5pt}
				
				\item\label{Lm 3.8-a3} \(\Upsilon(f \, \mathbf{1}_{A^{[3]}})(x) \lesssim \mathscr{C}(T) \Psi^{-1}(\lambda \phi(\mu(A))) \|f\|_{\Phi, \phi, A^{[3]}}, \quad a.e. \; x \in A^\P \setminus \bigcup_{B \in \mathcal{F}(A)}B\).
			\end{list}
			
			\vspace{5pt}
			
			\item\label{Lm 3.8-b} For any \(B \in \mathcal{F}(A)\), there exist \(\widetilde{B} \in \mathfrak{B}\) and \(\xi \in \widetilde{B}\), which yields the following result:
			\begin{list}{\rm (\theenumi\theenumii)}{\usecounter{enumii}\leftmargin=.4cm \labelwidth=.8cm\itemsep=0.2cm\topsep=.1cm\renewcommand{\theenumii}{-\arabic{enumii}}}
				
				\vspace{5pt}
				
				\item\label{Lm 3.8-b1} \(B^{[2]} \subset \widetilde{B} \subset A^\P\),
				
				\vspace{5pt}
				
				\item\label{Lm 3.8-b2} \(\Upsilon(f \, \mathbf{1}_{A^{[3]}})(\xi) \lesssim \mathscr{C}(T) \Psi^{-1}(\lambda \phi(\mu(A))) \|f\|_{\Phi, \phi, A^{[3]}}\),
				
				\vspace{5pt}
				
				\item\label{Lm 3.8-b3} \(\|\mathscr{T}(f\,\mathbf{1}_{\widetilde{B}^ \P})(x) - \mathscr{T}(f\,\mathbf{1}_{B^{[3]}})(x)\|_{\mathbb{B}} \lesssim \mathscr{C}(T) \Psi^{-1}(\lambda \phi(\mu(A))) \|f\|_{ \Phi,\phi,A^{[3]}}, \quad\text{for all} \; x \in B^{[2]}\).
			\end{list}
		\end{list}
	\end{lemma}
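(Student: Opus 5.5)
The lemma is the Orlicz--Morrey version of Karagulyan's key stopping-time lemma, and I would prove it the same way. Fix $A\in\mathfrak{B}$ (the statement writes ``$A\subset\mathcal{G}$''; I read it as $A\in\mathfrak{B}$, with $\mathcal{G}$ collecting all the balls produced below). Put $g:=f\,\mathbf{1}_{A^{[3]}}$ and define the exceptional set
\[
F:=\Bigl\{x\in A^\P:\ \Upsilon g(x)>c\,\mathscr{C}(T)\,\Psi^{-1}\bigl(\lambda\phi(\mu(A))\bigr)\|f\|_{\Phi,\phi,A^{[3]}}\Bigr\},
\]
with $c$ a large absolute constant.

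\textbf{Step 1: $F$ is small.} By the weak-type bound \eqref{3.1} for $\Upsilon$, applied through condition \eqref{D1.6} (the $\mathbb{W}_{\Psi,\psi,\lambda}$ hypothesis) on $A^{[3]}$, I expect $\mu(F)\le\lambda^{-1}\mu(A)$ once $c$ is large. The level $\Psi^{-1}(\lambda\phi(\mu(A)))$ is exactly what \eqref{D1.6} produces after replacing $\lambda$ by $\lambda^{-1}$. We also need $\psi(\mu(A))$ versus $\phi(\mu(A))$ and the ball $A$ versus $A^{[3]}$. These are handled by the $\mathscr{Y}\otimes\mathscr{G}$ comparison $\psi\le C\phi$ and by Lemma \ref{lemma:AB} with $\omega\equiv1$ together with $\mu(A^{[3]})\le\mathscr{C}_0^3\mu(A)$. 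The resulting constants $\mathscr{C}_0^3$ and $\mathscr{K}$ are absorbed into $c$.

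\textbf{Step 2: the covering, giving (a1)--(a3).} If $F=\varnothing$, set $\mathcal{F}(A)=\varnothing$. Otherwise apply Lemma \ref{Lm 3.6} to the pair $(F,A)$; this is allowed because $\lambda\ge3\mathscr{C}_0^4$. Let $\mathcal{F}(A)$ be the family $\mathcal{G}$ it returns. Property \ref{Lm 3.6-1} gives $G\cap A^\P\neq\varnothing$, which is (a1). Property \ref{Lm 3.6-3} gives (a2) directly. For (a3), property \ref{Lm 3.6-2} says $F\cap A^\P\subset\bigcup G$ a.s., so on $A^\P\setminus\bigcup_{G\in\mathcal{F}(A)}G$ we have $\Upsilon g\le$ threshold a.e. That is exactly (a3).

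\textbf{Step 3: the parts (b1)--(b3).} For $B\in\mathcal{F}(A)$, take $\widetilde B$ from Lemma \ref{Lm 3.6}\ref{Lm 3.6-4}. Then $B^{[2]}\subset\widetilde B\subset A^\P$, which is (b1). Since $\widetilde B\not\subset F$, there is $\xi\in\widetilde B\setminus F$, and since $\widetilde B\subset A^\P$, the definition of $F$ gives (b2).

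For (b3), fix $x\in B^{[2]}$. Since $\widetilde B\subset A^\P$ and $B^{[3]}\subset\widetilde B^\P\subset A^{[2]}$, every truncation of $f$ at these scales equals the same truncation of $g$. Split
\[
\mathscr{T}(g\mathbf{1}_{\widetilde B^\P})(x)-\mathscr{T}(g\mathbf{1}_{B^{[3]}})(x)
=\bigl[\mathscr{T}(g\mathbf{1}_{\widetilde B^\P})-\mathscr{T}(g\mathbf{1}_{(B^{[2]})^\P})\bigr](x).
\]
The definition of $\Delta_T$ bounds this by $\Delta_T(B^{[2]},\widetilde B)\,\|g\|_{\Phi,\phi,\widetilde B^\P}$. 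Lemma \ref{Lm 3.6}\ref{Lm 3.6-4} gives $\Delta_T(B^{[2]},\widetilde B)\lesssim\mathscr{C}(T)$. It remains to bound $\|g\|_{\Phi,\phi,\widetilde B^\P}$.

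\textbf{Main obstacle.} The direct estimate via Lemma \ref{lemma:AB} carries a factor depending on $\mu(\widetilde B^\P)$ relative to $\mu(A)$, which is not controlled. I would instead route through $\xi$. Since $\xi\in\widetilde B\subset\widetilde B^\P$, we have $\mathscr{C}(T)\|g\|_{\Phi,\phi,\widetilde B^\P}\le\mathscr{C}(T)\mathcal{M}_{\mathfrak{B},\Phi,\phi}g(\xi)\le\Upsilon g(\xi)$. Step 3's bound at $\xi$ then gives (b3), except that $\mathscr{C}(T)$ appears squared.

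To remove the extra factor, I would replace $\mathscr{C}(T)\mathcal{M}$ by $\mathcal{M}$ in the definition of $F$ (equivalently, split $F$ as the union of the level sets of $|Tg|$, of $T^*g$, and of $\mathcal{M}g$ at height $\Psi^{-1}(\lambda\phi)\|f\|$). This keeps Step 1 intact, because the $E_{mb}$ hypothesis makes $\mathcal{M}_{\mathfrak{B},\Phi,\phi}$ weakly bounded with constant $\lesssim1$. With that change, (b3) holds with a single factor $\mathscr{C}(T)$.
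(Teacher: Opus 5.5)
Your argument is essentially the paper's. The paper imports the Karagulyan-type proof from its earlier work, and you follow the same steps. Take $F$ to be the level set in $A^\P$ of $\Upsilon(f\,\mathbf{1}_{A^{[3]}})$ at height $\sim\mathscr{C}(T)\Psi^{-1}(\lambda\phi(\mu(A)))\|f\|_{\Phi,\phi,A^{[3]}}$. Get $\mu(F)\le\lambda^{-1}\mu(A)$ from \eqref{3.1} and the $\mathbb{W}_{\Psi,\psi}$ hypothesis, and apply Lemma \ref{Lm 3.6} to $(F,A)$ with the same $\lambda$ to obtain $\mathcal{F}(A)$ and the balls $\widetilde B$. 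Then pick $\xi\in\widetilde B\setminus F$ for (b2)--(b3).

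Your ``main obstacle'' is unfounded, and no $\mathscr{C}(T)^2$ ever appears. Indeed, $\Delta_T(B^{[2]},\widetilde B)\,\|g\|_{\Phi,\phi,\widetilde B^\P}\lesssim\mathscr{C}(T)\,\mathcal{M}_{\mathfrak{B},\Phi,\phi}g(\xi)\le\Upsilon g(\xi)$, and $\Upsilon g(\xi)$ is bounded by the threshold, which carries a single factor $\mathscr{C}(T)$. That is exactly why $\mathscr{C}(T)\mathcal{M}_{\mathfrak{B},\Phi,\phi}$ is built into $\Upsilon$. Your ``modified'' $F$ is therefore literally the same set as the original one.
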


	For the convenience of subsequent exposition, we introduce the following notation:
	
	\begin{itemize}
		\item The set \(\mathcal{F}(A)\) in Lemma \ref{Lm 3.8} denotes the stopping time balls associated with \(A\).
		
		\item The family \(\mathcal{F}_k(A)\) denotes the collection of the \(k\)-th generation stopping time balls associated with \(A\). It is defined recursively by
		\begin{align*}
			\mathcal{F}_0(B_0) := \{B_0\}, \quad \mathcal{F}_1(B_0) := \mathcal{F}(B_0), \quad \text{and} \quad \mathcal{F}_{k+1}(B_0) := \bigcup_{B \in \mathcal{F}_k(B_0)} \mathcal{F}(B), \;\; k \geq 1.
		\end{align*}
		
		\item The set \(\mathcal{G}\) denotes the collection of all generation balls, given by
		\begin{align*}
			\mathcal{G} = \mathcal{G}(B_0) := \bigcup_{k \geq 0} \mathcal{F}_k (B_0).
		\end{align*}
	\end{itemize}
	Given \(A, B \in \mathcal{G}\) and \(k \geq 0\), if \(B \in \mathcal{F}_k(A)\), we write \(\mathcal{F}^k(B) = A\). In which case we say that \(A\) is the \(k\)-th generation ancestor of \(B\), or that \(B\) is one of the \(k\)-th generation descendants of \(A\). Repeated application of Lemma \ref{Lm 3.8} part \eqref{Lm 3.8-a2} yields
	\begin{align}\label{3.33}
		\mu\left(\bigcup_{B \in \mathcal{F}_k(A)}B \right) \leq (3 \lambda^{-1} \mathscr{C}_0^2)^k \mu(A), \quad \text{for all} \; k \geq 1.
	\end{align}

	\begin{lemma}\label{Lm 3.9}
		Let \((X,\,\mathfrak{M},\,\mu)\) be a measure space endowed with a ball-basis \(\mathfrak{B}\) and \(\lambda > 3\mathscr{C}_0^6\). Fix \(B_0 \in \mathfrak{B}\) satisfying \eqref{3.2}. Then the following properties hold:
		\begin{list}{\rm (\theenumi)}{\usecounter{enumi}\leftmargin=1.2cm \labelwidth=1cm \itemsep=0.2cm \topsep=.2cm \renewcommand{\theenumi}{\alph{enumi}}}
			\item\label{Lm 3.9-1} When \(\lambda\) is sufficiently large, there exist two types of sparse families within the collection \(\mathcal{G}\) from Lemma \ref{Lm 3.8}, namely: a \(2^{-1}\)-sparse family and a \(2^{-1}\mathscr{C}_0^{-3}\)-sparse family.
			
			\item\label{Lm 3.9-2} For any ball \(B\) belonging to the \(2^{-1}\)-sparse family, there exists a sequence \(\{B_j\}_{j = 0}^k\) that is a subset of the \(2^{-1}\)-sparse family, such that \(B_{j+1} \in \mathcal{F}(B_j)\) for \(j = 0, 1, ..., k-1\) and \(B_k = B\). There exist balls \(\widetilde{B}_j\) and points \(\xi_j \in \widetilde{B}_{j+1}\) such that the following estimates hold:
			\begin{align*}
				&B_{j + 1}^{[2]} \subset \widetilde{B}_{j + 1} \subset B_j^\P,  \\
				&\Upsilon(f \, \mathbf{1}_{B_j^{[3]}})(\xi_j) \lesssim \mathscr{C}(T) \Psi^{-1}(\lambda \phi(\mu(B_j))) \|f\|_{\Phi, \phi, B_j^{[3]}},  \\
				&\left\| \mathscr{T}(f \, \mathbf{1}_{\widetilde{B}_{j + 1}^\P})(x) - \mathscr{T}(f \, \mathbf{1}_{B_{j + 1}^{[3]}})(x) \right\|_{\mathbb{B}} \lesssim \mathscr{C}(T) \Psi^{-1}(\lambda \phi(\mu(B_j))) \|f\|_{ \Phi, \phi, B_j^{[3]}}, \;\; \text{for all} \; x \in B_0.
			\end{align*}
		\end{list}
	\end{lemma}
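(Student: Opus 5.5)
We prove Lemma \ref{Lm 3.9} by following Karagulyan's construction (and Cao's refinement), adapted to the Orlicz--Morrey averages. Everything happens inside the stopping-time collection \(\mathcal{G}=\bigcup_{k\ge0}\mathcal{F}_k(B_0)\) built from Lemma \ref{Lm 3.8}. The basic quantitative input is \eqref{3.33}: for every \(A\in\mathcal{G}\) and every \(k\ge1\),
\[
\mu\Bigl(\bigcup_{B\in\mathcal{F}_k(A)}B\Bigr)\le(3\lambda^{-1}\mathscr{C}_0^2)^k\mu(A).
\]
Since \(\lambda>3\mathscr{C}_0^6\), the ratio \(q:=3\lambda^{-1}\mathscr{C}_0^2\) satisfies \(q<\mathscr{C}_0^{-4}\le1\). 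Increasing \(\lambda\) if necessary (this is the ``sufficiently large'' clause in (a)), we may assume in addition that \(\sum_{k\ge1}q^k\le\frac12\).

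For part (a), the \(2^{-1}\)-sparse family is \(\mathcal{G}\) itself, indexed with multiplicity by generations. For \(A\in\mathcal{G}\) set
\[
E_A:=A\setminus\bigcup_{B\in\mathcal{F}(A)}B .
\]
Then \(\mu(E_A)\ge(1-q)\mu(A)\ge\frac12\mu(A)\).

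The delicate point is pairwise disjointness. The balls of \(\mathcal{F}(A)\) are only known to meet \(A^\P\); they need not be contained in \(A\). So a later-generation ball may leave \(A\) and land in a set \(E_{A'}\) belonging to another branch. We handle this in the standard way.
\begin{enumerate}
\item We remove from \(A\) all descendants of all generations, defining \(E_A:=A\setminus\bigcup_{k\ge1}\bigcup_{B\in\mathcal{F}_k(A)}B\). This still gives \(\mu(E_A)\ge\bigl(1-\sum_k q^k\bigr)\mu(A)\ge\frac12\mu(A)\).
\item We split \(\mathcal{G}\) into two subfamilies according to the geometry of the hulls \(A^\P\) versus \(A^{[2]}\) or \(A^{[3]}\). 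The second subfamily consists of balls whose sets must be measured inside \(A^{[3]}\).
\end{enumerate}
On the second subfamily we take \(E\)-sets of the form \(A^{[3]}\setminus\bigcup(\cdots)\). This is where the weaker constant appears: since \(\mu(A^{[3]})\le\mathscr{C}_0^3\mu(A)\), the sparseness constant degrades to \(2^{-1}\mathscr{C}_0^{-3}\). The condition \(\lambda>3\mathscr{C}_0^6\) is exactly what makes the removed mass, now controlled by \(q\mathscr{C}_0^{4}\), strictly smaller than \(1\).

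We also need the sets \(E_A\) and \(E_{A'}\) to be disjoint for distinct \(A,A'\) of the same generation. This follows from the disjoint-selection construction of \(\mathcal{F}(A)\) in Lemma \ref{Lm 3.6} via Lemma \ref{Lm more}\eqref{L-B1}. That selection also gives \(\mu(\bigcup G^\P)\) control, which bounds the overlap.

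For part (b), fix \(B\) in the \(2^{-1}\)-sparse family. By the construction of \(\mathcal{G}\), \(B\in\mathcal{F}_k(B_0)\) for some \(k\), so we can trace a chain of ancestors \(B_j=\mathcal{F}^{k-j}(B)\), for \(j=0,\dots,k\). These satisfy \(B_{j+1}\in\mathcal{F}(B_j)\) and \(B_k=B\), and every \(B_j\) belongs to the same family. For each \(j\), Lemma \ref{Lm 3.8}\eqref{Lm 3.8-b} applied with \(A=B_j\) and with the ball \(B_{j+1}\in\mathcal{F}(B_j)\) produces a ball \(\widetilde B_{j+1}\) and a point \(\xi_j\in\widetilde B_{j+1}\). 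Parts \eqref{Lm 3.8-b1}, \eqref{Lm 3.8-b2} and \eqref{Lm 3.8-b3} then give the three displayed estimates directly.

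One adjustment is needed. Lemma \ref{Lm 3.8}\eqref{Lm 3.8-b3} yields the last estimate for \(x\in B_{j+1}^{[2]}\), whereas Lemma \ref{Lm 3.9} states it for \(x\in B_0\). We read this as meaning \(x\) in the relevant ball of the chain; the estimate is used only at points lying in the descendant, so we restrict to \(x\in B_{j+1}^{[2]}\).

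The main obstacle is part (a): choosing the sets \(E_A\) so that they are genuinely pairwise disjoint across different branches even though stopping balls are not nested, and checking the geometric-series bound with the precise constants. We would follow Cao's argument, which attains the sparseness constant \((2\mathscr{C}_0^3)^{-1}\), taking care of the extra factor \(\phi\) only through \eqref{3.33}, which is already free of \(\phi\).
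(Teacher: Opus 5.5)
The paper does not reprove this lemma; it imports it from the authors' earlier work, which follows Karagulyan and Cao. So your argument has to stand on its own. Part (b) does, but part (a), which is the real content of the lemma, does not.

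\textbf{The gap in (a).} Your sets $E_A=A\setminus\bigcup_{k\ge1}\bigcup_{B\in\mathcal{F}_k(A)}B$ separate $A$ only from its own descendants. For non-comparable nodes (two balls of the same $\mathcal{F}(P)$, or children of different parents $P\neq P'$) nothing is removed. So $E_A\cap E_{A'}$ is $A\cap A'$ minus descendants, which is in general nonempty.

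The same-generation disjointness you invoke is not available:
\begin{itemize}
\item Lemmas \ref{Lm 3.6} and \ref{Lm 3.8} give no disjointness or bounded overlap for the balls of $\mathcal{F}(A)$.
\item Lemma \ref{Lm more}\eqref{L-B1} produces disjoint balls whose \emph{hulls} cover. It therefore does not give a disjoint family with the covering property \eqref{Lm 3.6-2}.
\item $\mathcal{F}(P)$ and $\mathcal{F}(P')$ are built independently inside $P^{\P}$ and $(P')^{\P}$, which may overlap.
\end{itemize}

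More fundamentally, no argument using only \eqref{Lm 3.8-a2} and \eqref{3.33} can work. These bound the measure of a \emph{union}. A $\frac12$-sparse family $\mathcal{S}$, however, must satisfy $\frac12\sum_{B\in\mathcal{S}'}\mu(B)\le\mu\bigl(\bigcup_{B\in\mathcal{S}'}B\bigr)$ for every $\mathcal{S}'\subset\mathcal{S}$. This fails as soon as one $\mathcal{F}(A)$ contains many heavily overlapping balls. With your multiplicity indexing it is even worse: a ball occurring three times in the tree would need three disjoint subsets each of measure at least $\frac12\mu(B)$.

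Your ``second subfamily'' and its sets $A^{[3]}\setminus\bigcup(\cdots)$ are never defined. The remark $q\mathscr{C}_0^4=3\lambda^{-1}\mathscr{C}_0^6<1$ is not attached to any disjointness statement. Deferring the step to Cao's argument leaves the lemma unproved.

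\textbf{What is missing.} You need a rule for $E_B$ that also separates balls from different branches, and this requires information beyond union bounds. One route would combine two ingredients:
\begin{itemize}
\item a packing bound on $\sum_G\mu(G)$ over the stopping balls, coming from the Vitali-type selection inside the proof of Lemma \ref{Lm 3.6};
\item a global ordering of $\mathcal{G}$ (by generation, or by the rank $r(B)$ of Lemma \ref{Lm 3.10}) in which each $E_B$ avoids all later balls meeting $B$, with the removed mass shown to be at most $\frac12\mu(B)$.
\end{itemize}
Whatever family you obtain must also be closed under taking parents, because (b) needs the whole chain $B_0,\dots,B_k$ inside it. That closure is automatic only for $\mathcal{G}$ itself, which you have not shown to be sparse.

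\textbf{Part (b).} Once (a) is in place, (b) is exactly Lemma \ref{Lm 3.8}\eqref{Lm 3.8-b} applied with $A=B_j$ and $B=B_{j+1}$. Reading ``$x\in B_0$'' as ``$x\in B_{j+1}^{[2]}$'' is right, but you should say why this suffices for Lemma \ref{Lm 3.11}. Iterating $B_{i+1}^{[2]}\subset\widetilde{B}_{i+1}\subset B_i^{\P}$ gives $B_k\subset B_k^{[2]}\subset\widetilde{B}_k\subset B_{k-1}^{\P}\subset B_{k-1}^{[2]}\subset\cdots$. Hence $B_k\subset B_{j+1}^{[2]}$ for every $j<k$.
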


	\subsection{Proof of Theorem \ref{main}}\label{Section4.3}~

	In this subsection, we aim to prove Theorem \ref{main}.
	
	\begin{proof}[Proof of Theorem \ref{main}]
		
		In \cite{SZ2026}, we proved \eqref{ine main1}. Here we prove \eqref{ine main2}. For any \(B_0 \in \mathfrak{B}\) satisfying \eqref{3.2}, the estimate \eqref{ine main2} is equivalent to
		\begin{align}\label{3.4}
			\left\| [\mathscr{T}, b] (f \, \mathbf{1}_{B_0^{[3]}}) (x) \right\|_\mathbb{B} \lesssim \mathscr{C} (T) \left[ \mathcal{A}^b_{\mathcal{S}_1, \Phi,\phi} f(x) + \mathcal{A}^b_{\mathcal{S}_2, \Phi,\phi} f(x) \right], \quad a.e. \; x \in B,
		\end{align}
		where both \(\mathcal{S}_1\) and \(\mathcal{S}_2\) are \(\frac{1}{2\mathscr{C}_0^3}\)-sparse. For the above \(B_0\) and for \(\lambda > 3\mathscr{C}_0^6\), Lemma \ref{Lm 3.9} part \eqref{Lm 3.9-1} yields that \(\mathcal{S}_1\) and \(\mathcal{S}_2\) are \(\frac{1}{2\mathscr{C}_0^3}\)-sparse and that there exists a family \(\mathcal{S}'\) which is \(\frac{1}{2}\)-sparse. Moreover, Lemma \ref{Lm 3.9} part \eqref{Lm 3.9-2} implies that for every ball in \(\mathcal{S}'\), one can find a chain \(\{B_j\}_{j=0}^k \subset \mathcal{S}'\) such that
		\begin{align*}
			B_{j+1} \in \mathcal{F}(B_j), \; B_k = B, \; \text{for } j = 0, 1, ..., k-1.
		\end{align*}
		and \(B_{j+1}^{[2]} \subset \widetilde{B}_{j+1} \subset B_j^\P\). The following lemma was proved in \cite{SZ2026}.
		
		\begin{lemma}\label{Lm 3.10}
			For any \(B \in \mathcal{S}'\), define \(r(B) = [\log_{\mathscr{C}_0^2} \mu(B)]\). Then there exists a set \(F_0\) of measure zero such that the following estimate holds:
			\begin{align}\label{3.5}
				\Upsilon(f \, \mathbf{1}_{B^{[3]}})(x) \lesssim \mathscr{C}(T) \Psi^{-1}(\lambda \phi(\mu(B))) \|f\|_{\Phi, \phi, B^{[3]}}, \quad x \in \left(B^\P \setminus \bigcup_{G \in \mathcal{S}': r(G) < r(B)} G^\P\right) \setminus F_0.
			\end{align}
		\end{lemma}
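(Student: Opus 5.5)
The plan is to argue by contradiction and inherit the exceptional null sets from Lemma \ref{Lm 3.8}, using the stopping-time structure of \(\mathcal{G}\) and the definition of \(r(\cdot)\).

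\emph{Step 1: the null set.} For each \(A \in \mathcal{G}\), part \eqref{Lm 3.8-a3} of Lemma \ref{Lm 3.8} gives the estimate
\begin{align*}
\Upsilon(f \, \mathbf{1}_{A^{[3]}})(x) \lesssim \mathscr{C}(T) \Psi^{-1}(\lambda \phi(\mu(A))) \|f\|_{\Phi, \phi, A^{[3]}}
\end{align*}
for all \(x \in A^\P \setminus \bigcup_{G \in \mathcal{F}(A)} G\), except on a null set \(N_A\). The family \(\mathcal{G}\) is countable, because each \(\mathcal{F}(A)\) is a countable covering family produced by Lemma \ref{Lm 3.6} and Lemma \ref{Lm more}\eqref{L-B1}. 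I also include in the null set the a.s.\ discrepancy from Lemma \ref{Lm 3.6}\eqref{Lm 3.6-2}. Hence \(F_0 := \bigcup_{A \in \mathcal{G}} N_A\) still has measure zero.

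\emph{Step 2: the set is contained in the good region of Step 1.} Fix \(B \in \mathcal{S}'\) and take \(x \in B^\P \setminus \bigcup_{G \in \mathcal{S}':\, r(G) < r(B)} G^\P\) with \(x \notin F_0\). Suppose, for contradiction, that \(x \in G\) for some \(G \in \mathcal{F}(B)\).
\begin{itemize}
\item By Lemma \ref{Lm 3.8}\eqref{Lm 3.8-a2}, \(\mu(G) \le \mu(\bigcup_{\mathcal{F}(B)} G'^\P) \le 3\lambda^{-1}\mathscr{C}_0^2 \mu(B)\).
\item Since \(\lambda > 3\mathscr{C}_0^6\), this gives \(\mu(G) < \mathscr{C}_0^{-4}\mu(B)\), and therefore \(r(G) \le r(B) - 2 < r(B)\), using \(r = [\log_{\mathscr{C}_0^2}\mu]\).
\item If \(G \in \mathcal{S}'\), then \(x \in G \subset G^\P\) lies in the excluded union, which is a contradiction.
\end{itemize}

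\emph{Step 3: the main obstacle.} The difficulty is the case \(G \in \mathcal{F}(B) \setminus \mathcal{S}'\). Here I use how \(\mathcal{S}'\) was extracted in Lemma \ref{Lm 3.9}\eqref{Lm 3.9-1}. The balls are grouped by the scale \(r(\cdot)\), and a ball outside \(\mathcal{S}'\) is discarded only because it is absorbed, that is, covered up to measure zero, by hulls \(G'^\P\) of selected balls \(G' \in \mathcal{S}'\) with \(r(G') \le r(G) < r(B)\).

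I would write out that selection explicitly and check that every point of \(G\) outside a further null set (added to \(F_0\)) lies in some such \(G'^\P\). This again contradicts the choice of \(x\). If the selection instead discards balls comparable in size, I would replace \(G\) by its ancestor chain and use \(\mu(G'^\P) \le \mathscr{C}_0 \mu(G')\) together with the gap of two levels in \(r\) to keep the strict inequality \(r(G') < r(B)\). The two-level slack from \(\lambda > 3\mathscr{C}_0^6\) is exactly what absorbs one application of \(\P\).

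\emph{Step 4: conclusion.} Steps 2 and 3 show that \(x \in B^\P \setminus \bigcup_{G \in \mathcal{F}(B)} G\) and \(x \notin F_0\). Lemma \ref{Lm 3.8}\eqref{Lm 3.8-a3} with \(A = B\) then gives \eqref{3.5}.
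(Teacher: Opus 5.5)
Your reduction is natural: \eqref{3.5} follows from Lemma \ref{Lm 3.8}\eqref{Lm 3.8-a3} with \(A=B\) once you know that, up to a null set, \(\bigcup_{G\in\mathcal{F}(B)}G\subset\bigcup\{(G')^\P: G'\in\mathcal{S}',\ r(G')<r(B)\}\). Your Step 2 correctly handles the stopping balls that belong to \(\mathcal{S}'\), since they satisfy \(r(G)\le r(B)-2\). But that inclusion is the entire content of the lemma, and Step 3 does not prove it. It postulates a mechanism by which \(\mathcal{S}'\) was extracted and then lists what you would check. Nothing you may use implies the inclusion. Lemma \ref{Lm 3.9} gives \(\mathcal{S}'\) only two properties: it is \(\frac12\)-sparse, and each member is joined to \(B_0\) by a chain inside \(\mathcal{S}'\). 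The family \(\mathcal{S}'=\{B_0\}\) has both. For that family \eqref{3.5} would give the good bound for \(\Upsilon(f\mathbf{1}_{B_0^{[3]}})\) a.e.\ on all of \(B_0^\P\), which fails in general: \(\mathscr{C}(T)\mathcal{M}_{\mathfrak{B},\Phi,\phi}(f\mathbf{1}_{B_0^{[3]}})\) can exceed the right-hand side by an arbitrary factor on a small ball where \(f\) concentrates. So the lemma rests on a specific absorption property of the construction of \(\mathcal{S}'\). This paper does not reproduce that construction; it only cites \cite{SZ2026}. A proof has to state the construction and verify the property, not assume it.

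The mechanism you guess is also not automatic. In a covering-type selection (e.g.\ Lemma \ref{Lm more}\eqref{L-B1}), a discarded ball \(G\) is absorbed by a retained \(G'\) of comparable or larger measure, so \(r(G')\le r(G)\) need not hold. This is harmless only when \(G'\in\mathcal{F}(B)\) too, because then your Step 2 computation gives \(r(G')\le r(B)-2\). If instead the absorbing ball is a stopping ball of another parent (same generation, different lineage), its scale is not tied to \(r(B)\). A point \(x\in G\subset (G')^\P\) with \(r(G')\ge r(B)\) then stays in the set on the right of \eqref{3.5}, yet lies in \(\bigcup_{G\in\mathcal{F}(B)}G\), where Lemma \ref{Lm 3.8}\eqref{Lm 3.8-a3} says nothing. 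Passing to ancestor chains does not help, since the ancestors of \(G'\) are not those of \(B\). Likewise, absorption only into \((G')^{[k]}\) with \(k\ge 2\) is not captured by the single hulls in \eqref{3.5}. What is missing is a selection in which, for every \(B\in\mathcal{S}'\), the balls of \(\mathcal{F}(B)\) are covered a.e.\ by hulls of retained balls of strictly smaller \(r\) (for instance, retained children of \(B\) itself). You would also have to show that such a selection is still \(\frac12\)-sparse, and that combination is where the real work lies.
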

		\begin{lemma}\label{Lm 3.11}
			For all \(x \in B_k\) and for \(j = 0, 1, ..., k - 1\), we have
			\begin{align}\label{3.6}
				\|\mathscr{T}(f \, \mathbf{1}_{B_j^{[3]}})(x) - \mathscr{T} (f \, \mathbf{1}_{B_{j + 1}^{[3]}}) (x)\|_\mathbb{B} \lesssim \mathscr{C} (T) \|f\|_{\Phi, \phi, B_j^{[3]}}.
			\end{align}
		\end{lemma}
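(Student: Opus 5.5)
The plan is to telescope through the intermediate ball \(\widetilde{B}_{j+1}\) supplied by Lemma \ref{Lm 3.9}\eqref{Lm 3.9-2}. Fix \(j\in\{0,\dots,k-1\}\) and \(x\in B_k\), and write \(F:=f\,\mathbf{1}_{B_j^{[3]}}\). Since \(\widetilde{B}_{j+1}\subset B_j^\P\), monotonicity of the hull in \eqref{list:BB4} gives \(\widetilde{B}_{j+1}^\P\subset B_j^{[2]}\subset B_j^{[3]}\). Hence \(F\,\mathbf{1}_{\widetilde{B}_{j+1}^\P}=f\,\mathbf{1}_{\widetilde{B}_{j+1}^\P}\), and linearity of \(\mathscr{T}\) splits the difference in \eqref{3.6} into
\begin{align*}
I(x)&:=\bigl\|\mathscr{T}F(x)-\mathscr{T}(F\,\mathbf{1}_{\widetilde{B}_{j+1}^\P})(x)\bigr\|_{\mathbb{B}},\\
II(x)&:=\bigl\|\mathscr{T}(f\,\mathbf{1}_{\widetilde{B}_{j+1}^\P})(x)-\mathscr{T}(f\,\mathbf{1}_{B_{j+1}^{[3]}})(x)\bigr\|_{\mathbb{B}}.
\end{align*}

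The term \(II\) is controlled directly by the third estimate of Lemma \ref{Lm 3.9}\eqref{Lm 3.9-2}, which holds for all \(x\in B_0\). That estimate gives \(II(x)\lesssim\mathscr{C}(T)\Psi^{-1}(\lambda\phi(\mu(B_j)))\|f\|_{\Phi,\phi,B_j^{[3]}}\). To drop the factor \(\Psi^{-1}(\lambda\phi(\mu(B_j)))\), I use the \(\mathscr{Y}\otimes\mathscr{G}\)-condition. From \(\Phi(t)\le\Psi(C(\Phi,\Psi)t)\) we get \(\Psi^{-1}(s)\le C(\Phi,\Psi)\Phi^{-1}(s)\). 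Since \(\lambda\) is a fixed constant, \((\Phi,\phi)\in E_{mb}\) (after adjusting the admissible \(\lambda\) by the constant \(C(\psi,\phi)\) if necessary) then yields \(\Psi^{-1}(\lambda\phi(\mu(B_j)))\lesssim 1\) uniformly in the ball. The same reduction will be used for \(I\).

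For \(I\), I use condition \eqref{BOO-2} applied to the ball \(\widetilde{B}_{j+1}\) and the function \(F\), comparing \(x\) with the point \(\xi_j\in\widetilde{B}_{j+1}\) from Lemma \ref{Lm 3.9}. This needs \(x\in\widetilde{B}_{j+1}\). It follows from \(B_k\subset B_{j+1}^{[2]}\subset\widetilde{B}_{j+1}\), which I would justify from the chain structure: a stopping ball \(B_{l+1}\in\mathcal{F}(B_l)\) meets \(B_l^\P\) and has small measure, so Proposition \ref{P-ball-basis} places it inside the appropriate hull; alternatively I would read it off from how the chain is built. Then
\begin{align*}
I(x)\le \bigl\|\mathscr{T}F(\xi_j)-\mathscr{T}(F\,\mathbf{1}_{\widetilde{B}_{j+1}^\P})(\xi_j)\bigr\|_{\mathbb{B}}+\mathscr{C}_2(T)\,\langle\|F\|\rangle_{\Phi,\phi,\widetilde{B}_{j+1}}.
\end{align*}
Because \(\xi_j\in\widetilde{B}_{j+1}\), the first term is at most \(T^*F(\xi_j)\le\Upsilon(f\,\mathbf{1}_{B_j^{[3]}})(\xi_j)\). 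The second estimate of Lemma \ref{Lm 3.9}\eqref{Lm 3.9-2} bounds this by \(\mathscr{C}(T)\Psi^{-1}(\lambda\phi(\mu(B_j)))\|f\|_{\Phi,\phi,B_j^{[3]}}\lesssim \mathscr{C}(T)\|f\|_{\Phi,\phi,B_j^{[3]}}\).

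The main obstacle is the last term, i.e.\ proving
\[
\langle\|f\,\mathbf{1}_{B_j^{[3]}}\|\rangle_{\Phi,\phi,\widetilde{B}_{j+1}}\lesssim\|f\|_{\Phi,\phi,B_j^{[3]}},
\]
uniformly over balls \(B'\supset\widetilde{B}_{j+1}\). I would split into two cases. If \(\mu(B')\ge\mu(B_j^{[3]})\), Lemma \ref{lemma:AB}\eqref{AB-1} with \(\omega\equiv1\) compares \(\|f\,\mathbf{1}_{B_j^{[3]}}\|_{\Phi,\phi,B'}\) to \(\|f\|_{\Phi,\phi,B_j^{[3]}}\); note \(B'\cap B_j^{[3]}\ne\varnothing\), and the increase of \(t\phi(t)\) handles the normalisation. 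If \(\mu(B')<\mu(B_j^{[3]})\), then \(B'\) is trapped between \(\widetilde{B}_{j+1}\supset B_{j+1}^{[2]}\) and \(B_j^{[4]}\), so its measure is comparable to \(\mu(B_j)\) up to powers of \(\mathscr{C}_0\). This is essentially the situation of Lemma \ref{Lm 3.3}\eqref{Lm 3.3-b}, or of the second inequality in Lemma \ref{lemma:AB}\eqref{AB-2}, and gives a bound \(\lesssim\frac{\mu(B_j^{[3]})\phi(\mu(B_j^{[3]}))}{\mu(B')\phi(\mu(B'))}\|f\|_{\Phi,\phi,B_j^{[3]}}\). I expect the delicate point to be keeping this ratio bounded: a lower bound on \(\mu(\widetilde{B}_{j+1})\) relative to \(\mu(B_j)\) is not immediate. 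I would try first to extract it from the construction of \(\widetilde{G}\) in Lemma \ref{Lm 3.6}\eqref{Lm 3.6-4}, where \(\widetilde{G}\not\subset F\) forces \(\widetilde{G}\) to be large. Failing that, I would absorb the ratio using \((\Phi,\phi)\in E_{mb}\) and the doubling-type bound \(\phi(\mathscr{C}_0^m t)\lesssim\phi(t)\) for \(\phi\in\mathscr{G}\). Summing the bounds for \(I\) and \(II\) then gives \eqref{3.6}.
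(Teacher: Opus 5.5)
There is a genuine gap, and it sits exactly at the step you flag yourself. The rest of the proposal is sound:
\begin{itemize}
\item the telescoping through \(\widetilde{B}_{j+1}\);
\item the treatment of \(II\) by the third estimate of Lemma \ref{Lm 3.9}\eqref{Lm 3.9-2};
\item the use of \eqref{BOO-2} between \(x\) and \(\xi_j\);
\item the bound of the first part of \(I\) by \(T^*F(\xi_j)\le\Upsilon F(\xi_j)\).
\end{itemize}
The inclusion \(x\in\widetilde{B}_{j+1}\) is easiest to read off directly. Iterating \(B_{l+1}^{\P}\subset B_{l+1}^{[2]}\subset\widetilde{B}_{l+1}\subset B_l^{\P}\) gives \(B_k\subset B_{j+1}^{\P}\subset\widetilde{B}_{j+1}\).

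The gap is the bound \(\langle\|F\|\rangle_{\Phi,\phi,\widetilde{B}_{j+1}}\lesssim\|f\|_{\Phi,\phi,B_j^{[3]}}\), and neither of your routes can close it.
\begin{itemize}
\item There is no lower bound \(\mu(\widetilde{B}_{j+1})\gtrsim\mu(B_j)\). Stopping balls in \(\mathcal{F}(B_j)\), and the enlargements \(\widetilde{G}\) of Lemma \ref{Lm 3.6}, may be arbitrarily small compared with the parent. The condition \(\widetilde{G}\not\subset F\) only says that \(\widetilde{G}\) leaves a set of small measure.
\item \(E_{mb}\) and the doubling of \(\phi\) cannot absorb the ratio \(\mu(B_j^{[3]})\phi(\mu(B_j^{[3]}))/(\mu(B')\phi(\mu(B')))\). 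They control the \(\phi\)-factors, but \(\mu(B_j^{[3]})/\mu(B')\) is unbounded.
\item More fundamentally, no measure comparison can work. The inequality \(\|f\,\mathbf{1}_{B_j^{[3]}}\|_{\Phi,\phi,B'}\lesssim\|f\|_{\Phi,\phi,B_j^{[3]}}\) is simply false for small \(B'\) when \(f\) concentrates on \(B'\). Stopping-time information is indispensable.
\end{itemize}

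The missing idea is that \(\Upsilon\) contains the term \(\mathscr{C}(T)\,\mathcal{M}_{\mathfrak{B},\Phi,\phi}\) precisely for this purpose. Every ball \(B'\supset\widetilde{B}_{j+1}\) contains \(\xi_j\). Hence, with \(F=f\,\mathbf{1}_{B_j^{[3]}}\) and the second estimate of Lemma \ref{Lm 3.9}\eqref{Lm 3.9-2},
\begin{align*}
\langle\|F\|\rangle_{\Phi,\phi,\widetilde{B}_{j+1}}
&=\sup_{B'\supset\widetilde{B}_{j+1}}\|F\|_{\Phi,\phi,B'}
\le\mathcal{M}_{\mathfrak{B},\Phi,\phi}F(\xi_j)\\
&\le\mathscr{C}(T)^{-1}\,\Upsilon F(\xi_j)
\lesssim\Psi^{-1}(\lambda\phi(\mu(B_j)))\,\|f\|_{\Phi,\phi,B_j^{[3]}}.
\end{align*}
Then \(E_{mb}\) removes the \(\Psi^{-1}\) factor, just as in your treatment of \(II\). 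Replacing your case analysis by this observation makes the argument complete. The paper does not reprove this lemma and defers to \cite{SZ2026}, but this is the standard step in the ball-basis sparse-domination scheme.
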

		
		Notice that \([T, b] g(x) = b(x) Tg(x) - T(bg)(x)\). Taking \(c = b_{B_j^{[3]}}\), we obtain
		\begin{align}\label{3.7}
			[T, b] g(x) = (b(x) - c) Tg(x) + T((c - b) g)(x).
		\end{align}
		
		\textbf{On the one hand}, set
		\begin{align}\label{3.8}
			g = f \mathbf{1}_{B_j^{[3]}} - f \mathbf{1}_{B_{j + 1}^{[3]}},
		\end{align}
		Then, together with the \((\Phi, \phi) \in E_{mb}\) condition, we obtain
		\begin{align}\label{3.9}
			|T g(x)| \leq |T (f \mathbf{1}_{B_j^{[3]}}) (x)| \lesssim \Upsilon(f \, \mathbf{1}_{B_j^{[3]}})(x) \lesssim \mathscr{C}(T) \|f\|_{\Phi, \phi, B_j^{[3]}}
		\end{align}
		and
		\begin{align}\label{3.10}
			|T((c - b) g)(x)| \leq |T ((c - b) f \mathbf{1}_{B_j^{[3]}}) (x)| \lesssim \Upsilon((c - b) f \, \mathbf{1}_{B_j^{[3]}})(x) \lesssim \mathscr{C}(T) \|(c - b) f\|_{\Phi, \phi, B_j^{[3]}}.
		\end{align}
		From \eqref{3.7} through \eqref{3.10}, we deduce
		\begin{align}\label{3.11}
			\left\|[\mathscr{T}, b] \left(f \, \mathbf{1}_{B_j^{[3]}} \right)(x) - [\mathscr{T}, b] \left(f \, \mathbf{1}_{B_{j + 1}^{[3]}} \right) (x) \right\|_\mathbb{B} \lesssim \mathscr{C} (T) \left( |b - c| \|f\|_{\Phi, \phi, B_j^{[3]}} + \|(c - b) f\|_{\Phi, \phi, B_j^{[3]}} \right).
		\end{align}
		
		\textbf{On the other hand}, from \eqref{3.5}, \eqref{3.7} and the \((\Phi, \phi) \in E_{mb}\) condition, we obtain
		\begin{align}\label{3.12}
			\|[\mathscr{T}, b](f & \mathbf{1}_{B^{[3]}} ) (x) \|_\mathbb{B} \leq (b(x) - c) \| T (f \mathbf{1}_{B^{[3]}} )(x) \|_\mathbb{B} + \|T ((c - b) (f \mathbf{1}_{B^{[3]}} ) )(x) \|_\mathbb{B} \nonumber \\
			& \lesssim \mathscr{C} (T) ( |b - c| \|f\|_{\Phi, \phi, B^{[3]}} + \|(c - b) f\|_{\Phi, \phi, B^{[3]}} ).
		\end{align}
		
		Consequently, from \eqref{3.11} and \eqref{3.12} we deduce
		\begin{align*}
			|[T, b] & f(x)| = \left\|[\mathscr{T}, b] \left(f \mathbf{1}_{B_0^{[3]}} \right)(x) \right\|_\mathbb{B} \\
			& \leq \sum_{j = 0}^{k - 1} \left\|[\mathscr{T}, b] \left(f \mathbf{1}_{B_j^{[3]}} \right)(x) - [\mathscr{T}, b] \left(f \mathbf{1}_{B_{j+1}^{[3]}} \right)(x) \right\|_\mathbb{B} + \left\|[\mathscr{T}, b] \left(f \mathbf{1}_{B_k^{[3]}} \right)(x) \right\|_\mathbb{B} \\
			& \lesssim \mathscr{C} (T) \sum_{j = 0}^k \left( |b - b_{B_j^{[3]}}| \|f\|_{\Phi, \phi, B_j^{[3]}} + \|(b_{B_j^{[3]}} - b) f\|_{\Phi, \phi, B_j^{[3]}} \right) \\
			& \lesssim \mathscr{C} (T) \sum_{B \in \mathcal{S}_1 \cup \mathcal{S}_2} \left( |b - b_{B^{[3]}}| \|f\|_{\Phi, \phi, B^{[3]}} + \|(b_{B^{[3]}} - b) f\|_{\Phi, \phi, B^{[3]}} \right) \cdot \mathbf{1}_{B_0^{[3]}} \\
			& \leq \mathscr{C} (T) \left[ \mathcal{A}^b_{\mathcal{S}_1, \Phi,\phi} f(x) + \mathcal{A}^b_{\mathcal{S}_2, \Phi,\phi} f(x) \right].
		\end{align*}
		
		Thus we obtain \eqref{3.4}, which completes the proof of Theorem \ref{main}.
	\end{proof}

	\subsection{Proof of Theorem \ref{main'}}\label{Section4.4}~
	
	
	In this subsection, our main task is to prove Theorem \ref{main'}. To this end, we first need the following lemma.
	
	\begin{lemma}\label{G-Holder}
		Let \((X, \mathfrak{M}, \mu)\) be a measure space endowed with a ball-basis \(\mathfrak{B}\), let \(\omega \in A^\mathfrak{B}_p\), and fix a ball \(B \in \mathfrak{B}\). Suppose \(\Phi_i \in \mathscr{Y}\) and \(\phi_i \in \mathscr{G}\) for \(i = 1, 2, 3\). Assume there exists a constant \(c > 0\) such that for all \(r, s > 0\) we have
		\begin{align*}
			\Phi_2^{-1} (r \phi_2(s)) \Phi_3^{-1} (r \phi_3(s)) \leq c \Phi_1^{-1} (r \phi_1(s)).
		\end{align*}
		If \(\|f\|_{\Phi_2, \phi_2, \omega, B} < \infty\) and \(\|g\|_{\Phi_3, \phi_3, \omega, B} < \infty\), then \(\|fg\|_{\Phi_1, \phi_1, \omega, B} < \infty\).
	\end{lemma}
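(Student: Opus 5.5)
This is a generalized Hölder inequality for the weighted Luxemburg averages, and I would prove it in the standard O'Neil style via the Young-type inequality
\[
\Phi_1\!\left(\frac{uv}{c}\right) \le r\phi_1(s)\ \text{-weighted combination of}\ \Phi_2(u),\ \Phi_3(v).
\]
Concretely, fix \(s := \omega(B)\). The aim is the quantitative estimate
\[
\|fg\|_{\Phi_1,\phi_1,\omega,B} \le 2c\,\|f\|_{\Phi_2,\phi_2,\omega,B}\,\|g\|_{\Phi_3,\phi_3,\omega,B}.
\]
Finiteness then follows at once.

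\textbf{Normalization.} By homogeneity of the Luxemburg functional, I may assume \(\|f\|_{\Phi_2,\phi_2,\omega,B} = \|g\|_{\Phi_3,\phi_3,\omega,B} = 1\). The case where one of them is \(0\) is trivial, since then \(fg = 0\) \(\omega\)-a.e. on \(B\). By \eqref{remark-2.1}, this gives
\[
\frac{1}{\omega(B)\phi_2(s)}\int_B \Phi_2(|f|)\,\omega\,d\mu \le 1,
\qquad
\frac{1}{\omega(B)\phi_3(s)}\int_B \Phi_3(|g|)\,\omega\,d\mu \le 1.
\]

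\textbf{Pointwise Young-type inequality.} For a point \(x\), define \(r_2 := \Phi_2(|f(x)|)/\phi_2(s)\) and \(r_3 := \Phi_3(|g(x)|)/\phi_3(s)\), and set \(r := \max\{r_2, r_3\}\). Using the right-continuous generalized inverses from the appendix on \(\mathscr{Y}\), we have \(|f(x)| \le \Phi_2^{-1}(r\phi_2(s))\) and \(|g(x)| \le \Phi_3^{-1}(r\phi_3(s))\). The hypothesis then yields
\[
|f(x)g(x)| \le c\,\Phi_1^{-1}(r\phi_1(s)),
\]
hence \(\Phi_1(|fg|/c) \le r\phi_1(s) \le (r_2 + r_3)\phi_1(s)\).

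Integrating against \(\omega\) over \(B\) and dividing by \(\omega(B)\phi_1(s)\) gives
\[
\frac{1}{\omega(B)\phi_1(s)}\int_B \Phi_1\!\left(\frac{|fg|}{c}\right)\omega\,d\mu \le 2.
\]
Convexity of \(\Phi_1\) together with \(\Phi_1(0) = 0\) gives \(\Phi_1(t/2) \le \Phi_1(t)/2\). So replacing \(c\) by \(2c\) brings the left side below \(1\), and therefore \(\|fg\|_{\Phi_1,\phi_1,\omega,B} \le 2c\).

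\textbf{Main obstacle.} The delicate step is the inversion \(|f(x)| \le \Phi_2^{-1}(\Phi_2(|f(x)|))\) when \(\Phi_2\) is not strictly increasing, or takes the value \(\infty\), or vanishes on an interval. I would handle this with the convention \(\Phi^{-1}(u) = \sup\{t : \Phi(t) \le u\}\) recorded in the appendix on \(\mathscr{Y}\), which guarantees \(t \le \Phi^{-1}(\Phi(t))\) and \(\Phi(\Phi^{-1}(u)) \le u\). I would also treat separately the points where \(r = 0\): there \(|f(x)| \le \Phi_2^{-1}(0)\), and the hypothesis still applies by letting \(r \downarrow 0\) and using monotonicity.
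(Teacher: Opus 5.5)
Your proposal is correct and follows the paper's proof essentially step for step. You normalize both norms to \(1\), take \(r\) as the larger of the two normalized Young values at \(x\), apply the hypothesis pointwise to get \(\Phi_1(|fg|/c)\le r\phi_1(\omega(B))\), integrate, and use convexity of \(\Phi_1\) to absorb the factor \(2\), which yields \(\|fg\|_{\Phi_1,\phi_1,\omega,B}\le 2c\,\|f\|_{\Phi_2,\phi_2,\omega,B}\|g\|_{\Phi_3,\phi_3,\omega,B}\).

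Your separate treatment of the zero-norm case and of the points with \(r=0\) fills small gaps the paper leaves implicit. The generalized-inverse issues you flag do not actually arise: every \(\Phi\in\mathscr{Y}\) is a strictly increasing bijection of \([0,\infty)\), and in any case the appendix's \(\inf\{s:\Phi(s)>t\}\) coincides with your \(\sup\{t:\Phi(t)\le u\}\).
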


	\begin{proof}
		We follow the proof strategy of \cite[Theorem 4.1]{OM-Nakai2008}. Without loss of generality we may assume \(\|f\|_{\Phi_2, \phi_2, \omega, B} = \|g\|_{\Phi_3, \phi_3, \omega, B} = 1\). For almost every \(x \in B\), define
		\begin{align*}
			r = \max \left( \frac{\Phi_2 (|f(x)|)}{\phi_2 (\omega(B))}, \frac{\Phi_3 (|g(x)|)}{\phi_3 (\omega(B))} \right) < \infty.
		\end{align*}
		This implies
		\begin{align*}
			\Phi_2 (|f(x)|) & \leq r \phi_2 (\omega(B)) \Longrightarrow |f(x)| \leq \Phi^{-1}_2 (r \phi_2 (\omega(B))), \\
			\Phi_3 (|g(x)|) & \leq r \phi_3 (\omega(B)) \Longrightarrow |g(x)| \leq \Phi^{-1}_3 (r \phi_3 (\omega(B))).
		\end{align*}
		Consequently,
		\begin{align*}
			|f(x)g(x)| \leq \Phi_2^{-1} (r \phi_2(\omega(B))) \Phi_3^{-1} (r \phi_3 (\omega(B))) \leq c \, \Phi_1^{-1}(r \phi_1 (\omega(B))),
		\end{align*}
		that is,
		\begin{align}\label{3.13}
			\Phi_1 \left(\frac{|f(x)g(x)|}{2c}\right) \leq \frac{1}{2} \Phi_1 \left(\frac{|f(x)g(x)|}{c}\right) \leq \frac{1}{2} \left(\frac{\Phi_2 (|f(x)|)}{\phi_2 (\omega(B))} + \frac{\Phi_3 (|g(x)|)}{\phi_3 (\omega(B))}\right) \phi_1 (\omega(B)).
		\end{align}
		On the other hand, we have
		\begin{equation}
			\begin{aligned}
				\int_B \Phi_2 (|f(x)|) \, \omega \, d \mu \leq \omega(B) \phi_2 (\omega(B)), \\
				\int_B \Phi_3 (|f(x)|) \, \omega \, d \mu \leq \omega(B) \phi_3 (\omega(B)).
			\end{aligned}
			\label{3.14}
		\end{equation}
		Integrating \eqref{3.13} over \(B\) and using \eqref{3.14} we obtain
		\begin{align*}
			\int_B \Phi_1 \left(\frac{|f(x)g(x)|}{2c}\right) \, \omega \, d\mu \leq \omega(B) \phi_1 (\omega(B)),
		\end{align*}
		which yields
		\begin{align*}
			\|fg\|_{\Phi_1, \phi_1, \omega, B} \leq 2c.
		\end{align*}
		Therefore,
		\begin{align}\label{3.15}
			\|fg\|_{\Phi_1, \phi_1, \omega, B} \leq 2c\|f\|_{\Phi_2, \phi_2, \omega, B} \|g\|_{\Phi_3, \phi_3, \omega, B}.
		\end{align}
	\end{proof}

	We now turn to the proof of Theorem \ref{main'}.
	
	\begin{proof}[Proof of Theorem \ref{main'}]
		We observe that for any \(x \in B\),
		\begin{align}\label{3.16}
			\mathcal{A}_{\mathcal{S}, \Phi, \phi} f(x) = \sum_{B \in \mathcal{S}} \|f\|_{\Phi,\phi,B} \cdot \mathbf{1}_B(x) \leq \sup_{B' \in \mathfrak{B}}\|f\|_{\Phi,\phi,B'} \sum_{B \in \mathfrak{B}} \mathbf{1}_B(x) \leq \mathcal{M}_{\mathfrak{B}, \Phi, \phi} f(x) \cdot \sum_{B \in \mathfrak{B}} \mathbf{1}_B(x),
		\end{align}
		and
		\begin{equation}
		\begin{aligned}
			\mathcal{A}^b_{\mathcal{S}, \Phi, \phi} & f(x) = \mathcal{A}^{b, \{1\}, \varnothing}_{\mathcal{S}, \Phi, \phi} f(x) + \mathcal{A}^{b, \varnothing, \{1\}}_{\mathcal{S}, \Phi, \phi} f(x) \\
			& = \sum_{B \in \mathcal{S}} |b(x) - b_B| \, \|f\|_{\Phi, \phi, B} \cdot \mathbf{1}_B(x) + \sum_{B \in \mathcal{S}} \|(b(x) - b_B) f \|_{\Phi, \phi, B} \cdot \mathbf{1}_B(x) \\
			& \leq \sup_{B' \in \mathfrak{B}} |b(x) - b_{B'}| \, \sum_{B \in \mathfrak{B}} \mathbf{1}_B (x) \cdot \mathcal{M}_{\mathfrak{B}, \Phi, \phi} f(x) + \sum_{B \in \mathfrak{B}} \mathbf{1}_B(x) \cdot \mathcal{M}_{\mathfrak{B}, \Phi, \phi} ((b - b_B) f(x)).
		\end{aligned}\label{3.17}
		\end{equation}
		
		Now take a ball \(B \in \mathfrak{B}\) from the family \(\mathfrak{B}\) that satisfies the Besicovitch \(\mathfrak{N}\)-condition. Then from the estimate \eqref{3.16} we obtain
		\begin{align}\label{3.18}
			\|\mathcal{A}_{\mathcal{S}, \Phi, \phi} f\|_{\Psi, \psi, \omega, B} \leq \mathfrak{N} \, \|\mathcal{M}_{\mathfrak{B}, \Phi, \phi} f\|_{\Psi, \psi, \omega, B} \lesssim \mathscr{K} \, \mathfrak{N} \, \|f\|_{\Psi, \psi, \omega, B}.
		\end{align}
		Moreover, combining \eqref{3.17} with Lemma \ref{G-Holder} yields
		\begin{align}\label{3.19}
			\left\|\mathcal{A}^b_{\mathcal{S}, \Phi, \phi} f\right\|_{\Psi, \psi, \omega, B} \leq 2 \, \mathfrak{N} & \, \left\|\sup_{B' \in \mathfrak{B}} |b(x) - b_{B'}|\right\|_{1, \psi, \omega, B} \|1\|_{L^{\infty}} \|\mathcal{M}_{\mathfrak{B}, \Phi, \phi} f\|_{\Psi, \psi, \omega, B} \nonumber \\
			& \hspace{15em} + \mathscr{K} \, \mathfrak{N} \, \|(b - b_B) f \|_{\Psi, \psi, \omega, B}  \nonumber \\
			\lesssim \mathscr{K} \, \mathfrak{N} \, \|b\|_{\mathfrak{L}^1_\psi(\omega)} & \, \|f\|_{\Psi, \psi, \omega, B} + \mathscr{K} \, \mathfrak{N} \, \|b - b_B\|_{1, \psi, \omega, B} \, \|1\|_{L^{\infty}} \, \|f\|_{\Psi, \psi, \omega, B}   \nonumber \\
			\lesssim \mathscr{K} \, \mathfrak{N} \, \|b\|_{\mathfrak{L}^1_\psi(\omega)} & \, \|f\|_{\Psi, \psi, \omega, B}.
		\end{align}
		
		Finally, substituting \eqref{3.18} and \eqref{3.19} into \eqref{ine main1} and \eqref{ine main2} respectively, we obtain \eqref{ine main'1} and \eqref{ine main'2}. This completes the proof of Theorem \ref{main'}.
	\end{proof}

	\section{Dual spaces of abstract Orlicz-Morrey spaces endowed with a ball-basis}\label{Section5}
	
	
	In this section, we aim to discuss the dual space of abstract Orlicz-Morrey spaces endowed with a ball-basis.

	\subsection{Block spaces and their properties}\label{Section5.1}~
	
	Let \((X, \mathfrak{M}, \mu)\) be a measure space. For a Young function \(\Phi\), we define \(\mathscr{M}^{\Phi}(X)\) by
	\begin{align*}
		\mathscr{M}^{\Phi}(X)= \left\{f \in L_{{\rm loc}}^1 (X): \int_X \Phi(k |f(x)|) d\mu(x) < + \infty \text{ for all } k > 0 \right\}.
	\end{align*}
	Clearly, \(\mathscr{M}^{\Phi}(X)\) is a closed subspace of the abstract Orlicz space endowed with a ball-basis (see \cite{Zhou2025}). As in \cite[Page 110, Theorem 7]{Rao1991}, for \(\Phi \in \mathcal{Y}\) we have \((\mathscr{M}^{\Phi}(X))^* = L^{\widetilde{\Phi}}(X)\). Further properties of \(\mathscr{M}^{\Phi}(X)\) can be found in \cite{OM-Nakai-2011, Rao1991}. Before introducing the abstract space \(\mathscr{B}_{(\Phi, \phi)}(X)\) endowed with a ball-basis, we first introduce the notion of a \(\mathfrak{b}(\Phi, \phi)\)-pair.

	\begin{definition}
		Let \((X, \mathfrak{M}, \mu)\) be a measure space endowed with a ball-basis \(\mathfrak{B}\), and let \(\Phi \in \mathscr{Y}\) together with \(\phi \in \mathscr{G}\). A measurable function \(b\) on \(X\) is called a {\tt \((\Phi,\phi)\)-block} if there exists a ball \(B \in \mathfrak{B}\) satisfying the following two conditions:
		\begin{list}{\rm (\theenumi)}{\usecounter{enumi}\leftmargin=1.2cm \labelwidth=1cm \itemsep=0.2cm \topsep=.2cm \renewcommand{\theenumi}{\roman{enumi}}}
			\item\label{5.1-i} \(\supp b \subset B\)
			
			\item\label{5.1-ii} \(b \in \mathscr{M}^{\Phi}(X) \text{ and } \|b\|_{\Phi, \phi, B} \leq \dfrac{1}{\mu(B) \phi(\mu(B))}\).
		\end{list}
		We denote by \(\mathfrak{b}(\Phi, \phi)\) the collection of all pairs \((b, B)\) where \(b\) is a \((\Phi,\phi)\)-block and \(B\) is a ball satisfying \eqref{5.1-i} and \eqref{5.1-ii}.
	\end{definition}
	
	We now present the definition of the abstract space \(\mathscr{B}_{(\Phi, \phi)}(X)\) endowed with a ball-basis.
	
	\begin{definition}
		Let \((X, \mathfrak{M}, \mu)\) be a measure space endowed with a ball-basis \(\mathfrak{B}\), and let \(\Phi \in \mathscr{Y}\) together with \(\phi \in \mathscr{G}\). Given a sequence of \((\Phi,\phi)\)-blocks \(\{b_j\}\) and a sequence of positive numbers \(\{\lambda_j\}\), the abstract space \(\mathscr{B}_{(\Phi, \phi)}(X)\) endowed with a ball-basis is defined by
		\begin{align}\label{B-space}
			\mathscr{B}_{(\Phi, \phi)}(X) = \left\{f : f = \sum_j \lambda_j b_j, \text{ and }\sum_j \lambda_j < \infty \right\}.
		\end{align}
		and its norm is given by
		\begin{align*}
			\|f\|_{\mathscr{B}_{(\Phi, \phi)}(X)} = \inf \left\{ \sum_j \lambda_j : f = \sum_j \lambda_j b_j \in \mathscr{B}_{(\Phi, \phi)}(X) \right\}.
		\end{align*}
	\end{definition}

	We should note that the decomposition \(f = \sum_j \lambda_j b_j\) is not unique. One can readily verify that \(\mathscr{B}_{(\Phi, \phi)}(X)\) is a Banach space and that \(\|f\|_{\mathscr{B}_{(\Phi, \phi)}(X)}\) indeed defines a norm.
	
	Taking \(\omega(x) \equiv 1\) in \eqref{2.2} and setting \(f = 1\), we obtain
	\begin{align}\label{5.2}
		\|1\|_{\Phi, \phi, B} = \frac{1}{\Phi^{-1}(\phi(\mu(B)))}.
	\end{align}
	
	The next lemma reveals the relationship between the space \(\mathscr{B}_{(\Phi, \phi)}(X\) and the spaces \(L^1(X)\) and \(\mathscr{M}^{\Phi}(X)\).

	\begin{lemma}\label{Lm 5.3}
		Let \((X, \mathfrak{M}, \mu)\) be a measure space endowed with a ball-basis \(\mathfrak{B}\). If \((b, B) \in \mathfrak{b}(\Phi, \phi)\), then the following statements hold.
		
		\begin{list}{\rm (\theenumi)}{\usecounter{enumi}\leftmargin=1.2cm \labelwidth=1cm \itemsep=0.2cm \topsep=.2cm \renewcommand{\theenumi}{\arabic{enumi}}}
			\item\label{5.3-1} \(b \in L^1(B)\).
			
			\item\label{5.3-2} There exist constants \(C_1, C_2 > 0\) such that
			\begin{align*}
				\left\{
				\begin{array}{cl}
					\|b\|_{L^1 (B)} \leq C_1, & \text{if } \mu(B) \leq 1, \\
					\|b\|_{L^\Phi (B)} \leq C_2, & \text{if } \mu(B) \geq 1.
				\end{array}
				\right.
			\end{align*}
		\end{list}
	\end{lemma}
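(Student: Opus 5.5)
The plan is to work directly from the definition of the generalized Luxemburg norm \eqref{G-L-norm} together with the block normalization \eqref{5.1-ii}. Write $\lambda_B := \|b\|_{\Phi,\phi,B} \le \bigl(\mu(B)\phi(\mu(B))\bigr)^{-1}$. By the analogue of \eqref{remark-2.1} with $\omega\equiv 1$ we have
\begin{align*}
\int_B \Phi\left(\frac{|b|}{\lambda_B}\right) d\mu \le \mu(B)\,\phi(\mu(B)).
\end{align*}
Since $\lambda_B \le (\mu(B)\phi(\mu(B)))^{-1}$ and $\Phi$ is increasing, this gives
\begin{align*}
\int_B \Phi\bigl(\mu(B)\phi(\mu(B))\,|b|\bigr)\,d\mu \le \mu(B)\phi(\mu(B)).
\end{align*}
All later estimates are extracted from this single modular inequality.

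For \eqref{5.3-1}, I will use that a Young function dominates a linear function at infinity. Choose $t_0>0$ with $\Phi(t_0)>0$; convexity and $\Phi(0)=0$ give $t \le t_0 + t_0\Phi(t)/\Phi(t_0)$ for all $t\ge0$. Apply this with $t=\mu(B)\phi(\mu(B))|b|$ and integrate over $B$. Using \eqref{list:BB1}, which gives $\mu(B)<\infty$, the result is
\begin{align*}
\mu(B)\phi(\mu(B))\,\|b\|_{L^1(B)} \le t_0\mu(B) + \frac{t_0}{\Phi(t_0)}\,\mu(B)\phi(\mu(B)),
\end{align*}
that is,
\begin{align*}
\|b\|_{L^1(B)} \le \frac{t_0}{\phi(\mu(B))} + \frac{t_0}{\Phi(t_0)}.
\end{align*}
This already proves \eqref{5.3-1}, since $\phi>0$.

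For the first case of \eqref{5.3-2}, with $\mu(B)\le 1$, I need $\phi(\mu(B))$ to be bounded below uniformly. Here I would invoke the monotonicity properties of $\mathscr{G}$ from Appendix \ref{Appendix3}: $\phi$ is almost decreasing, so $\phi(\mu(B)) \gtrsim \phi(1)$. This gives a constant $C_1$ depending only on $\Phi,\phi$.

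For the second case, with $\mu(B)\ge1$, I will instead use that $t\phi(t)$ is almost increasing, so $\mu(B)\phi(\mu(B)) \gtrsim \phi(1)=:c_0$. Then the modular inequality, combined with convexity to pull out the factor, gives $\int_B \Phi(c\,|b|)\,d\mu \lesssim 1$ for a suitable fixed $c$, after absorbing $\mu(B)\phi(\mu(B))\ge c_0$. Since $\phi$ is almost decreasing, $\mu(B)\phi(\mu(B))$ need not be bounded, and the right-hand side must be controlled by rescaling. The argument runs as follows. Set $s=\mu(B)\phi(\mu(B))\ge c_0$. Then $\Phi(s|b|)\ge s\,\Phi(|b|)$ when $s\ge1$, by convexity, so
\begin{align*}
\int_B \Phi(|b|)\,d\mu \le \frac{1}{s}\int_B\Phi(s|b|)\,d\mu \le 1.
\end{align*}
If $c_0<1$, I first replace $|b|$ by $c_0|b|$ and use the same convexity estimate. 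Either way the Luxemburg norm satisfies $\|b\|_{L^\Phi(B)}\le C_2:=\max\{1,c_0^{-1}\}$.

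The main obstacle is pinning down exactly which monotonicity the class $\mathscr{G}$ (\eqref{G function}) guarantees, namely $\phi$ almost decreasing and $t\phi(t)$ almost increasing. Both case bounds rest on these. If only one of them is available, I would fall back on the first estimate $\|b\|_{L^1}\le t_0/\phi(\mu(B))+t_0/\Phi(t_0)$ and track constants accordingly.
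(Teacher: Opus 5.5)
Your proof is correct, and for part (1) and the case $\mu(B)\le 1$ it takes a different route from the paper.

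The paper applies the Orlicz--Morrey H\"older inequality (Lemma 2.4 of the authors' earlier work) to the pair $b$, $1$ with the complementary function $\widetilde{\Phi}$. This gives $\|b\|_{L^1(B)}\le 2/\widetilde{\Phi}^{-1}(\phi(\mu(B)))$. It then handles small balls by asserting, without proof, that $r\mapsto\widetilde{\Phi}^{-1}(\phi(r))$ is almost decreasing.

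You instead derive everything from the single modular inequality $\int_B\Phi(\mu(B)\phi(\mu(B))|b|)\,d\mu\le\mu(B)\phi(\mu(B))$ and the linear minorant $t\le t_0+t_0\Phi(t)/\Phi(t_0)$, which is Young's inequality at one fixed dual point. What your route buys:
\begin{itemize}
\item it avoids $\widetilde{\Phi}$, the external H\"older lemma, and the monotonicity of $\widetilde{\Phi}^{-1}\circ\phi$;
\item it uses only that $\phi$ is almost decreasing;
\item it gives explicit constants depending on $\Phi$ and $\phi$ alone.
\end{itemize}
With $t_0$ fixed, your bound is cruder in its dependence on $\mu(B)$. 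Taking $t_0=\Phi^{-1}(\phi(\mu(B)))$, however, yields $2\Phi^{-1}(\phi(\mu(B)))/\phi(\mu(B))$. This agrees with the paper's bound up to a factor $2$, because $u\le\Phi^{-1}(u)\widetilde{\Phi}^{-1}(u)\le 2u$. The paper's form has the merit of exhibiting the dual quantity $\|1\|_{\widetilde{\Phi},\phi,B}$. It also reuses the same H\"older inequality that drives the duality argument in Theorem \ref{main2}.

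For $\mu(B)\ge 1$ your argument is essentially the paper's. Both use that $t\phi(t)$ is almost increasing to bound $s=\mu(B)\phi(\mu(B))$ from below, and then pull $s$ out of $\Phi$ by convexity. Strictly, the lower bound is $s\ge\phi(1)/C_{ic}$ rather than $s\ge\phi(1)$, which only changes $C_2$. The paper does this in one step by choosing $C_2$ with $C_2 s\ge 1$, whereas you split into the cases $s\ge1$ and $s<1$.
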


	\begin{proof}
		We first verify part \eqref{5.3-1}. Applying H\"older's inequality from \cite[Lemma 2.4]{SZ2026}, we obtain
		\begin{align*}
			\|b\|_{L^1(B)} = \int_B |b| d\mu \leq 2 \mu(B) \phi(\mu(B)) \|b\|_{\Phi, \phi, B} \|1\|_{\widetilde{\Phi}, \phi, B} \leq \frac{2}{\widetilde{\Phi}^{-1}(\phi(\mu(B)))}.
		\end{align*}
		
		Next we prove part \eqref{5.3-2}. If \(\mu(B) \le 1\), note that the function \(\widetilde{\Phi}^{-1}(\phi(r))\) is almost decreasing. Using part \eqref{5.3-1}, we get
		\[
		\|b\|_{L^1(B)} \leq \frac{2}{\widetilde{\Phi}^{-1}(\phi(\mu(B)))} \leq C_1.
		\]
		If \(\mu(B) \ge 1\), we choose \(C_2 \ge 1\) such that \(C_2 \mu(B) \phi(\mu(B)) \ge 1\). Then, since \((b, B) \in \mathfrak{b}(\Phi, \phi)\), we have
		\begin{align*}
			\int_B \Phi \left(\frac{|b(x)|}{C_2}\right) & d\mu(x) \leq \int_B \Phi \left(\frac{|b(x)|}{C_2 \mu(B) \phi(\mu(B)) \|b\|_{\Phi, \phi, B}}\right) d\mu(x) \\
			& \leq \frac{1}{C_2 \mu(B) \phi(\mu(B))} \int_B \Phi \left(\frac{|b(x)|}{\|b\|_{\Phi, \phi, B}}\right) d\mu(x) \leq 1,
		\end{align*}
		which implies \(\|b\|_{L^\Phi(B)} \leq C_2\). This completes the proof of the lemma.
	\end{proof}

	\begin{remark}
		Let \((X, \mathfrak{M}, \mu)\) be a measure space endowed with a ball-basis \(\mathfrak{B}\). Lemma \ref{Lm 5.3} reveals the following two facts:
		\begin{itemize}
			\item The series \(f = \sum_j \lambda_j b_j\) converges in \(L^1(X) \cup \mathscr{M}^{\Phi}(X)\);
			
			\item \(\mathscr{B}_{(\Phi, \phi)}(X) \subset L^1(X) \cup \mathscr{M}^{\Phi}(X)\).
		\end{itemize}
	\end{remark}

	\subsection{Proof of Theorem \ref{main2}}\label{Section5.2}~
	

	In this subsection, we aim to prove Theorem \ref{main2}, which states that the dual space of an abstract Orlicz-Morrey space endowed with a ball-basis is exactly the abstract space \(\mathscr{B}_{(\Phi, \phi)}(X)\) equipped with a ball-basis.

	\begin{proof}[Proof of Theorem \ref{main2}]
		
		{\bf On the one hand}, fix a function \(g \in L^{(\widetilde{\Phi}, \phi)} (X)\). Let \(\{\lambda_j\}\) be a sequence of positive numbers such that \(\sum_j \lambda_j < \infty\), and let \(\{b_j\}\) be a sequence of \((\Phi,\phi)\)-blocks. For any decomposition \(f = \sum_j \lambda_j b_j\) of an element \(f \in \mathscr{B}_{(\Phi,\phi)}(X)\), we consider the fixed \(\lambda_j\) and \(b_j\). Then
		\begin{align*}
			\int_X | b_j g | \, d \mu \leq 2 \mu(B) \phi (\mu(B)) \|b_j\|_{\Phi, \phi, B} \|g\|_{\widetilde{\Phi}, \phi, B} \leq 2 \|g\|_{L^{(\widetilde{\Phi}, \phi)}}.
		\end{align*}
		Consequently,
		\begin{align*}
			\int_X | \sum_j \lambda_j b_j g | \, d\mu \leq \sum_j \lambda_j \int_X | b_j g | \, d\mu \lesssim 1.
		\end{align*}
		Since \(f \in \mathscr{B}_{(\Phi, \phi)}(X) \subset L^1(X) \cup \mathscr{M}^{\Phi}(X)\), we have
		\begin{align*}
			\int_X f g \, d\mu = \int_X \left(\sum_j \lambda_j b_j\right) g \, d\mu = \sum_j \lambda_j \int_X b_j g \, d\mu.
		\end{align*}
		Define a continuous linear functional \(L\) on \(\mathscr{B}_{(\Phi,\phi)}(X)\) by \(L(f) = \int_X f g \, d\mu\). Then
		\begin{align*}
			| L(f)| = \left| \sum_j \lambda_j \int_X b_j g \, d\mu \right| \leq \left( \sum_j \lambda_j \right) \int_X |b_j g| \, d\mu \leq 2 \left( \sum_j \lambda_j \right) \|g\|_{L^{(\widetilde{\Phi}, \phi)}}.
		\end{align*}
		This shows that \(L^{(\widetilde{\Phi}, \phi)} (X) \hookrightarrow \left( \mathscr{B}_{(\Phi, \phi)}(X) \right)^*\).

		{\bf On the other hand}, we prove that \(\left( \mathscr{B}_{(\Phi, \phi)}(X) \right)^* \hookrightarrow L^{(\widetilde{\Phi}, \phi)} (X)\). The proof of this part is divided into the following three steps. To avoid confusion, we denote the above space \(\mathscr{B}_{(\Phi, \phi)} (X)\) by \(\mathscr{B}_{(\Phi, \phi)} (X, \mu)\).
		
		{\it Step 1.} Fix a ball \(B \in \mathfrak{B}\) and define
		\begin{align}\label{5.3}
			d \mu' = \frac{d \mu}{\mu(B) \phi (\mu(B))},
		\end{align}
		that is,
		\begin{align*}
			\mathscr{M}^{\Phi}(X, \mu') = \left\{f \in L_{{\rm loc}}^1 (X): \frac{1}{\mu(B) \phi (\mu(B))} \int_X \Phi(k |f(x)|) d\mu(x) < + \infty \text{ for all } k > 0 \right\},
		\end{align*}
		We observe that if \(f \in \mathscr{M}^{\Phi}(X, \mu')\) and \(\operatorname{supp} f \subset B\), then \(f\) can be written as
		\begin{align*}
			f = \|f\|_{\Phi, \phi, B} \mu(B) \phi(\mu(B)) \cdot \frac{f}{\|f\|_{\Phi, \phi, B} \mu(B) \phi(\mu(B))} =: \|f\|_{\Phi, \phi, B} \mu(B) \phi(\mu(B)) b,
		\end{align*}
		One easily verifies that \(b\) is a \((\Phi,\phi)\)-block. Consequently,
		\begin{align*}
			\|f\|_{\mathscr{B}_{(\Phi, \phi)}(X, \mu)} \leq \|f\|_{\Phi, \phi, B} \mu(B) \phi(\mu(B)),
		\end{align*}
		which shows that \(\mathscr{M}^{\Phi}(B, \mu') \subset \mathscr{B}_{(\Phi, \phi)} (X, \mu)\).

		{\it Step 2.} For the fixed ball \(B \in \mathfrak{B}\) as above, we restrict the linear functional \(L\) on \(\mathscr{B}_{(\Phi, \phi)}(X, \mu)\) to the subspace \(\mathscr{M}^{\Phi}(B, \mu')\). Then, for any \(f \in \mathscr{M}^{\Phi}(B, \mu')\), we define a linear functional \(L'\) on \(\mathscr{M}^{\Phi}(B, \mu')\) by
		\begin{align}\label{5.4}
			L' (f) = \frac{L(f)}{\mu(B) \phi(\mu(B))}.
		\end{align}
		It follows that
		\begin{align*}
			|L' (f)| \leq \frac{\|L\|_{\left( \mathscr{B}_{(\Phi, \phi)}(X) \right)^*} \|f\|_{\mathscr{B}_{(\Phi, \phi)}(X)}}{\mu(B) \phi(\mu(B))} \leq \|L\|_{\left( \mathscr{B}_{(\Phi, \phi)}(X) \right)^*} \|f\|_{\Phi, \phi, B}.
		\end{align*}
		This implies
		\begin{align}\label{5.5}
			\|L'\|_{(\mathscr{M}^{\Phi}(B, \mu'))^*} \leq \|L\|_{\left( \mathscr{B}_{(\Phi, \phi)}(X) \right)^*}.
		\end{align}
		As noted earlier, \((\mathscr{M}^{\Phi} (B, \mu'))^* = L^{\widetilde{\Phi}} (B, \mu')\). Hence there exists a function \(\widetilde{g} \in L^{\widetilde{\Phi}}(B, \mu')\) such that for all \(f \in \mathscr{M}^{\Phi}(B, \mu')\),
		\begin{align}\label{5.6}
			L'(f) = \int f \, \widetilde{g} \, d \mu'.
		\end{align}
		and moreover
		\begin{align}\label{5.7}
			\|\widetilde{g}\|_{\widetilde{\Phi}, \phi, B} = \|\widetilde{g}\|_{L^{\widetilde{\Phi}}(B, \mu')} = \sup_{\|f\|_{\mathscr{M}^{\Phi} (B, \mu')} \leq 1} \left| \int f \, \widetilde{g} \, d\mu' \right| = \sup_{\|f\|_{\mathscr{M}^{\Phi} (B, \mu')} \leq 1} |L'(f)| = \|L'\|_{(\mathscr{M}^{\Phi} (B, \mu'))^*}.
		\end{align}

		{\it Step 3.} For the fixed ball \(B \in \mathfrak{B}\) and any function \(f \in \mathscr{M}^{\Phi}(B, \mu')\), combining \eqref{5.3}, \eqref{5.4} and \eqref{5.6} yields
		\begin{align}\label{5.8}
			L(f) = \int f \, \widetilde{g} \, d \mu.
		\end{align}
		From \eqref{5.5} and \eqref{5.7} we obtain
		\begin{align}\label{5.9}
			\|\widetilde{g}\|_{\widetilde{\Phi}, \phi, B} \leq \|L\|_{\left( \mathscr{B}_{(\Phi, \phi)}(X) \right)^*}.
		\end{align}
		For the above \(\widetilde{g} \in L^{\widetilde{\Phi}}(B, \mu')\), we choose a function \(g \in L^{\Phi}_{\rm loc}(X)\) such that \(g \cdot \mathbf{1}_B = \widetilde{g}\). Then together with \eqref{5.9} we get
		\begin{align}\label{5.10}
			\|g\|_{\widetilde{\Phi}, \phi, B} = \|\widetilde{g}\|_{\widetilde{\Phi}, \phi, B} \leq \|L\|_{\left( \mathscr{B}_{(\Phi, \phi)}(X) \right)^*}.
		\end{align}
		Since the ball \(B\) is arbitrary, \eqref{5.10} implies that \(g \in L^{(\widetilde{\Phi}, \phi)} (X)\) and moreover \(\|g\|_{L^{(\widetilde{\Phi}, \phi)}} \leq \|L\|_{\left( \mathscr{B}_{(\Phi, \phi)}(X) \right)^*}\). For any \(f \in \mathscr{B}_{(\Phi, \phi)}(X, \mu)\) and \(g \in L^{(\widetilde{\Phi}, \phi)}(X)\), we have already noted at the beginning of this theorem that \(f g \in L^1 (X, \mu)\), and the argument above shows that \(L(f) = \int_X f g \, d\mu\) satisfies \eqref{5.10}. This proves the embedding \(\left( \mathscr{B}_{(\Phi, \phi)}(X) \right)^* \hookrightarrow L^{(\widetilde{\Phi}, \phi)} (X)\).		
	\end{proof}

	\begin{remark}
		Theorem \ref{main2} shows that for a measure space \((X, \mathfrak{M}, \mu)\) endowed with a ball-basis \(\mathfrak{B}\), with \(\Phi \in \mathcal{Y}\) and \(\phi \in \mathcal{G}\), the following holds: whenever \(f \in L^{(\widetilde{\Phi}, \phi)} (X)\) and \(g \in \mathscr{B}_{(\Phi, \phi)}(X)\), there exists a constant \(C > 0\) such that
		\begin{align*}
			\int_X |fg| d\mu \leq C \|f\|_{L^{(\widetilde{\Phi}, \phi)} (X)} \|g\|_{\mathscr{B}_{(\Phi, \phi)}(X)}.
		\end{align*}
	\end{remark}

	\section{Appendix}\label{Appendix}
						
	\subsection{Muckenhoupt weights}\label{Appendix1}~
	
	
	Let \((X, \mathfrak{M}, \mu)\) be a measure space endowed with a ball-basis \(\mathfrak{B}\). A measurable function \(\omega\) on \(X\) is called a weight if it satisfies \(0 < \omega(x) < \infty\) for \(\mu\)-almost every \(x \in \bigcup_{B \in \mathfrak{B}} B\).


	We now introduce the {\tt Muckenhoupt class} \(A^\mathfrak{B}_p\). For a given exponent \(1 < p < \infty\), let \(p'\) denote its conjugate exponent. We say \(\omega \in A^\mathfrak{B}_p\) whenever
	\begin{align*}
		[\omega]_{A^\mathfrak{B}_p} := \sup_{B \in \mathfrak{B}} \bigg( \fint_{B} \omega \, d \mu \bigg) \bigg( \fint_{B} \omega^{1-p'} \, d \mu \bigg)^{p-1} < \infty,
	\end{align*} 
	For the endpoint \(p = 1\), we write \(\omega \in A^\mathfrak{B}_1\) if
	\begin{align*}
		[\omega]_{A^\mathfrak{B}_1} := \left\| (M_\mathfrak{B} \omega) \, \omega^{-1} \, \mathbf{1}_{\bigcup_{B \in \mathfrak{B}} B} \right\|_{L^{\infty}(X)} < \infty.
	\end{align*}
	Finally, we treat the case \(p = \infty\) by setting
	\begin{align*}
		A^\mathfrak{B}_\infty = \bigcup_{p \geq 1} A^\mathfrak{B}_p, \quad [\omega]_{A^\mathfrak{B}_\infty} := \inf\{ [\omega]_{ A^\mathfrak{B}_p} : \omega \in A^\mathfrak{B}_p\}.
	\end{align*}

	\subsection{\(\mathscr{Y}\)-class functions}\label{Appendix2}~
	
	A function \(\Phi : [0, +\infty] \to [0, +\infty]\) is said to be a {\tt Young function} provided that the following conditions hold:
	\begin{itemize}
		\item \(\Phi\) is convex.
		\item \(\Phi\) is left-continuous.
		\item \(\displaystyle \lim\limits_{t \to 0^+}\Phi(t) = \Phi(0) = 0\) and \(\displaystyle \lim\limits_{t\to +\infty} \Phi(t) = \Phi(+ \infty) = +\infty\).
	\end{itemize}
	
	Let \(\mathscr{Y}\) denote the collection of all Young functions, that is,
	\begin{align}\label{Y function}
		\mathscr{Y}=\left\{\Phi:\,[0,\,+\infty]\to[0,\,+\infty]\,\big| \, 0 < \Phi(t) < +\infty, \, 0 < t < +\infty \right\}.
	\end{align}
	
	\begin{remark}
		If \(\Phi\in\mathscr{Y}\), then \(\Phi\) is absolutely continuous on any closed subinterval of \([0,\,+\infty)\) and \(\Phi:\,[0,\, +\infty) \to [0,\,+\infty)\) is a bijection.
	\end{remark}
	
	For a Young function \(\Phi\) and for \(0 \leq t \leq +\infty\), we define  
	\begin{align*}
		\Phi^{-1}(t)=\inf\{s \geq 0:\,\Phi(s)>t\} \qquad (\inf \varnothing = +\infty).
	\end{align*}
	
	\begin{remark}
		Concerning the inverse function \(\Phi^{-1}\), the following facts are worth noting.
		\begin{itemize}
			\item \(\Phi\in\mathscr{Y}\) implies that \(\Phi^{-1}\) is the inverse of \(\Phi\) in the usual sense.
			
			\item For every \(0 \leq t < +\infty\), the following double inequality holds:
			\[
			\Phi\left(\Phi^{-1}(t)\right) \leq t \leq \Phi^{-1}\left(\Phi(t)\right).
			\]
		\end{itemize}
	\end{remark}

	For a Young function \(\Phi\), one defines its {\tt complementary function} \(\widetilde{\Phi}\) via the formula
	\begin{align*}
		\widetilde{\Phi}(s)= 
		\left\{ 
		\begin{array}{ll}
			\sup\limits_{t \in [0,\,+\infty)}\{st-\Phi(t)\}, ~~ & s \in [0,\,+\infty), \\
			+\infty,  & s=+\infty. \\
		\end{array}
		\right.
	\end{align*}
	It is clear that \(\widetilde{\Phi}\) is also a Young function and satisfies \(\widetilde{\widetilde{\Phi}}=\Phi\).

	\subsection{\(\mathscr{G}\)-class functions}\label{Appendix3}~
	
	Regarding the concepts of almost increasing (abbreviated as \(a.\,ic.\)), almost decreasing (abbreviated as \(a.\,dc.\)), and doubling condition.
	
	\begin{itemize}
		\item A function \(\varphi:\,(0,\,+\infty) \to (0,\,+\infty)\) is called {\tt almost increasing} (resp. {\tt almost decreasing}) if there exists a constant \(C_{ic} > 0\) (resp. \(C_{dc} > 0\)) such that
		\[
		\varphi(r) \leq C_{ic} \varphi(s) \qquad
		\left(\text{resp. } \varphi(r)\geq C_{dc} \varphi(s) \right) \qquad 
		\text{for } \; r \leq s.
		\]
		
		\item A function \(\varphi:\,(0,\,+\infty) \to(0,\,+\infty)\) is said to satisfy the {\tt doubling condition} if there exists a constant \(C>0\) such that
		\[
		C^{-1} \leq \frac{\varphi(r)}{\varphi(s)} \leq C \qquad 
		\text{for } \; \frac{1}{2} \leq \frac{r}{s} \leq 2.
		\]
		
		\item For functions \(\varphi,\,\vartheta: \,(0,\,+\infty)\to(0,\,+\infty)\) we write \(\varphi\sim\vartheta\) if there exists a constant \(C > 0\) such that
		\[
		C^{-1} \vartheta(r) \leq \varphi(r) \leq C \vartheta(r) \qquad 
		\text{for all } \; r > 0.
		\]
	\end{itemize}
	
	Let \(\mathscr{G}\) denote the set of all functions \(\varphi:\, (0,\, +\infty) \to (0,\,+\infty)\); that is,
	\begin{align}\label{G function}
		\mathscr{G} = \left\{ \varphi:\,\varphi(r) \,\, a.\,dc., \, r\varphi(r) \,\, a.\,ic.\right\}.
	\end{align}
	
	\begin{remark}
		For \(\varphi \in \mathscr{G}\), we have the following assertions.
		\begin{itemize}
			\item[\(\blacksquare\)] \(\varphi\in\mathscr{G}\) implies that \(\varphi\) satisfies the doubling condition.
			
			\item[\(\blacksquare\)] If there exists \(\vartheta:\,(0,\,+\infty)\to(0,\,+\infty)\) and a function \(\varphi \in \mathscr{G}\) such that \(\varphi\sim\vartheta\), then \(\vartheta \in \mathscr{G}\).
		\end{itemize}
	\end{remark}

	\subsection{Proofs of some lemmas}\label{Appendix4}~
	
	
	In this part, we provide some additional proofs omitted from the main text.
	
	\begin{proof}[\bf Proof of Lemma \ref{lemma:AB}]
		
		We first prove part \eqref{AB-1}. Denote \(K = \|f\|_{\Phi, \phi, \omega, B}\) and set the constant \(C = \max\{1, C^{-1}_{dc}\} \max\{1,C_{ic}\} C_{A,B} \geq 1\), noting that \(\Phi\) is a Young function. We then consider two cases.
		
		\begin{itemize}
			\item If \(\omega(A) \leq \omega(B)\), then \( \phi (\omega(A)) \geq C_{dc} \phi (\omega(B))\). Consequently,
			\begin{align*}
				\frac{1}{\omega(A) \phi (\omega(A))} & \int_{A} \Phi \left( \frac{ \left| f \cdot \mathbf{1}_B(x) \right|} {CK} \right) \omega(x) d\mu(x) \leq \frac{1}{\omega(A) \phi(\omega(A))} \cdot \frac{1}{C}\int_B \Phi \left( \frac{|f|}{K} \right) \omega d\mu \\
				& \leq \frac{\omega(B) \phi (\omega(B))}{C \omega(A) \phi (\omega(A))} \leq \frac{\omega(B)} {\omega(A) C_{A,B}} \frac{1}{C_{dc}} \frac{1}{\max\{1,C^{-1}_{dc}\}} \leq 1.
			\end{align*}
			
			\item If \(\omega(B) \leq \omega(A)\), then \(\omega(B) \phi (\omega(B)) \leq C_{ic} \omega(A) \phi(\omega(A))\). Hence,
			\begin{align*}
				\frac{1}{\omega(A) \phi(\omega(A))} & \int_{A} \Phi \left( \frac{\left|f \cdot \mathbf{1}_B(x) \right|} {CK} \right) \omega(x) d\mu(x) \leq \frac{1}{\omega(A) \phi(\omega(A))} \cdot \frac{1}{C} \int_B \Phi \left(\frac{|f|}{K}\right) \omega d\mu \\
				&\leq \frac{\omega(B) \phi(\omega(B))}{C \omega(A) \phi(\omega(A))} \leq \frac{C_{ic}}{\max\{1,C_{ic}\}} \leq 1.
			\end{align*}
		\end{itemize}
		Thus we obtain \(\|f\cdot\mathbf{1}_B\|_{\Phi,\phi,\omega,A} \le C\|f\|_{\Phi,\phi,\omega,B}\), which completes the proof of part \eqref{AB-1}.

		Let \(K' = \|f\|_{\Phi, \phi, \omega, A}\) and let \(K = \|f\|_{\Phi, \phi, \omega, B}\) be as above. Since \(A \subset B\), we obtain
		\[
		\omega(A) \phi(\omega(A)) \leq C_{ic} \omega(B) \phi( \omega(B)) \leq \max\{1, C_{ic}\} \omega(B) \phi(\omega(B)).
		\]
		Set
		\[
		C = \frac{\max\{1,C_{ic}\} \mu(B) \phi(\mu(B))}{\mu(A) \phi(\mu(A))} \geq 1.
		\]
		Then
		\begin{align}\label{ab-1}
			\frac{1}{\omega(B) \phi(\omega(B))} &\int_B \Phi \left( \frac{|f \, \mathbf{1}_A(x)|}{K' \max\{1, C_{ic}\}}\right) \omega(x) d \mu(x) \leq \frac{1}{\omega(B) \phi(\omega(B))} \frac{1}{\max\{1,C_{ic}\}} \int_A \Phi \left( \frac{|f|}{K'} \right) \omega d\mu \nonumber \\
			& \leq \frac{\omega(A) \phi(\omega(A))}{\omega(B) \phi (\omega(B))} \frac{1}{ \max\{1,C_{ic}\}} \leq 1,
		\end{align}
		and
		\begin{align}\label{ab-2}
			\frac{1}{\omega(A) \phi(\omega(A))} \int_{A} \Phi \left( \frac{|f(x)|}{CK} \right) \omega(x) d\mu(x) \leq \frac{1}{C \omega(A) \phi(\omega(A))} \int_{B} \Phi \left( \frac{|f|}{K} \right) \omega d\mu \leq 1.
		\end{align}
		The estimates \eqref{ab-1} and \eqref{ab-2} correspond exactly to part \eqref{AB-2}.
		\end{proof}

	\begin{proof}[\bf Proof of Lemma \ref{PPppw}]
		Fix \(B' \in \mathfrak{B}\). We observe that
		\begin{align}\label{PPppw-1}
			\int_{B'} \Phi & \left( C(\Phi, \Psi)^{-1} \frac{|f(x)|}{\|f\|_{\Psi, \psi, \omega, B'}} \right) \omega(x) \, d \mu(x) \nonumber \\
			& \leq \int_{B'} \Psi \left( \frac{|f(x)|}{\| f \|_{\Psi, \psi, \omega, B'}}\right) \omega(x) \, d \mu(x) \leq \omega(B') \phi (\omega(B')),
		\end{align}
		and
		\begin{align}\label{PPppw-2}
			\int_{B'} \Psi & \left(\frac{|f(x)|}{\max\{1, \, C(\psi, \phi)\} \|f\|_{\Psi, \psi, \omega, B'}} \right) \omega (x) \, d \mu(x)   \nonumber \\
			& \leq \frac{1}{\max\{1, \, C(\psi, \phi)\}} \int_{B'} \Psi \left( \frac{|f(x)|}{\| f \|_{\Psi, \psi, \omega, B'}}\right) \omega(x) \, d \mu(x)    \nonumber \\
			& \leq \frac{\omega(B') \psi (\omega(B'))}{\max\{1, \, C(\psi, \phi)\}} \leq \omega(B') \phi(\omega(B')).
		\end{align}
		The last inequality in \eqref{PPppw-1} and the penultimate inequality in \eqref{PPppw-2} follow from \eqref{remark-2.1}. Since \(B'\) is arbitrary, \eqref{PPppw-1} and \eqref{PPppw-2} imply part \eqref{PPw} and part \eqref{ppw}, respectively.
	\end{proof}

	\subsection{Various definitions of classical Orlicz-Morrey spaces}\label{Appendix5}~
	

	In this part, we mainly supplement the content of the classical Orlicz-Morrey space theory mentioned in Subsection \ref{Section1.1}. We present various types of Orlicz-Morrey spaces that have been studied, together with their historical development and the existing results achieved so far.

	{\bf (1) KK-type Orlicz-Morrey space}\label{Appendix5-1}
	

	The study of Orlicz-Morrey spaces began with Kokilashvili and Krbec \cite{OM-KK-1991}, who in 1991 introduced the first form of Orlicz-Morrey spaces, denoted by \eqref{KK1991} and referred to as KK-type Orlicz-Morrey spaces:
	\begin{align}\label{KK1991}
		\mathfrak{L}^{(\Phi,\lambda)}(\mathbb{R}^n) = \left\{ f \in L^1_{\rm loc}(\mathbb{R}^n) : \|f\|_{\mathfrak{L}^{(\Phi,\lambda)}} < +\infty \right\}.
	\end{align}
	For \(x \in \mathbb{R}^n\), the quantity
	\[
	\|f\|_{\mathfrak{L}^{(\Phi, \lambda)}} = \sup_{r>0} r^{-\lambda} \int_{B(x, r)} \Phi(|f(y)|) \, dy
	\]
	is a quasinorm on \(\mathfrak{L}^{(\Phi,\lambda)}(\mathbb{R}^n)\), where \(0 \le \lambda < n\). In the setting of KK-type Orlicz-Morrey spaces, the boundedness of maximal operators, singular integral operators and their commutators, as well as Littlewood-Paley operators, has been established \cite{OM-KK-1991, OM-KK-1997, OM-KK-2004, OM-KK-2019}.

	{\bf (2) Nakai-type Orlicz-Morrey space}
	

	In 2004, Nakai \cite{OM-Nakai-2004} introduced a second version of Orlicz-Morrey spaces, denoted by \eqref{Nakai2004} and called Nakai-type Orlicz-Morrey spaces:
	\begin{align}\label{Nakai2004}
		L^{(\Phi,\phi)}(\mathbb{R}^n) = \left\{ f \in L^1_{\rm loc}(\mathbb{R}^n) : \|f\|_{L^{(\Phi,\phi)}} < +\infty \right\}.
	\end{align}
	Here the norm is given by
	\[
	\|f\|_{L^{(\Phi,\phi)}} = \sup_{B} \|f\|_{\Phi,\phi,B},
	\]
	where for a ball \(B = B(x,r) \subset \mathbb{R}^n\) with radius \(r\) and volume \(|B|\), one defines
	\[
	\|f\|_{\Phi,\phi,B} = \inf \left\{ \lambda > 0 : \frac{1}{|B|\,\phi(r)} \int_{B} \Phi\!\left(\frac{|f(x)|}{\lambda}\right) dx \le 1 \right\}.
	\]
	In this definition, \(\Phi\) is a Young function and \(\phi \in \mathcal{G}\). Since Nakai-type Orlicz-Morrey spaces have already been discussed in detail in the main body of this paper, we will not repeat the exposition here.

	{\bf (3) SST-type Orlicz-Morrey space}
	
	
	In 2012, Sawano, Sugano, and Tanaka \cite{OM-SST-2012} introduced a third form of Orlicz-Morrey spaces, denoted by \eqref{SST2012} and referred to as SST-type Orlicz-Morrey spaces:
	
	\begin{align}\label{SST2012}
			\mathcal{L}^{\Phi,\varphi}(\mathbb{R}^n) = \left\{ f \in L^1_{\rm loc}(\mathbb{R}^n) : \|f\|_{\mathcal{L}^{\Phi,\varphi}} < +\infty \right\}.
	\end{align}
	The norm is given by
	\[\|f\|_{\mathcal{L}^{\Phi,\varphi}} = \sup_{Q \in \mathscr{Q}} \varphi^{-1}(|Q|)\|f\|_{\Phi,Q}\]
	where
	\[
	\|f\|_{\Phi,Q} = \inf \left\{ \lambda > 0 : \frac{1}{|Q|} \int_{Q} \Phi\!\left(\frac{|f(x)|}{\lambda}\right) dx \le 1 \right\},
	\]
	\(\mathscr{Q}\) denotes the family of all cubes in \(\mathbb{R}^n\) whose sides are parallel to the coordinate axes, and \(|Q|\) is the volume of \(Q\). Here \(\Phi\) is a Young function and \(\phi \in \mathcal{G}\).
		

	The introduction of SST-type Orlicz-Morrey spaces reveals the diversity of this family of function spaces. A natural question then arises: whether there exist connections between different types of such spaces. In 2015, Gala et al. proved that Nakai-type Orlicz-Morrey spaces and SST-type Orlicz-Morrey spaces are isomorphic (see \cite[Theorem 1.6]{OM-SST-2015}). Moreover, many fruitful results have been obtained in the setting of SST-type Orlicz-Morrey spaces. For instance, Hakim et al. \cite{OM-SST-2016} studied fractional maximal operators and certain vector-valued operators on these spaces; Iida \cite{OM-SST-2021} established boundedness results for Orlicz-fractional maximal operators; and Hatano \cite{OM-SST-2025} obtained endpoint estimates for commutators of fractional integral operators.

	{\bf (4) DGS-type Orlicz-Morrey space} 
	
	
	In 2014, Deringoz, Guliyev, and Samko \cite{OM-DGS-2014} introduced a fourth form of Orlicz-Morrey spaces, denoted by \eqref{DGS2014} and called DGS-type Orlicz-Morrey spaces:
	\begin{align}\label{DGS2014}
			\mathcal{M}_{\Phi,\varphi}(\mathbb{R}^n) = \left\{ f \in L^\Phi_{\rm loc}(\mathbb{R}^n) : \|f\|_{\mathcal{M}_{\Phi,\varphi}} < +\infty \right\}.
	\end{align}
	For \(x \in \mathbb{R}^n\), the quantity
	\[
	\|f\|_{\mathcal{M}_{\Phi,\varphi}} = \sup_{r>0} \varphi(x,r)^{-1} \, \Phi^{-1} \left( |B(x,r)|^{-1} \right) \, \|f\|_{L^\Phi(B(x,r))}
	\]
	is a quasi-norm on \(\mathcal{M}_{\Phi,\varphi}(\mathbb{R}^n)\). Here \(L^\Phi\) denotes an Orlicz space, \(\Phi\) is a Young function, and \(\varphi(x,r): \mathbb{R}^n \times (0,\infty) \to (0,\infty)\) is a measurable function. The condition \(f \in L^\Phi_{\rm loc}(\mathbb{R}^n)\) means that for every ball \(B \subset \mathbb{R}^n\), one has \(f \cdot \mathbb{I}_B \in L^\Phi(\mathbb{R}^n)\).


	Among the various types of Orlicz-Morrey spaces, the DGS-type has been the most extensively studied. Boundedness results have been established for many operators on these spaces, including maximal operators \cite{OM-DGS-2014}, fractional maximal operators and their commutators \cite{OM-DGS-2015-2, OM-DGS-2019, OM-DGS-2018}, Calder\'on-Zygmund operators \cite{OM-DGS-2014}, fractional maximal operators and their commutators on spaces of homogeneous type \cite{TSP2020, OM-DGS-2021}, Riesz potentials and their commutators \cite{OM-DGS-2016, OM-DGS-2019}, \(\Phi\)-admissible sublinear integral operators \cite{OM-SST-2014}, as well as intrinsic square operators and their commutators \cite{OM-DGS-2015}.

	{\bf (5) Ho-type Orlicz-Morrey space}
	
	
	In 2023, Ho \cite{OM-H-2023} unified the aforementioned Nakai-type, SST-type, and DGS-type Orlicz-Morrey spaces by introducing a unified formulation, denoted by \eqref{Ho2023} and referred to as Ho-type Orlicz-Morrey spaces:
	\begin{align}\label{Ho2023}
		\mathcal{M}^u_{\Phi,v}(\mathbb{R}^n) = \left\{ f: \|f\|_{\mathcal{M}^u_{\Phi,v}} < +\infty \right\}.
	\end{align}
%
	The norm is given by
	\[\|f\|_{\mathcal{M}^u_{\Phi,v}} = \sup_{B\in\mathcal{B}}\frac{1}{v(B)} \|f\|_{(\Phi,u,B)}\]
	which is a norm on \(\mathcal{M}^u_{\Phi,v}(\mathbb{R}^n)\). In this definition, \(\Phi\) is a Young function, while \(u, v : \mathbb{R}^n \times (0,\infty) \to (0,\infty)\) are Lebesgue-measurable functions. The function \(f\) in \eqref{Ho2023} is a Lebesgue-measurable function, and we write \(v(B(x,r)) = v(x,r)\) for a ball \(B = B(x,r) \in \mathcal{B}\), where \(\mathcal{B}\) denotes the collection of all balls in \(\mathbb{R}^n\). Moreover,
	\begin{align*}
		\|f\|_{(\Phi,u,B)} = \inf\left\{\lambda>0:\,\frac{1}{u(x,r)} \int_{B(x,r)}\Phi\left(\frac{|f(y)}{\lambda}\right)dy\leq 1 \right\}<\infty.
	\end{align*}
	In \cite{OM-H-2023}, Ho analyzed the dual spaces of these Orlicz-Morrey spaces, extended the extrapolation theory to the Orlicz-Morrey setting, and obtained related results such as Agmon-Douglis-Nirenberg estimates for uniformly elliptic equations.

	{\bf (6) Other Orlicz-Morrey spaces}
	

	Depending on the underlying space and whether the measure satisfies a doubling condition, or when the ball \(B\) is taken with its center at the origin under special circumstances, various types of Orlicz-Morrey spaces arise. Examples include central Orlicz-Morrey spaces \cite{COM-2015}, vanishing generalized Orlicz-Morrey spaces \cite{VOM-2014, VOM-2016}, Orlicz-Morrey spaces on spaces of homogeneous type \cite{OM-DGS-2019, TSP2020, OM-DGS-2021}, weighted Orlicz-Morrey spaces \cite{OM-Nakai-2021-2}, and local Orlicz-Morrey spaces \cite{LOM, OM-H-2022}, among others.

	\bigskip
	
	\noindent{\bf\Large Declarations}

	\noindent{\bf Acknowledgements } 
	The authors would like to thank the editors and reviewers for careful reading and valuable comments, which lead to the improvement of this paper.

	\medskip
	\noindent{\bf Mathematics Subject Classification(2020)}
	Primary 42B20; Secondary 42B25; 42B35; 46E30.

	\medskip
	\noindent{\bf Data, materials, and code availability}
	Our manuscript has no associated data, materials, and code.

	\medskip 
	\noindent{\bf Funding information} The research was supported by the National Natural Science Foundation of China (No. 12461021) and the Research Innovation Program for Postgraduates of Xinjiang Uygur Autonomous Region (No. XJ2026G070).

	\medskip
	\noindent{\bf Contributions} All authors participated in the conception and design of the study and reviewed the manuscript.

	\medskip
	\noindent{\bf Conflict of interest} The authors declare that they have no conflict of interest.

	\medskip
	\noindent{\bf Author contributions}
	All authors participated in the conception and design of the study and reviewed the manuscript.

	\medskip
	\noindent{\bf Consent for publication}
	The authors agree to the publication.

	\medskip
	\noindent{\bf Ethical statement}
	{\bf (1) Conflict of Interest:} The authors declare that they have no known competing financial interests or personal relationships that could have appeared to influence the work reported in this paper. {\bf (2) Data Availability:} No new data were generated or analyzed in support of this research. {\bf (3) Authorship and Originality:} This manuscript is the authors' original work and has not been published or submitted elsewhere. All authors have read and approved the final version and agree to be accountable for all aspects of the work.

\end{document}